\documentclass[11pt]{amsart}   	% use "amsart" instead of "article" for AMSLaTeX format
\usepackage{accents}	
\usepackage{adjustbox}
\usepackage{amssymb, latexsym}
\usepackage{amsmath}
\usepackage[toc,page]{appendix}
\usepackage{braket}
\usepackage{breqn}
\usepackage{caption}
\usepackage{color}
\usepackage{comment}
\usepackage{dsfont}
\usepackage{esint}
\usepackage[shortlabels]{enumitem}
\usepackage[T1]{fontenc}
\usepackage{geometry}         
\usepackage{graphicx}
\usepackage{lipsum}
\usepackage{hyperref}
\usepackage{latexsym}
\usepackage{mathrsfs}
\usepackage{mathtools}
\usepackage{subcaption}
\usepackage{float}
\restylefloat{table}
\usepackage{listings}
\usepackage{wasysym}
\usepackage[dvipsnames]{xcolor} %can be used to highlight text

\usepackage{times}
\usepackage{url} %does nice formatting of urls
\usepackage{tikz}
\usetikzlibrary{cd}
\usetikzlibrary{positioning}

\theoremstyle{plain}
\numberwithin{equation}{section}
\newtheorem{thm}{Theorem}[section]
\newtheorem{theorem}[thm]{Theorem}
\newtheorem{lemma}[thm]{Lemma}

\newtheorem{assumption}[thm]{Assumption}

\newtheorem{definition}[thm]{Definition}

\newtheorem{proposition}[thm]{Proposition}

\newtheorem{remark}[thm]{Remark}

\def\al{\alpha}

\def\be{\beta}

\def\de{\delta}

\def\ep{\epsilon}
\def\ga{\gamma}
\def\Ga{\Gamma}

\def\ka{\kappa}

\def\Om{\Omega}

\def\sig{\sigma}

\def\th{\theta}

\def\N{\mathbb{N}}

\def\R{\mathbb{R}}

\def\grad{\nabla}
\def\lang{\langle}

\def\rang{\rangle}

\def\Lin{\mathcal{L}}

\def\exp{\text{exp}}

\newcommand\beq{\begin{equation}}
\newcommand{\bburl}[1]{\textcolor{blue}{\url{#1}}}
\newcommand\eeq{\end{equation}}
\newcommand\bea{\begin{eqnarray}}
\newcommand\eea{\end{xq}}
\newcommand\bi{\begin{itemize}}
\newcommand\ei{\end{itemize}}
\newcommand\ben{\begin{enumerate}}
\newcommand\een{\end{enumerate}}

\newcommand{\LC}[1]{{\color{purple} {\bf Loic:} [#1] }}
\newcommand{\JMS}[1]{{\color{orange} {\bf J. Siktar:} [#1] }}
\newcommand{\GJ}[1]{{\color{red} {\bf Gabriela:} [#1] }}

\everymath{\displaystyle}

\title{A Mountain-Pass Algorithm for Nonlocal Problems with Super-quadratic Nonlinearities}

\author{Loic Cappanera, Gabriela Jaramillo, Joshua M. Siktar}
\date{\today}							% Activate to display a given date or no date

\begin{document}

\begin{abstract}
% \GJ{Titles:

% 1. Multiplicity Results for Solutions to Diffusion Problems with Super-quadratic Nonlinearities

% 2. 
% }

 In this paper we consider a nonlinear equation $-\Lin u(x) = f(x, u(x))$ with a super-quadratic nonlinearity, $f$, and a nonlocal operator, $\Lin$, generated by a special class of radially symmetric $L^1$ convolution kernels with finite second moments. The assumptions on this operator are mild and allow for a variety of kernels used in biological and physical applications, including kernels with algebraic decay and sign changing kernels. 
   
    Using the strong nonlinearities present in the equation, we prove the existence of nontrivial solutions using the classical Mountain Pass Theorem, a central result in minimax theory that equates solutions of our equation to critical points of a corresponding energy functional. This existence result holds with both homogeneous nonlocal Dirichlet and nonlocal Neumann boundary conditions. We supplement these theoretical results with numerical simulations for various nonlinearities with odd maximal degree in the unknown $u$. The numerical scheme exploits the resulting energy landscape which allows one to adapt a gradient descent algorithm.
\end{abstract}

\maketitle

\section{Introduction}

In this paper we consider nonlocal equations of the form
\begin{equation}\label{e:nonlocal_eq}
\begin{array}{c c c c }
-\mathcal{L} u(x) \ & = \ f(x,u) & \quad  x \in & \Omega \subset \mathbb{R}^n \\ %\quad n=2,\\
%u & = 0 & \quad x \in & \Omega^c = \mathbb{R}^n \setminus \Omega
\end{array}
\end{equation}
where the function $f(x,u)$ is nonlinear in $u$ and the map $\mathcal{L}$ is a nonlocal operator of the form
\begin{equation}\label{e:conv_operator}
\mathcal{L} u(x) = \int_{\R^n} (u(y) - u(x) ) \gamma(x,y) \; dy.
\end{equation}
We assume that the convolution kernel,  $\gamma(x,y) = \gamma(|x-y|)$, is a radially symmetric function living in $L^1(\R^n)$ and satisfies appropriate conditions, which we make more precise in what follows. In addition, we take $\Omega$ to be a bounded domain with smooth boundary, $\partial \Omega$, and consider homogeneous nonlocal Dirichlet and Neumann boundary constraints given, respectively, by
\begin{equation}\label{e:dirichlet}
u \ = \ 0 \quad x \in \Omega^c \ = \ \mathbb{R}^n \setminus \Omega, %\quad n =2,
\end{equation}
\begin{equation}\label{e:neumann}
\mathcal{L} u \ = \ 0 \quad x \in \Omega^c \ = \ \mathbb{R}^n \setminus \Omega.
\end{equation}
Our goals here are two-fold: one is to show that \eqref{e:nonlocal_eq} admits non-trivial solutions using the Mountain-Pass Theorem,  and the second is to present an efficient numerical scheme for finding such solutions based on the particular geometry of the energy functional derived from \eqref{e:nonlocal_eq}.

Our motivation for considering equations of the form \eqref{e:nonlocal_eq} comes from biological and physical applications where long-range dispersal events, or nonlocal interactions, give rise to integral operators. For example, various population \cite{cosner2012, hutson2003, lutscher2005}, predator-prey \cite{hao2021, sherratt2016,wu2023}, epidemic \cite{ying2013, yang2015, zhao2018}, and vegetation models \cite{baudena2013, eigentler2018, thompson2009} use integral operators like the one given in \eqref{e:conv_operator} to describe how individuals or plant seeds disperse. Solutions to systems of equations of the form \eqref{e:nonlocal_eq} then correspond to time independent solutions, or steady states, for these models. This type of operator also appears in descriptions of excitable systems, including cardiac and brain tissue. In this case, maps like \eqref{e:conv_operator} are used to describe the effects of tissue inhomogeneities \cite{bueno-orovio2014}, or long-range excitatory and inhibitory interactions between neurons \cite{faye2015, ermentrout2001}. Similarly, models for phase transitions that account for long-range interaction between phases result in Landau-type functionals where the Dirichlet, or gradient, energy is replaced by a convolution \cite{bates2006, du2024}.  Interest in this type of functional then stems from its ability to produce  solutions with sharp interfaces that resemble more closely results seen in applications \cite{burkovska2023, olena2021}. Calculating the $L^2$ gradient flow of this energy then results in a nonlocal Allen-Cahn equation whose steady states then correspond to solutions of \eqref{e:nonlocal_eq}. 

%The corresponding equations for these nonlocal phase field models are derived as the gradient flow of this functional, giving a nonlocal Allen-Cahn equation in the case of $L^2$ gradient flow. Again we have that steady states of this equation then correspond to solutions of \eqref{e:nonlocal_eq}. 
% which are capable of producing

In contrast to other works that model the operator $\mathcal{L}$ as a fractional Laplacian \cite{chammem2023combined, dipierro2017fractional, gu2018infinitely, servadei2012mountain}, the analytical results presented in Sections \ref{s:mountain-pass} and \ref{Sec: Special} are based on integral operators \eqref{e:conv_operator}  generated by non-singular, radially symmetric kernels having finite second moments. In addition, we assume that these operators can be split into the sum of a diffusive operator and a small perturbation (see Section \ref{Sec: Setup} for a precise formulation of these assumptions). In particular, we allow $\gamma$ to take on negative values and consider both thin-tailed (exponential decay) and fat-tailed (algebraic decay) kernels. As a result,  equation \eqref{e:nonlocal_eq} is able to encapsulate models from various applications. For instance, sign changing kernels are relevant in neural field models where, as mentioned above, they describe interactions between neurons. In this case, negative values of $\gamma$ then correspond to inhibitory connections. On the other hand, non-negative kernels arise  naturally in ecological applications \cite{bullock2017}, where the level of decay of $\gamma$ at infinity can distinguish behavior from different plant species and also contribute to different phenomena. Indeed, it has been shown that while kernels with exponential decay can produce population fronts traveling with constant speed, algebraically decaying kernels give rise to accelerating fronts \cite{henderson2018}.

To prove the existence of nontrivial solutions for equation \eqref{e:nonlocal_eq} we adapt the classical Mountain-Pass Theorem to our nonlocal setting. This theorem, originally proven in \cite{rabinowitz1982mountain}, is a central part of minimax theory, as it allows one to prove existence of critical points of an energy functional. These results can then be applied to obtain  existence of non-trivial solutions to the corresponding Euler-Lagrange equations. The  theorem requires energy functionals to possess a particular geometric structure whereby their graph exhibits a "valley" or saddle structure. In particular, the energy functional must be positive for all points at a fixed distance away from the origin, and be negative at some point farther away. The theorem also requires a compactness condition  to hold on sequences of points for which the derivative of the energy functional vanishes in the limit. As we will show, the growth conditions on the nonlinearity $f$ play a crucial role in verifying these assumptions.

While the Mountain-Pass Theorem was originally used to prove multiplicity of solutions for elliptic partial differential equations (see for instance \cite{choi1993semiwave, choi1993mountain, clark1972variant, de2009multiple, degiovanni2022variational,  kajikiya2005critical, rabinowitz1986minimax, struwe2000variational}), more recent applications of the theorem and its generalization, the Linking Theorem, have been extended to equations involving the fractional Laplacian or other nonlocalities \cite{bisci2014mountain, bisci2014three, bisci2016variational, bors2014application, chammem2023combined, dipierro2017fractional, gu2018infinitely, maione2023variational, servadei2012mountain, servadei2013variational}. As in the case of fractional operators, we find that the long range interactions inherent to nonlocal models do not significantly affect the verification of the conditions for the Mountain-Pass Theorem; having proper structure on the underlying function spaces, including suitable continuous embeddings, is more important. 
Because the nonlocal operators considered here have $L^2(\Omega)$ as a natural domain, we do not obtain the aforementioned necessary compactness condition. To overcome this difficulty, we reformulate the operators so that they generate bilinear forms with domain $H^1(\Omega)\times H^1(\Omega)$. To our knowledge, this work is the first to apply minimax theory to such problems.

To accomplish our second goal and obtain an efficient numerical scheme to solve equation \eqref{e:nonlocal_eq}, we follow the approach taken in \cite{bailova2021mountain, bailova2023new,  choi1993semiwave, choi1993mountain}. 
Recasting the problem as finding critical points of an energy, we exploit a stricter geometric assumption (expressed as a condition on the nonlinearity $f(x,u)$) to first find an explicit expression for the maximum of this energy along any radial direction. This allows one to obtain a good first guess for a gradient descent scheme, which then updates the direction of descent (in the ``angular" direction) by solving a nonlocal linear equation. Our results show that the proposed scheme performs better than a standard Newton algorithm, which while converging cannot find a true solution to the nonlocal equation.

We finish this section with a brief outline of our paper. In Section \ref{Sec: Setup} we state our main assumptions for the kernel $\gamma$, describe properties of the operator $\mathcal{L}$ and its corresponding bilinear form, and summarize necessary conditions related to the nonlinear term $f(x,u)$. In Section \ref{s:mountain-pass} we adapt the Mountain-Pass Theorem to our nonlocal setting. In Section \ref{Sec: Special} we provide an alternative proof for existence of non-trivial solutions based on a slightly more restrictive set of assumptions on the nonlinearity $f$ which follows the results from \cite{bailova2021mountain}. This analysis is then the basis for our numerical scheme, which we present in Section \ref{s:numerics}. We conclude the paper with a discussion and summary of future work.

%%%%%%%%

\section{Background and Preliminaries}\label{Sec: Setup}

In this section we consider operators of the form
\begin{equation}\label{e:nonlocal_operator}
\mathcal{L} u \ = \  \int_{\R^n} (u(y) - u(x) ) \gamma(|x-y|) \; dy, \qquad x \in \Omega \subset \R^n,
\end{equation}
and as stated in the introduction, we assume $\Omega$ is a bounded domain with smooth boundary. In addition, we take convolution kernels  $\gamma: \R^n \longrightarrow \R$ that are at minimum symmetric functions in $L^1(\R^n)$. 
Throughout the paper, we will further assume that these maps can be split into the sum of a diffusive operator, $\mathcal{L}_e$, and a small perturbation $\mathcal{L}_s$.
In order to precisely state this assumption, in the next two subsections we first define diffusive operators and then state properties of their corresponding bilinear forms. In Subsection \ref{ss:properties_nonlocal_op} we then characterize what we mean when we say that $\mathcal{L}_s$ is a small perturbation. Examples of the type of operators considered here are listed at the end of this subsection. Finally, Subsection \ref{ss:nonlinearity} gives assumptions on the nonlinearities $f(x,u)$ considered in this paper.

%%%%%%%%%%%%%%%%%%

\subsection{Properties of Nonlocal Diffusive Operators}\label{Subsec: PropL}

Throughout the paper, we make the following assumption on the diffusive operator $\mathcal{L}_e$.

\begin{assumption}\label{a:operatorL}
There is a constant $\xi_0>0$ such that the operator $\mathcal{L}_e $ acting on functions $L^2(\R^n) $  has Fourier symbol $L(\xi) = L(|\xi|)$ which is uniformly bounded and analytic on a strip $| \mathrm{Re}(\xi)| < \xi_0$ of the complex plane. Moreover, the symbol $L(\xi)$ has a Taylor expansion
\[ L(\xi) = - |\xi|^2 + \mathrm{O}(|\xi|^4) \quad \text{as} \quad |\xi| \to 0.\]
\end{assumption}

Assumption \ref{a:operatorL} guarantees that the map $\mathcal{L}_e$ can be written as the composition of an invertible map and the Laplace operator. This result is summarized in the following lemma, which is a consequence of the removable singularity theorem; see \cite[Lemma 3.9]{jaramillo2019effect} for a proof.
\begin{lemma}\label{l:decomposeL}
Let $L(\xi)$ denote the multiplication operator satisfying Assumption \ref{a:operatorL} for some $\xi_0 >0$. Then $L(\xi)$ admits the following decomposition:
\begin{equation}\label{Eq: SpectralMult}
L(\xi) \ = \ M_L(\xi) L_{NF}(\xi) \ = \ L_{NF}(\xi) M_R(\xi),
\end{equation}
where $L_{NF}(\xi) = - |\xi|^2/ (1+ |\xi|^2)$, while $M_{L/R}(\xi)$ and their inverses are analytic and uniformly bounded on $| \mathrm{Re}(\xi)| < \xi_1$ for some $\xi_1 < \xi_0$. 
\end{lemma}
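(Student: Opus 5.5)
The approach is to define the multipliers explicitly as quotients and check their properties by hand. Set
\[ M(\xi) \;=\; \frac{L(\xi)}{L_{NF}(\xi)} \;=\; -\frac{(1+|\xi|^2)\,L(\xi)}{|\xi|^2}. \]
Since both symbols are scalar multiplication operators and commute, we may take $M_L = M_R = M$. The proof then has two parts: show that $M$ is analytic and bounded on a strip, and show that $1/M$ is as well.

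\emph{Analyticity.} Because $L$ is radial and analytic, $L$ is an analytic function of $z = |\xi|^2$ near $z=0$. (Here $|\xi|^2$ is understood as the analytic continuation $\xi\cdot\xi$ on the complex strip.) By Assumption \ref{a:operatorL}, $L(z) = -z + \mathrm{O}(z^2)$, so $L(z)/z$ has a removable singularity at $z=0$ and extends analytically with value $-1$ there. Multiplying by the entire factor $-(1+z)$ shows that $M$ is analytic on $|\mathrm{Re}(\xi)|<\xi_0$, away from zeros of $|\xi|^2$ other than the origin. This caveat matters: on the complex strip $\xi\cdot\xi$ can vanish off the origin, and $1+\xi\cdot\xi$ can vanish when $\mathrm{Im}$ is large. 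I would handle this by working with the strip in the form used in \cite{jaramillo2019effect}, i.e.\ regularity in the real variable with analytic extension in each radial component, and by shrinking to $\xi_1<\xi_0$ so that such spurious zeros are avoided.

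\emph{Boundedness of $M$.} Split the strip into a neighborhood $|\xi|\le R$ and its complement. Near $0$, boundedness follows from the removable singularity. For large $|\xi|$, write $M = -L(\xi)\,(1+|\xi|^2)/|\xi|^2$. Here $L$ is uniformly bounded by assumption and $(1+|\xi|^2)/|\xi|^2 \to 1$, so $M$ is bounded. On the compact middle region, $M$ is bounded by continuity.

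\emph{The inverse $1/M$.} This is the main obstacle. Analyticity and boundedness of $1/M$ require $M$ to be bounded away from zero on the strip, i.e.\ $|L(\xi)| \ge c\,|\xi|^2/(1+|\xi|^2)$. Near $0$ this holds because $M(0)=-1$: by continuity $|M|\ge 1/2$ on a small neighborhood, uniformly for $|\mathrm{Re}|<\xi_1$ after shrinking. Away from $0$, on the real axis, this amounts to $L(\xi)\neq 0$ for $\xi\neq 0$ together with $\liminf_{|\xi|\to\infty}|L(\xi)|>0$. That is precisely the diffusive, invertibility content implicit in the assumption. For a convolution operator, $L(\xi)=\hat\gamma(\xi)-\hat\gamma(0)$, and the Riemann--Lebesgue lemma gives $L\to-\hat\gamma(0)\ne 0$ at infinity, so the tail is controlled.

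Given strict nonvanishing on the real line and a nonzero limit at infinity, a uniform continuity argument extends the lower bound to a thin strip. Concretely, $L$ is analytic and bounded on the wider strip, so by Cauchy estimates its derivative is bounded on a slightly narrower one. This lets us perturb the real-axis lower bound off the axis by a small amount, which fixes the choice of $\xi_1<\xi_0$. On this strip $1/M$ is then analytic and bounded, which completes the decomposition \eqref{Eq: SpectralMult}.
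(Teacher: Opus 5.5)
\textbf{There is a genuine gap in the step for $1/M$.} Your construction $M_L=M_R=M:=L/L_{NF}$, with the removable singularity at the origin, is exactly the argument the paper defers to \cite[Lemma 3.9]{jaramillo2019effect}. Your treatment of the analyticity and boundedness of $M$ itself is fine.

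You correctly reduce invertibility to a lower bound $|L(\xi)|\ge c>0$ for real $|\xi|\ge\delta$. This means $L(\xi)\neq 0$ for real $\xi\neq 0$, together with $\liminf_{|\xi|\to\infty}|L(\xi)|>0$. You then declare this ``implicit in the assumption'', but it is not. Assumption \ref{a:operatorL} only prescribes $L$ near $\xi=0$, plus boundedness and analyticity, and this does not exclude further real zeros. For example, take the radial, Gaussian-decaying, sign-changing kernel $\gamma=g+2\Delta^2 g$ with $\hat g(\xi)=e^{-|\xi|^2}$. Then $\hat\gamma(\xi)=e^{-|\xi|^2}(1+2|\xi|^4)$, so $L(\xi)=\hat\gamma(\xi)-\hat\gamma(0)=-|\xi|^2+\mathrm{O}(|\xi|^4)$ is entire and bounded on every strip $|\mathrm{Im}\,\xi|<\xi_0$. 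Yet $L$ vanishes at the real points where $e^{s}=1+2s^2$ with $s=|\xi|^2$; one root lies in $(1/2,1)$. There $M=0$, so $1/M$ has a pole, and no choice of $\xi_1$ rescues the claim.

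Your tail argument has the analogous hidden requirement $\hat\gamma(0)\neq 0$. For a suitably scaled difference of two Gaussians of equal mass, $L=\hat\gamma\to 0$ at infinity and $1/M$ is unbounded.

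\textbf{How to close it.} The nondegeneracy condition must be added as a hypothesis; the statement as written in the paper is missing it as well. Alternatively, derive it from extra structure. If $\gamma\ge 0$ with $\int\gamma>0$, then $\hat\gamma(\xi)-\hat\gamma(0)=\int\gamma(x)(\cos(\xi\cdot x)-1)\,dx<0$ for real $\xi\neq 0$, and $\hat\gamma(\xi)\to 0$ by Riemann--Lebesgue; this gives both parts. With the paper's normalization $\int\tilde\gamma_e=1$ the tail part is automatic. Nonvanishing on compact sets is not automatic when $\gamma_e$ changes sign, which the paper allows in the Dirichlet case. Once this hypothesis is in place, your Cauchy-estimate perturbation off the real axis does complete the proof.

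\textbf{Minor points.}
\begin{itemize}
\item $M(0)=+1$, not $-1$. This is harmless, since only $|M(0)|=1$ is used.
\item The strip must be a neighborhood of the real axis, $|\mathrm{Im}\,\xi|<\xi_1$, with $\xi_1<1$ to avoid the zeros of $1+|\xi|^2$ at $\pm i$. The ``$\mathrm{Re}$'' in the statement is a typo, and your off-axis argument implicitly uses $\mathrm{Im}$.
\item The ``spurious zeros'' of $\xi\cdot\xi$ need no care, and shrinking the strip would not remove them in $\mathbb{C}^n$, $n\ge 2$. They are harmless because $M$ is an analytic function of $z=|\xi|^2$ near $z=0$. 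In the single complex radial variable, the only zero of $z$ is the origin anyway.
\end{itemize}
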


We use the above results  to make one further assumption on the symbol $L$.

\begin{assumption}\label{a:decomposeL}
Let $L(\xi)$ denote the multiplication operator satisfying Assumption \ref{a:operatorL} and consider its decomposition \eqref{Eq: SpectralMult}, as stated in Lemma \ref{l:decomposeL}. We assume that $M_R(\xi) = M_L(\xi)$ is such that 
 \[ L(\xi) = -|\xi|^2 (1 + L_1(\xi))\]
 with $L_1(\xi)$ satisfying Assumption \ref{a:operatorL} .
\end{assumption}

Notice that Assumption \ref{a:operatorL} implies that we can write the operator $\mathcal{L}_e $ as in \eqref{e:nonlocal_operator},
% \begin{equation}\label{e:integralL}
%  \mathcal{L}u \ = \ d\int_{\R^n} (u(y) - u(x) ) \gamma(|x-y|)\;dy,
%  \end{equation}
%  \JMS{I think this equation is redundant with the previous one, same as $\mathcal{L} u$ from before}
where the convolution kernel $\gamma(|x-y|) = \gamma(z)$  is such that:
\begin{enumerate}[label=\textbf{P\arabic*}]
\item\label{Q1} $\gamma: \R_+ \longrightarrow \R$ is exponentially decaying,
\item\label{Q2} $\int_{\mathbb{R}^n} | \gamma(|x|)| \; dx = 1$,
\item \label{Q3}$\int_{\mathbb{R}^n} x^2 \gamma(|x|) \; dx \neq 0$.
\end{enumerate}
However,  this is not an equivalence relation. The above conditions on $\gamma$ are not sufficient to obtain an operator $\mathcal{L}_e$ satisfying assumptions \ref{a:decomposeL}.

For illustrative purposes, we give examples of Fourier symbols satisfying the above assumptions. We also give an explicit expression for the associated convolution kernel $\gamma(z)$.

{\bf Example 1:} 
\[ L(\xi) = \frac{1}{b^2} \left[ -1 + \frac{1}{1+ b^2 |\xi|^2} \right] = \frac{-|\xi|^2}{1 +b^2  |\xi|^2} = -|\xi|^2 \left[ 1 + \underbrace{\left( \frac{b^2(1+|\xi|^2)}{1+b^2 |\xi|^2} \right) \cdot   \frac{-|\xi|^2}{1 + |\xi|^2} }_{L_1(\xi)} \right]. \]
Tying this symbol back to the definition of the operator $\mathcal{L}$ given in \eqref{e:nonlocal_operator} we see that the corresponding convolution kernel $\gamma(z)$, which has Fourier symbol $\hat{\gamma}(\xi) = (1+b^2|\xi|^2)^{-1}$, is the Green's function for the  operator $( \mathrm{Id} -b^2 \Delta)$, where $b\in \R$. Then,
\begin{itemize}
\item $\gamma(z) = \frac{1}{2b} \exp(-|z|/b)$ in dimension $n=1$, and
%\JMS{I don't think it's specified what $b$ is?}
\item $\gamma(z) = \frac{1}{b\pi^2} K_0(|z|/b)$  in dimension $n=2$, where $K_0$ is the modified Bessel function of degree zero.
\end{itemize}

{\bf Example 2:}
\[ L(\xi) =  -1 + \exp(-b^2 |\xi|^2/4)  = -\frac{b^2}{4} |\xi|^2 \left[ 1+  \underbrace{\frac{-1 + \frac{b^2}{4} |\xi|^2 + \exp(-b^2 |\xi|^2/4)}{ -\frac{b^2}{4} |\xi|^2}}_{L_1(\xi)} \right] \]
In this case, the convolution kernel that gives rise to this Fourier symbol is
\begin{itemize}
\item $\gamma(z) = \frac{1}{\pi^{n/2} b^n} \exp( -|z|^2/b^2)  $, with $b>0$ and for all dimensions $n$.
\end{itemize}

%%%%%%%%%%%%%%%%%%%%

\subsection{Bilinear Forms of Nonlocal Diffusive Operators and their Properties}\label{Subsec: BilinearFormB}
We start by considering the nonlocal Dirichlet problem
\begin{equation}\label{e:nonlocal_D}
\begin{split}
-\mathcal{L}_e u & = f(x,u) \qquad  x \in  \Omega \subset \mathbb{R}^n,\\
u & = 0 \hspace{1.75cm} x \in \Omega^c = \R^n \setminus \Omega.
\end{split}
\end{equation}
where $\mathcal{L}_e$, as in Subsection \ref{Subsec: PropL}, represents a nonlocal operator of diffusive type. For convenience, we drop the subscript $e$ in the rest of this subsection. We view the associated bilinear form $B_D[\cdot, \cdot]$, defined below, as a functional with domain $X^1_D  \times X^1_D $ where 
\[ X^1_D = \{ u \in H^1(\Omega) \mid \quad u =0, \quad x \in \Omega^c\}.\]
It is straightforward to check that this space is equivalent to $H^1(\Omega)$ and so we equip it with the $H^1$ norm
\[ \|u\|_{H^1(\Omega)} = \| u\|_{L^2(\Omega)} + \| \nabla u\|_{L^2(\Omega)}.\]
We then have the following definition.

\begin{definition}\label{d:bilinearform}
Let $\gamma: \R^n \longrightarrow \R$ be such that the associated integral operator $\mathcal{L}$, given by 
\eqref{e:nonlocal_operator}, satisfies Assumptions \ref{a:operatorL} and \ref{a:decomposeL}. 
Define then the bilinear form $B_D: X^1_D \times X^1_D \longrightarrow \R$ by the expression
\[ B_D[u,\phi] = \frac{1}{2} \iint_{\R^n\times \R^n }  (u(y) - u(x)) \gamma(|x-y|) (\phi(y) -\phi(x)) \;dy\;dx.\]
\end{definition}

It is easy to see that the operator $\Lin$ and the kernel $\ga$ adhere to the  nonlocal Green's identity%\JMS{Loic and Gabriela: here is where I decided to add the NL Green's Identity, but feel free to move}
\begin{equation}\label{Eq: NLGreen}
    \int_{\R^n}\Lin u(x) v(x) dx \ = \ -\frac{1}{2}\iint_{\R^{2n}}(u(y) - u(x))\ga(x, y)(v(y) - v(x))dydx 
\end{equation}
for any $u, v \in L^2(\R^n)$. Moreover, thanks to our definition for the space $X^1_D$ and the radial symmetry of the kernel $\gamma$, we may write
\begin{equation}\label{Eq:BilinearToL_D}
B_D[u,\phi] \ = \ (-\mathcal{L}u, \phi)_{L^2(\Omega)} \ = \ (-\mathcal{L}u, \phi)_{L^2(\R^n)}.
\end{equation}
The bilinear form (which is clearly symmetric) also satisfies the following properties.

\begin{proposition}\label{p:bilinearform1}
Let $B_D: X^1_D \times X^1_D \longrightarrow \R $ be defined as in Definition \ref{d:bilinearform} , then there exist positive constants $C_i$ and $\beta_i$, $i =1,2,$ such that
\begin{enumerate}[label=\textbf{D\arabic*}]
\item\label{D1} $ B_D[u,\phi] \leq C_1 \| u\|_{L^2(\Omega)} \| \phi\|_{L^2(\Omega)}$
\item\label{D2} $ B_D[u,u] \geq \beta_1 \| u\|^2_{L^2(\Omega)}$
\item\label{D3} $ B_D[u,\phi] \leq C_2 \| u\|_{H^1(\Omega)} \| \phi\|_{H^1(\Omega)}$
\item\label{D4} $ B_D[u,u] \geq \beta_2 \| u\|^2_{H^1(\Omega)}$
\end{enumerate}
for all $u, \phi \in X^1_D$.
\end{proposition}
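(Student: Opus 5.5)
The plan is to pass everything to Fourier space, where $B_D$ is diagonal. For $u,\phi\in X^1_D$ we extend by zero, which is consistent with the constraint $u=0$ on $\Omega^c$, and regard $u,\phi$ as elements of $L^2(\R^n)$ supported in $\overline\Omega$. By the nonlocal Green's identity \eqref{Eq: NLGreen}, \eqref{Eq:BilinearToL_D} and Plancherel,
\[ B_D[u,\phi] = (-\mathcal{L}u,\phi)_{L^2(\R^n)} = \int_{\R^n} -L(\xi)\,\hat u(\xi)\,\overline{\hat\phi(\xi)}\,d\xi , \]
up to the normalising constant of the Fourier transform. Here $L$ is real and radial because $\gamma$ is real and radially symmetric.

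\textbf{Properties \ref{D1} and \ref{D3}.} By Assumption \ref{a:operatorL}, $L$ is uniformly bounded, so Cauchy--Schwarz and Plancherel give \ref{D1} with $C_1=\sup|L|$. Since $\|\cdot\|_{L^2(\Omega)}\le\|\cdot\|_{H^1(\Omega)}$, \ref{D3} follows from \ref{D1} with $C_2=C_1$.

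\textbf{Property \ref{D2}.} I would use Lemma \ref{l:decomposeL} together with Assumption \ref{a:decomposeL} to write
\[ -L(\xi)=\frac{|\xi|^2}{1+|\xi|^2}\,M(\xi), \qquad M(\xi)=(1+|\xi|^2)\bigl(1+L_1(\xi)\bigr). \]
The factor $M$ is continuous and real on $\R^n$, its inverse is bounded so $|M|\ge m>0$, and $M(0)=1$ by the Taylor expansion in Assumption \ref{a:operatorL}. Since $\R^n$ is connected and $M$ never vanishes, continuity gives $M\ge m$ everywhere. Hence
\[ B_D[u,u]\ge m\int_{\R^n}\frac{|\xi|^2}{1+|\xi|^2}|\hat u|^2\,d\xi . \]
The weight degenerates only at $\xi=0$, and this is controlled through the bounded support of $u$. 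Indeed $|\hat u(\xi)|\le c\|u\|_{L^1}\le c|\Omega|^{1/2}\|u\|_{L^2}$, so
\[ \int_{|\xi|<\delta}|\hat u|^2\,d\xi\le c'\,\delta^n|\Omega|\,\|u\|_{L^2}^2 . \]
We choose $\delta$ so that this is at most $\tfrac12\|\hat u\|_{L^2}^2$. On $|\xi|\ge\delta$ the weight is at least $\delta^2/(1+\delta^2)$, which gives \ref{D2} with $\beta_1=\tfrac{m}{2}\,\delta^2/(1+\delta^2)$, after adjusting for the Plancherel constant.

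\textbf{Property \ref{D4}: the main obstacle.} I expect \ref{D4} to be the genuine difficulty, and I suspect it cannot hold as literally stated. Because $L$ is bounded, \ref{D1} gives $B_D[u,u]\le C_1\|u\|_{L^2}^2$. Taking a sequence in $X^1_D$ with $\|u_k\|_{L^2}=1$ and $\|\nabla u_k\|_{L^2}\to\infty$, for example oscillating bump functions, would then contradict \ref{D4}. This is consistent with the introduction's remark that the natural domain of these operators is $L^2(\Omega)$.

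So my first attempt would be to check whether the intended statement uses a reformulated form, as the introduction suggests. The natural candidate is the form obtained by factoring through $L_{NF}$ and $M_{L/R}$ so that a genuine $|\xi|^2$ weight appears, namely
\[ \tilde B[u,u]=\int |\xi|^2\bigl(1+L_1(\xi)\bigr)|\hat u|^2\,d\xi . \]
For this form the argument above goes through with the weight $|\xi|^2$ instead of $|\xi|^2/(1+|\xi|^2)$, giving $\tilde B[u,u]\ge m'\|\nabla u\|_{L^2}^2$. Combining this with a Poincaré inequality for functions vanishing outside $\Omega$ (the same low-frequency argument) would yield $\beta_2$. If the form is not reformulated, I would record that \ref{D4} requires an unbounded symbol, and would prove only \ref{D1}--\ref{D3} for $B_D$ as defined.
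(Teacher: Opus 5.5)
Your proofs of \ref{D1}--\ref{D3} are correct, and they take a more self-contained route than the paper.

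\begin{itemize}
\item The paper obtains \ref{D1}--\ref{D2} by citing \cite{cappanera2024analysis}, where the form is posed on $V_D\times V_D$. It obtains \ref{D3} from the gradient decomposition \eqref{Eq: DecomposeL1}.
\item You get \ref{D1} directly from $\sup|L|<\infty$, and \ref{D3} from \ref{D1}.
\item You get \ref{D2} by combining the low-frequency bound $|\hat u|\le c\,|\Omega|^{1/2}\|u\|_{L^2}$ with $M\ge m>0$ from Lemma \ref{l:decomposeL}.
\end{itemize}
This makes explicit that $L^2$-coercivity rests on exactly two facts: the invertibility of $M_{L/R}$ and the bounded support of $u$.

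You are also right that \ref{D4} is false, so no proof can exist. Since \ref{D1} holds, \ref{D4} would give $\beta_2\|u\|^2_{H^1(\Omega)}\le C_1\|u\|^2_{L^2(\Omega)}$ on $X^1_D\supset C^\infty_c(\Omega)$. Oscillating bumps violate this. The paper's argument breaks at \eqref{Eq: DecomposeL1}. Assumption \ref{a:decomposeL} gives the symbol $-|\xi|^2-|\xi|^2L_1(\xi)$, so $\mathcal{L}=\Delta+\Delta\mathcal{L}_1$, not $\Delta-\Delta\mathcal{L}_1$ as the paper writes. With the correct sign,
\[ B_D[u,u] \ = \ \|\nabla u\|^2_{L^2(\Omega)} - B_1[\nabla u,\nabla u]. \]
Coercivity of $B_1$ then yields only the upper bound $(1-\tilde\beta_1)\|\nabla u\|^2_{L^2(\Omega)}$, and \eqref{Eq: BDUB} does not follow.

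Where your proposal goes wrong is the suggested repair. The form $\tilde B[u,u]=\int|\xi|^2(1+L_1(\xi))|\hat u|^2\,d\xi$ is not a reformulation. By Assumption \ref{a:decomposeL}, $|\xi|^2(1+L_1(\xi))=-L(\xi)$, so $\tilde B$ is literally $B_D$. Your claimed bound $\tilde B[u,u]\ge m'\|\nabla u\|^2_{L^2}$ would need $\inf_\xi(1+L_1(\xi))>0$. But boundedness of $L$ forces $1+L_1(\xi)=-L(\xi)/|\xi|^2=O(|\xi|^{-2})$; in Example 1, $1+L_1(\xi)=(1+b^2|\xi|^2)^{-1}$.

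An $H^1$-coercive form needs a symbol that grows like $|\xi|^2$. One example is $-(1+|\xi|^2)L(\xi)=|\xi|^2M(\xi)$, the form of $(1-\Delta)(-\mathcal{L})$. Its critical points, however, solve a different equation, not $-\mathcal{L}u=f$. So your fallback is the correct conclusion: for $B_D$ as defined, only \ref{D1}--\ref{D3} hold. This also undermines every later use of $\beta_2$ in Section \ref{s:mountain-pass}.
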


\begin{proof}
The properties \ref{D1} and \ref{D2} follow from the results in \cite{cappanera2024analysis}, which consider the same bilinear form but with domain $V_D \times V_D$, where
 \begin{equation}\label{Eq: VDDef}
 V_D \ = \ \{ u \in L^2(\R^n) \mid \quad u =0, \quad x \in \Omega^c\}.
 \end{equation}

To prove the properties \ref{D3} and \ref{D4} we use Assumption \ref{a:decomposeL} to write
\begin{equation}\label{Eq: DecomposeL1}
\begin{aligned}
B_D[u,\phi] = & - (\mathcal{L}u, \phi)_{L^2(\R^n)}\\
= & - (\Delta u - \Delta \mathcal{L}_1u, \phi)_{L^2(\R^n)}\\
= & \int_{\R^n} \nabla u \cdot \nabla \phi \;dx   - \int_{\R^n} \nabla \mathcal{L}_1 u \cdot \nabla \phi \;dx\\
= &   \int_{\R^n} \nabla u \cdot \nabla \phi \;dx - \int_{\R^n}  \mathcal{L}_1 \nabla u \cdot \nabla \phi \;dx\\
= &   \int_{\Omega} \nabla u \cdot \nabla \phi \;dx + B_1[ \nabla u , \nabla \phi ] .
\end{aligned}
\end{equation}
Here the bilinear form $B_1: V_D \times V_D \longrightarrow \R$ is generated by the operator $\mathcal{L}_1$ with Fourier symbol $L_1(\xi)$ satisfying Assumption \ref{a:operatorL}. Applying the results from  \cite[Lemma 2.3]{cappanera2024analysis} shows that $B_1$ is coercive and bounded. Hence,
\begin{equation}\label{Eq: BDLB}
\begin{aligned}
B_D[u,\phi] & \leq  \| \nabla u\|_{L^2(\Omega)}  \| \nabla \phi \|_{L^2(\Omega)}  + \tilde{C}_1 \| \nabla u\|_{L^2(\Omega)}  \| \nabla \phi \|_{L^2(\Omega)} \\
& \leq  (1+ \tilde{C}_1)   \| \nabla u\|_{L^2(\Omega)}  \| \nabla \phi \|_{L^2(\Omega)}. 
\end{aligned}
\end{equation}

Similarly,
\begin{equation}\label{Eq: BDUB}
\begin{aligned}
B_D[u,\phi] \ & \geq \ \| \nabla u\|^2_{L^2(\Omega)} + \tilde{\beta}_1 \| \nabla u\|^2_{L^2(\Omega)}  \\
& \ \geq \ (1+ \tilde{\beta}_1)   \| \nabla u\|^2_{L^2(\Omega)}. 
\end{aligned}
\end{equation}
The results of the lemma then follow by adding \eqref{Eq: BDLB} and \eqref{Eq: BDUB} with those from \ref{D1} and \ref{D2}.

\end{proof}

Next, we consider the Neumann problem,
\begin{equation}\label{e:nonlocal_N}
\begin{split}
-\mathcal{L} u \ & = \ f(x,u) \qquad  x \in  \Omega \subset \mathbb{R}^n\\
-\mathcal{L}u \ & = \ 0 \hspace{1.75cm} x \in \Omega^c = \R^n \setminus \Omega.
\end{split}
\end{equation}
To define the corresponding bilinear form we use the space
\[ X^k_N = \{ u \in H^k(\R^n) : \mathcal{L}u = 0, \quad  x \in \Omega^c\}\]
equipped with the norm,
$\|u\|_{X^k_N} = \| u\|_{H^k(\R^n)} + |u|_{B_N}$, where
\[|u|^2_{B_N}  = \frac{1}{2} \iint_{\R^n \times \R^n} (u(y) -u(x))^2 \gamma(x,y) \;dx\;dy.\]
\begin{remark}\label{r:propertyX_N}
Notice that because the operator $\mathcal{L}$ is a convolution operator, given $\mathcal{L}u =0$ on $\Omega^c$, we also obtain that $\mathcal{L} \nabla u =  \nabla \mathcal{L}u =  0$ on $\Omega^c$. 
\end{remark}

The following Lemma, which follows from the analysis presented in \cite{olena2021}, shows that space $X^0_N$ is equivalent to $L^2(\Omega)$. 

\begin{lemma}\label{l:equivalent_norms}
Let $\gamma: \R^n \longrightarrow (0,\infty)$ be such that it satisfies conditions \ref{Q1}, \ref{Q2}, and \ref{Q3}.
Then, there exist positive constants $c$ and $C$ such that for any $u \in X^0_N$,
\[ c\|u\|_{L^2(\Omega)} \ \leq \ \| u\|_{X^0_N} \ \leq \ C\|u\|_{L^2(\Omega)}.\]
\end{lemma}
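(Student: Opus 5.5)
The plan is to prove the two inequalities separately. The harder one is the bound of $\|u\|_{L^2(\R^n)}$ by $\|u\|_{L^2(\Omega)}$, which is where the constraint $\mathcal{L}u=0$ on $\Omega^c$ must enter.

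Here $X^0_N$ has norm $\|u\|_{L^2(\R^n)}+|u|_{B_N}$, and $\gamma>0$ with $\int\gamma=1$ by \ref{Q2}.

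\textbf{Lower bound.} This is immediate, since $\|u\|_{L^2(\Omega)}\le \|u\|_{L^2(\R^n)}\le \|u\|_{X^0_N}$, so we may take $c=1$.

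\textbf{Seminorm bounded by the full $L^2$ norm.} Expanding the square and using Young's inequality for convolutions gives
\[ |u|_{B_N}^2 \le \iint \big(u(x)^2+u(y)^2\big)\gamma(|x-y|)\,dx\,dy = 2\|\gamma\|_{L^1}\|u\|^2_{L^2(\R^n)} = 2\|u\|^2_{L^2(\R^n)}. \]
Hence it suffices to show $\|u\|_{L^2(\R^n)}\le C\|u\|_{L^2(\Omega)}$.

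\textbf{Exterior estimate.} With $\int\gamma=1$, the condition $\mathcal{L}u=0$ on $\Omega^c$ reads $u(x)=(\gamma*u)(x)$ for $x\in\Omega^c$. Write $u=u_1+u_2$, where $u_1=u\chi_\Omega$ and $u_2=u\chi_{\Omega^c}$. On $\Omega^c$ this becomes
\[ u_2 = \chi_{\Omega^c}(\gamma*u_2) + \chi_{\Omega^c}(\gamma*u_1), \qquad\text{that is,}\qquad (I-K)u_2=g, \]
where $K v=\chi_{\Omega^c}(\gamma*v)$ acts on $L^2(\Omega^c)$ and $g=\chi_{\Omega^c}(\gamma*u_1)$. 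Young's inequality gives $\|g\|_{L^2}\le\|u\|_{L^2(\Omega)}$. The remaining task is to show that $I-K$ is boundedly invertible on $L^2(\Omega^c)$, or at least injective with a bounded inverse on its range. Then $\|u_2\|_{L^2}\le C\|g\|_{L^2}\le C\|u\|_{L^2(\Omega)}$, which completes the proof.

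\textbf{Main obstacle: $\|K\|\le 1$ but not obviously $<1$.} Since $\hat\gamma(0)=1$, $K$ is not a strict contraction on the unbounded set $\Omega^c$, because functions supported far from $\Omega$ almost see the full convolution. I would handle this in three steps, following the analysis of \cite{olena2021}.

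First, I would establish an energy inequality. Pair the equation on $\Omega^c$ with $u_2$ and use the nonlocal Green's identity \eqref{Eq: NLGreen} restricted to the exterior. This yields the weighted deficit
\[ \int_{\Omega^c} u_2^2 \Big(1-\int_{\Omega^c}\gamma(|x-y|)\,dy\Big)dx \]
together with a nonnegative double integral, all controlled by $\langle g,u_2\rangle$. The deficit weight is positive near $\partial\Omega$ and decays away from it.

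Second, I would propagate control outward. The exponential decay \ref{Q1} and positivity of $\gamma$ allow a chain, or discrete Poincaré-type, argument through layers of width comparable to the kernel range. Positivity and \ref{Q3} give a nondegenerate spread of mass, so $u_2$ on each layer is controlled by $u_2$ on the previous layer plus the local energy.

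Third, I would close the far-field gap. To obtain a uniform $L^2$ bound on the unbounded far region, I would pass to the Fourier side. There $1-\hat\gamma(\xi)\sim|\xi|^2$ near $0$, by \ref{Q3}. Hence $u$ restricted to the far field is a function that is nearly $\gamma$-harmonic and lies in $L^2$. The strategy is to combine this with the near-field control and argue, by contradiction and compactness, that a sequence with $\|u_n\|_{L^2(\Omega)}\to0$ and $\|u_n\|_{L^2(\R^n)}=1$ would converge to a nonzero $L^2$ solution of $u=\gamma*u$ on $\Omega^c$ with $u=0$ on $\Omega$. Such a function would satisfy a Liouville-type vanishing, which is the contradiction.

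I expect this last compactness/Liouville step to be the crux, since mass can escape to infinity. I would control that escape using the exterior energy identity.
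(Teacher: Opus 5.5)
Your preliminary steps are correct and match the paper's reduction. The constant $c=1$ is trivial, $|u|_{B_N}^2\le 2\|u\|_{L^2(\R^n)}^2$ holds, and everything comes down to the exterior bound $\|u\|_{L^2(\Omega^c)}\le C\|u\|_{L^2(\Omega)}$, which the paper takes from Lemma \ref{l:exteriorproblem}.

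\textbf{The gap.} That exterior bound is the whole content of the lemma, and your proposal does not prove it. Worse, the target you set is false: $I-K$ is injective but has no bounded inverse on its range. Take $\phi\in C_c^\infty(B_1)$ with $\|\phi\|_{L^2}=1$, and let $v_R(x)=R^{-n/2}\phi((x-x_R)/R)$ with $v_R$ supported in $\Omega^c$. Then $(I-K)v_R=\chi_{\Omega^c}(v_R-\gamma*v_R)$, and
$\|v_R-\gamma*v_R\|_{L^2}^2=(2\pi)^{-n}\int(1-\hat\gamma(\eta/R))^2|\hat\phi(\eta)|^2\,d\eta=O(R^{-4})$.
So $\|(I-K)v_R\|\to0$ while $\|v_R\|=1$. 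No estimate holds for general right-hand sides; the proof must use that $g$ is generated by $u_1$ and that $u_2\in L^2$.

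Your three steps do not supply this.
\begin{itemize}
\item Steps 1--2 give only weighted, local control, because the deficit weight $\int_\Omega\gamma(|x-y|)\,dy$ decays away from $\Omega$.
\item In Step 3 the weak limit is automatically $0$ by the energy identity, so the ``Liouville'' step is not the crux. The real difficulty is excluding escape of mass to infinity.
\item The exterior energy identity cannot exclude that escape. It only gives $B_N[u_k,u_k]\le 2\|u_k\|_{L^2(\Omega)}\|u_k\|_{L^2(\R^n)}\to0$, i.e.\ vanishing energy, which the bumps $v_R$ also have.
\end{itemize}

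\textbf{The missing idea.} Work with $h:=\Lin u$, which for $u\in X^0_N$ is supported in $\bar\Omega$.
\begin{itemize}
\item Because $h$ has compact support, $\hat h$ is smooth with $|\partial^\alpha\hat h|\le C_\alpha\|h\|_{L^2(\Omega)}$.
\item We have $\hat u=-\hat h/(1-\hat\gamma)$, with $c\min\{|\xi|^2,1\}\le 1-\hat\gamma(\xi)\le\min\{C|\xi|^2,2\}$. This uses $\gamma>0$, \ref{Q1}, \ref{Q3} and Riemann--Lebesgue.
\item Since $\hat u\in L^2$, one must have $\hat h(0)=0$ when $n\le 4$, and additionally $\nabla\hat h(0)=0$ when $n\le 2$. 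Taylor's formula then gives $\|u\|_{L^2(\R^n)}\le C\|h\|_{L^2}$.
\item Finally, $0\le 1-\hat\gamma\le 2$ gives $\|h\|_{L^2}^2\le 2B_N[u,u]=-2(h,u)_{L^2(\Omega)}\le 2\|h\|_{L^2}\|u\|_{L^2(\Omega)}$.
\end{itemize}
Hence $\|u\|_{L^2(\R^n)}\le 2C\|u\|_{L^2(\Omega)}$. This closes the argument without any compactness, and it also yields Lemma \ref{l:Poincare}.

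You cannot instead fill the gap by importing the paper's Lemma \ref{l:exteriorproblem}. Its Lax--Milgram argument comes from \cite{cappanera2024analysis}, where kernels are compactly supported. It needs $B_N[v,v]\ge c\|v\|_{L^2(\Omega^c)}^2$ for $v$ vanishing on $\Omega$, and the bumps $v_R$ violate exactly this when $\Omega^c$ is unbounded.
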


A proof can be found in Section \ref{Subsec: posKer}. It is based on the fact that the exterior problem
\begin{equation}\label{e:exterior_problem}
\begin{split}
\mathcal{L}u \ =& \ 0 \quad x \in \Omega^c\\
u \ = & \ g \ \quad x \in \Omega,
\end{split}
\end{equation}
where $g \in L^2(\Omega)$, has a unique weak solution. This can be interpreted as saying that the condition $\mathcal{L} u=0$ uniquely extends the values of $u$ to the exterior domain $\Omega^c$.

Having defined the required spaces we now consider the following bilinear form.

\begin{definition}\label{d:bilinearform_N}
Let $\gamma: \R^n \longrightarrow (0,\infty)$ be such that the associated integral operator $\mathcal{L}$, given by 
\eqref{e:nonlocal_operator}, satisfies Assumptions \ref{a:operatorL} and \ref{a:decomposeL}. 
Define then the bilinear form $B: X^1_N \times X^1_N \longrightarrow \R$ by the expression
\[ B_N[u,\phi] \ = \ \frac{1}{2} \iint_{\R^n \times \R^n}  (u(y) - u(x)) \gamma(|x-y|) (\phi(y) -\phi(x)) \;dy\;dx.\]
\end{definition}

The definition of the space $X^1_N$ and  the radial symmetry of the kernel $\gamma$, guarantee that
\begin{equation}\label{Eq:BilinearToL_N}
B_N[u,\phi] \ = \ (-\mathcal{L}u, \phi)_{L^2(\Omega)} \ = \ (-\mathcal{L}u, \phi)_{L^2(\R^n)} 
\end{equation}
We also readily obtain that $B_N$ is symmetric.

The next proposition shows that the above bilinear form is bounded and coercive. A version of this result 
can be found in \cite{cappanera2024analysis}, where the base space is taken to be $X^0_N$ and the convolution kernel $\gamma(\cdot)$ is assumed to have compact support and finite second moment. The results from this section show that these same properties can be extended to bilinear forms defined by nonlocal operators satisfying assumptions \ref{a:operatorL} and \ref{a:decomposeL}, and which in addition have a strictly positive kernel. 

\begin{proposition}\label{p:bilinear_N}
Let $B_N: X^1_N \times X^1_N \longrightarrow \R $ be given as in Definition\ref{d:bilinearform_N}. Then, there exist positive constants $C_i$ and $\beta_i$, $i=1,2$, such that
\begin{enumerate}[label=\textbf{N\arabic*}]
\item\label{N1} $ B_N[u,\phi] \leq C_1 \| u\|_{L^2(\Omega)} \| \phi\|_{L^2(\Omega)}$
\item\label{N2} $ B_N[u,u] \geq \beta_1 \| u\|^2_{L^2(\Omega)}$
\item\label{N3} $ B_N[u,\phi] \leq C_2 \| u\|_{H^1(\Omega)} \| \phi\|_{H^1(\Omega)}$
\item\label{N4} $ B_N[u,u] \geq \beta_2 \| u\|^2_{H^1(\Omega)}$
\end{enumerate}
for all $u, \phi \in X^1_N$.
\end{proposition}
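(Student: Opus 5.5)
The plan is to follow the proof of Proposition \ref{p:bilinearform1}, splitting the four estimates into an $L^2$ pair (\ref{N1}, \ref{N2}) and an $H^1$ pair (\ref{N3}, \ref{N4}). Lemma \ref{l:equivalent_norms} is used to move between norms on $\R^n$ and norms on $\Omega$. The guiding fact is that for $u \in X^k_N$ the exterior values are slaved to the interior values through the exterior problem \eqref{e:exterior_problem}, so every global quantity should be controlled by its restriction to $\Omega$.

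\textbf{The $L^2$ estimates.} For \ref{N1}, I will use \eqref{Eq:BilinearToL_N} to write $B_N[u,\phi] = (-\Lin u, \phi)_{L^2(\Omega)}$. Since $\gamma \in L^1$ with $\int|\gamma| = 1$ by \ref{Q2}, Young's inequality gives $\|\Lin u\|_{L^2(\R^n)} \le 2\|u\|_{L^2(\R^n)}$. Lemma \ref{l:equivalent_norms} then gives $\|u\|_{L^2(\R^n)} \le \|u\|_{X^0_N} \le C\|u\|_{L^2(\Omega)}$, and \ref{N1} follows. For \ref{N2}, I note that $B_N[u,u] = |u|^2_{B_N}$ is nonnegative because $\gamma > 0$. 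I would then invoke the coercivity result of \cite{cappanera2024analysis}, or the argument behind Lemma \ref{l:equivalent_norms} (uniqueness for \eqref{e:exterior_problem} together with a compactness/contradiction argument), to get $|u|_{B_N} \gtrsim \|u\|_{L^2(\Omega)}$ on $X^0_N$.

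I have to be careful here. Constants satisfy $\Lin c = 0$ everywhere, so coercivity on the full space can only hold once constants are excluded, for example through a mean-zero normalization or through the decay that membership in $H^k(\R^n)$ forces. Because $X^1_N \subset H^1(\R^n)$ and a nonzero constant is not in $L^2(\R^n)$, constants are in fact excluded. I will make this explicit in the contradiction argument: a sequence with $\|u_k\|_{L^2(\Omega)} = 1$ and $|u_k|_{B_N} \to 0$ should converge to a function with $\Lin u = 0$ on all of $\R^n$. By the Fourier symbol, such a function has $\hat u$ supported at $\xi = 0$, so it is $0$ in $L^2$, which is a contradiction.

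\textbf{The $H^1$ estimates.} I will repeat the computation \eqref{Eq: DecomposeL1} with Assumption \ref{a:decomposeL}, obtaining
\[ B_N[u,\phi] = \int_{\R^n} \nabla u\cdot\nabla\phi \,dx + B_1[\nabla u,\nabla\phi], \]
where $B_1$ is generated by $\Lin_1$. By Remark \ref{r:propertyX_N}, $\Lin\nabla u = 0$ on $\Omega^c$, so each component of $\nabla u$ lies in $X^0_N$. Lemma \ref{l:equivalent_norms} therefore gives $\|\nabla u\|_{L^2(\R^n)} \le C\|\nabla u\|_{L^2(\Omega)}$, which lets me replace integrals over $\R^n$ by quantities on $\Omega$. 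Boundedness and coercivity of $B_1$ from \cite[Lemma 2.3]{cappanera2024analysis} then give
\[ B_N[u,\phi] \le C\|\nabla u\|_{L^2(\Omega)}\|\nabla\phi\|_{L^2(\Omega)} \quad\text{and}\quad B_N[u,u] \ge \tilde\beta\|\nabla u\|^2_{L^2(\Omega)}. \]
Adding these to \ref{N1} and \ref{N2} yields \ref{N3} and \ref{N4}.

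\textbf{Main obstacle.} The step I expect to be hardest is justifying that $B_1$ is coercive on $\nabla u$ in the Neumann setting. The decomposition only controls $\int_{\R^n}|\nabla u|^2$ up to the perturbation $B_1$, so the lower bound needs $1+\tilde\beta_1>0$, meaning $\Lin_1$ must be small relative to the identity. I would try to get this from the symbol directly: since $L(\xi)$ has the sign of $-|\xi|^2$, the factor $1+L_1(\xi)$ is positive and bounded below on the real line, so Plancherel gives $\int |\xi|^2(1+L_1)|\hat u|^2 \ge c\|\nabla u\|^2_{L^2(\R^n)}$. This bound holds on $\R^n$ and does not depend on the boundary condition.
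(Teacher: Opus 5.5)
\textbf{The failing step.} Your resolution of the main obstacle is where the argument breaks: $1+L_1(\xi)$ is positive but \emph{not} bounded below. Because $\gamma\in L^1$, the symbol $L(\xi)=\hat\gamma(\xi)-\hat\gamma(0)$ is bounded; this is the same fact behind your Young-inequality proof of \ref{N1}. Hence $1+L_1(\xi)=-L(\xi)/|\xi|^2=\mathrm{O}(|\xi|^{-2})$ as $|\xi|\to\infty$, and for Example 1 with $b=1$ it equals $(1+|\xi|^2)^{-1}$ exactly. Plancherel therefore gives only $B_N[u,u]=\int_{\R^n}(-L(\xi))|\hat u(\xi)|^2\,d\xi\le 2\|u\|^2_{L^2(\R^n)}$, with no control of $\nabla u$.

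No other argument can rescue this, because \ref{N4} contradicts \ref{N1}: together they would force $\beta_2\|u\|^2_{H^1(\Omega)}\le C_1\|u\|^2_{L^2(\Omega)}$ on $X^1_N$. Here is a concrete counterexample. Let $n=1$ and $\gamma(z)=\tfrac{1}{2}e^{-|z|}$, the Green's function of $1-\partial_x^2$. Take $0\neq\phi\in C_c^\infty(\Omega)$, $\psi_m=m^{-2}\phi(x)\sin(mx)$ and $u_m=\psi_m-\psi_m''$. Then $\gamma\ast u_m=\psi_m$, so $\Lin u_m=\psi_m''$ vanishes on $\Omega^c$. Since $u_m\in C_c^\infty(\Omega)$, it lies in $X^1_N$ (and in $X^1_D$). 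Moreover $B_N[u_m,u_m]=\|\psi_m'\|^2_{L^2}+\|\psi_m''\|^2_{L^2}\to\tfrac{1}{2}\|\phi\|^2_{L^2}$ and $\|u_m\|_{L^2(\Omega)}$ stays bounded, while $\|u_m'\|_{L^2(\Omega)}\sim m$. So \ref{N4} (and likewise \ref{D4}) is false for these $L^1$ kernels. \ref{N3} is harmless, since it follows from \ref{N1} and $\|\cdot\|_{L^2(\Omega)}\le\|\cdot\|_{H^1(\Omega)}$.

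\textbf{Comparison with the paper.} Your outline otherwise mirrors the paper's proof, and the paper's argument for \ref{N4} has the same defect, concealed by a sign slip. Under Assumption \ref{a:decomposeL} one has $\Lin=\Delta+\Delta\Lin_1$, so $B_N[u,u]=\|\nabla u\|^2_{L^2(\R^n)}-B_1[\nabla u,\nabla u]$ with $B_1[v,v]=(-\Lin_1 v,v)$. The paper instead writes $-(\Delta u-\Delta\Lin_1u,\phi)$ and obtains $+B_1$. With the correct sign, coercivity of $B_1$ yields an upper bound, not the claimed $(1+\tilde\beta_1)\|\nabla u\|^2_{L^2(\Omega)}$. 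You were right that this is the crux, but the correct conclusion is that $H^1$-coercivity fails, so the statement itself must be weakened.

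\textbf{A separate gap in \ref{N2}.} Your contradiction sketch skips the essential step. A zero weak limit contradicts $\|u_k\|_{L^2(\Omega)}=1$ only if the convergence is strong, and an $L^1$ kernel supplies no compactness for free. Supplying that step is exactly the job of Lemma \ref{l:Poincare}, which the paper combines with Lemma \ref{l:equivalent_norms}; cite it rather than the bare Fourier argument.
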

\begin{proof}
 We first look at $B_N: X^0_N \times X^0_N \longrightarrow \R$.
It is straightforward to show that there is a constant $C_1>0$ such that,
\[ B_N[u,\phi] \leq C_1 \| u\|_{L^2(\Omega)} \| \phi\|_{L^2(\Omega)}.\]
To prove the coercivity of $B_N$ we use Lemma \ref{l:Poincare}, shown below, which establishes the Poincar{\'e} inequality
\[ \|u\|^2_{X^0_N} \leq \beta B_N[u,u].\]
Property \ref{N2} then follows from Lemma \ref{l:equivalent_norms} which shows that the norms $\| \cdot\|_{X^0_N}$ and $\| \cdot\|_{L^2(\Omega)}$ are equivalent.

As in Proposition \ref{p:bilinearform1} we use Assumption \ref{a:decomposeL} and write
\begin{equation}\label{Eq: DecomposeL1_2}
\begin{aligned}
B_N[u,\phi] = & - (\mathcal{L}u, \phi)_{L^2(\R^n)}\\
= & - (\Delta u - \Delta \mathcal{L}_1u, \phi)_{L^2(\R^n)}\\
= &   \int_{\Omega} \nabla u \cdot \nabla \phi \;dx + B_1[ \nabla u , \nabla \phi ]. 
\end{aligned}
\end{equation}
Because the operator $\mathcal{L}_1$ satisfies hypothesis \ref{a:operatorL}, by our previous argument the corresponding bilinear form $B_1: X^0_N \times X^0_N \longrightarrow \R$ is bounded and coercive. Since by Remark \ref{r:propertyX_N},  $\nabla u$ is also in the  space $X^0_N$, we find that
 \begin{equation*}
\begin{aligned}
B_N[u,\phi] & \leq  \| \nabla u\|_{L^2(\Omega)}  \| \nabla \phi \|_{L^2(\Omega)}  + \tilde{C}_1 \| \nabla u\|_{L^2(\Omega)}  \| \nabla \phi \|_{L^2(\Omega)} \\
& \leq  (1+ \tilde{C}_1)   \| \nabla u\|_{L^2(\Omega)}  \| \nabla \phi \|_{L^2(\Omega)},
\end{aligned}
\end{equation*}
and
\begin{equation*}
\begin{aligned}
B_N[u,\phi] \ & \geq \ \| \nabla u\|^2_{L^2(\Omega)} + \tilde{\beta}_1 \| \nabla u\|^2_{L^2(\Omega)}  \\
& \ \geq \ (1+ \tilde{\beta}_1)   \| \nabla u\|^2_{L^2(\Omega)}.
\end{aligned}
\end{equation*}
The properties \ref{N3} and \ref{N4} then follow.
\end{proof}

The next lemma establishes a Poincar{\'e} inequality for the bilinear form $B_N: X^0_N \times X^0_N \longrightarrow \R$.
A proof of this result can be found in Section \ref{Subsec: posKer}. A similar result can be found in \cite{mengesha2013analysis} for bilinear forms defined by compactly supported convolution kernels $\gamma$.

\begin{lemma}\label{l:Poincare}
Consider the bilinear form $B_N: X^0_N \times X^0_N \longrightarrow \R$ as given by Definition  \ref{d:bilinearform_N}. Then, there exists a positive constant $\beta$, such that
\[ \| u\|^2_{L^2(\R^n)} \ \leq \ \beta B_N[u,u].\]
\end{lemma}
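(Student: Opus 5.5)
The plan is to argue by contradiction and compactness. I combine the Fourier representation of $B_N$ on $L^2(\R^n)$ with the exterior-extension estimate of Lemma~\ref{l:equivalent_norms}. The latter says that on $X^0_N$ the full norm is controlled by the norm on $\Omega$, $\|u\|_{L^2(\R^n)}\le \|u\|_{X^0_N}\le C\|u\|_{L^2(\Omega)}$.

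First I would write the form in frequency variables. For $u\in L^2(\R^n)$ the nonlocal Green's identity \eqref{Eq: NLGreen} and Plancherel give
\[
B_N[u,u] \ = \ (-\mathcal{L}u,u)_{L^2(\R^n)} \ = \ \int_{\R^n} \bigl(1-\hat\gamma(\xi)\bigr)\,|\hat u(\xi)|^2\,d\xi .
\]
Here I use $\int\gamma=1$ by \ref{Q2} and $\gamma>0$. Since $\gamma\in L^1$ is positive and radial, $|\hat\gamma(\xi)|<1$ for $\xi\neq 0$ and $\hat\gamma(\xi)\to0$ at infinity. Assumption~\ref{a:operatorL} gives $1-\hat\gamma(\xi)=|\xi|^2+\mathrm{O}(|\xi|^4)$ near the origin. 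Hence there is $c>0$ with
\[
1-\hat\gamma(\xi)\ \ge\ c\min\{|\xi|^2,1\}.
\]
This alone cannot give the inequality on all of $L^2(\R^n)$, since low frequencies make $B_N$ small. So the constraint $\mathcal{L}u=0$ on $\Omega^c$ must be used, and it enters only through Lemma~\ref{l:equivalent_norms}.

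Next, suppose the inequality fails. Then there are $u_k\in X^0_N$ with $\|u_k\|_{L^2(\R^n)}=1$ and $B_N[u_k,u_k]\to0$. Fix $\delta>0$ and split $u_k=P_\delta u_k+Q_\delta u_k$ with Fourier projections onto $|\xi|<\delta$ and $|\xi|\ge\delta$. The lower bound on the symbol gives
\[
\|Q_\delta u_k\|^2_{L^2}\le \bigl(c\min\{\delta^2,1\}\bigr)^{-1}B_N[u_k,u_k]\to0 .
\]
For the low-frequency part I estimate pointwise,
\[
\|P_\delta u_k\|_{L^\infty}\le (2\pi)^{-n/2}\int_{|\xi|<\delta}|\hat u_k|\,d\xi\le C\delta^{n/2}\|u_k\|_{L^2}=C\delta^{n/2}.
\]
Hence $\|P_\delta u_k\|_{L^2(\Omega)}\le C|\Omega|^{1/2}\delta^{n/2}$. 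Since $\Omega$ is bounded, this is small uniformly in $k$ once $\delta$ is small. Choosing $\delta$ first and then $k$ large yields $\|u_k\|_{L^2(\Omega)}\to0$. This step is where the boundedness of $\Omega$ replaces the missing compactness. It also shows the argument is effectively quantitative, so one could write the constant explicitly instead of arguing by contradiction.

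Finally I invoke Lemma~\ref{l:equivalent_norms}, which applies because $u_k\in X^0_N$. It gives $1=\|u_k\|_{L^2(\R^n)}\le C\|u_k\|_{L^2(\Omega)}\to0$, a contradiction, and this yields $\beta$.

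The main obstacle is this last ingredient, the bound $\|u\|_{L^2(\Omega^c)}\le C\|u\|_{L^2(\Omega)}$ for functions satisfying $u=\gamma*u$ on $\Omega^c$. That is the unique solvability of the exterior problem \eqref{e:exterior_problem} with a uniform bound. I am taking it from Lemma~\ref{l:equivalent_norms}. If it had to be proved here, I would write the exterior equation as $u-\chi_{\Omega^c}(\gamma*(\chi_{\Omega^c}u))=\chi_{\Omega^c}(\gamma*(\chi_\Omega g))$. I would then try to show that the restricted convolution $\chi_{\Omega^c}\gamma*\chi_{\Omega^c}$ has no eigenvalue $1$ in $L^2(\Omega^c)$. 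The natural route is the strict inequality $|\hat\gamma|<1$ off the origin combined with the positivity of $\gamma$.
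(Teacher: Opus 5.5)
Your argument is correct, given the exterior bound you import, but it takes a genuinely different route from the paper.

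\textbf{How the two routes differ.} The paper's proof (Lemma~\ref{Lem: AppendixPoincare}) is a weak-compactness argument in $L^2(\R^n)$. It extracts $u_k\rightharpoonup\bar u$ and shows $B_N[\bar u,\bar u]=0$, so $\bar u$ is constant (this uses $\gamma>0$) and hence zero. It then tries to upgrade to strong convergence by asserting $\gamma*u_k\to0$ in $L^2(\R^n)$ and using $-B_N[u_k,u_k]=\int u_k(\gamma*u_k)\,dx-\|u_k\|^2_{L^2(\R^n)}$. That argument never uses $\mathcal{L}u_k=0$ on $\Omega^c$. Its last step is exactly where compactness fails on $\R^n$: the dilations $u_k(x)=k^{-n/2}\phi(x/k)$ satisfy everything it uses, yet $\|\gamma*u_k\|_{L^2}\to\|\phi\|_{L^2}$.

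Your frequency splitting isolates this obstruction. High frequencies are controlled by the symbol bound, and low frequencies are uniformly small on the bounded set $\Omega$. You then bring in the nonlocal Neumann constraint through the exterior bound. What you gain is a quantitative estimate valid on all of $L^2(\R^n)$, namely $\|u\|_{L^2(\Omega)}\le C|\Omega|^{1/2}\delta^{n/2}\|u\|_{L^2(\R^n)}+(c\delta^2)^{-1/2}B_N[u,u]^{1/2}$ for $0<\delta\le1$, and hence an explicit $\beta$.

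Your reduction is also sharp. For $u\in X^0_N$ one has $B_N[u,u]=\int_\Omega(-\mathcal{L}u)u\,dx\le 2\|u\|_{L^2(\R^n)}\|u\|_{L^2(\Omega)}$, so the lemma itself forces $\|u\|_{L^2(\R^n)}\le 2\beta\|u\|_{L^2(\Omega)}$. The lemma is therefore equivalent to the exterior bound on $X^0_N$, and it cannot be proved without using the constraint in this way.

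\textbf{Two caveats.}

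First, as written your citation is circular. The paper states that Lemma~\ref{l:equivalent_normsN}, the appendix form of Lemma~\ref{l:equivalent_norms}, is a consequence of \eqref{e:poincare_2} itself. Cite Lemma~\ref{l:exteriorproblem} directly instead. For $u\in X^0_N$ set $g=u|_\Omega$; then $\hat u=u-\tilde g\in Y_N$ satisfies \eqref{e:weak_exterior}. By uniqueness, $\|u\|_{L^2(\Omega^c)}\le C\|u\|_{L^2(\Omega)}$, which is all you actually use. Only this a priori bound on $X^0_N$ is needed, not solvability for every $g$.

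Second, your fallback plan for proving that bound would not suffice. Excluding the eigenvalue $1$ for $T=\chi_{\Omega^c}\,\gamma*(\chi_{\Omega^c}\,\cdot)$ gives only uniqueness. Because $\Omega^c$ is unbounded, dilated bumps supported in $\Omega^c$ make $\langle (I-T)v,v\rangle=B_N[\tilde v,\tilde v]$ arbitrarily small relative to $\|v\|^2_{L^2}$, with $\tilde v$ the zero extension of $v$. So $I-T$ has no bounded inverse. The same examples show $B_N$ is not coercive on $Y_N$ in the norm $\|\cdot\|_{L^2(\Omega^c)}+|\cdot|_B$, so this ingredient is genuinely delicate. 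A uniform bound on $X^0_N$ has to exploit the special form of the right-hand side $\chi_{\Omega^c}(\gamma*\tilde g)$.
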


%%%%%%%%%%%%%%

\subsection{Properties of our Nonlocal Operators} \label{ss:properties_nonlocal_op}
We are now ready to state our main assumptions for the operator $\mathcal{L}$, defined by the expression \eqref{e:nonlocal_operator} together with Dirichlet \eqref{e:nonlocal_D} or Neumann \eqref{e:nonlocal_N} boundary conditions.
We state these in terms of the convolution kernel $\gamma$. 

%\JMS{I think these conditions should be somehow connected to \ref{Q1}-\ref{Q3}, even if simply to say these are additional assumptions. LC: I think this is now done below (in red by Gabriela).}
\begin{assumption}\label{a:gamma}
We assume that $\gamma(z)$ is a radial function in $L^1(\R^n)$, with average one and finite second moment.
In addition, we suppose that $\gamma$ can be written as
\[ \gamma \ = \ \gamma_e + \gamma_s,\]
with 
\[ \Gamma_e  \ = \ \int_{\R^n} \gamma_e(z) \;dz, \qquad  \Gamma_s \ = \ \int_{\R^n} \gamma_s(z) \;dz,  \]
 and where
\begin{itemize}
\item $\gamma_e(z) $ generates a nonlocal operator of diffusive type, i.e., $\mathcal{L}_e: L^2(\R^n) \longrightarrow L^2(\R^n)$,
\[ \mathcal{L}_e u \ = \ \Gamma_e \int_{\R^n} (u(y) -u(x)) \tilde{\gamma}_e(|x-y|) \;dy, \qquad  \tilde{\gamma}_e = \gamma_e/ \Gamma_e \]
whose Fourier symbol satisfies assumptions \ref{a:operatorL} and \ref{a:decomposeL}; as a consequence, $\gamma_e$, and so $\gamma_s$, is in $L^1(\R^n)$.

\item $\gamma_s(z) $ is small in the sense that it is either strictly negative, or 
 $$\|\gamma_s\|_{L^1(\R)} - \Gamma_s \ < \ \beta \ = \ \min\{ \beta_0,\beta_1\},$$
 where $\beta_k>0$ is the largest number such that
\[ \beta_k \| u\|^2_{X^k} \ \leq \ (-\mathcal{L}_e u, u)_{L^2(\Omega)} \ = \ B_e[u,u],\]
for all $u \in X^k$. In this case,  $B_e : X \times X \longrightarrow \R$ is associated bilinear form and the space $X= X^k_{D/N}$  accounts for the corresponding boundary conditions and the necessary regularity $k=0,1$. 

\item When considering nonlocal Neumann boundary constraints, we further assume that $\gamma_e(z) $ is strictly positive.
\end{itemize}

\end{assumption}

The above assumptions on $\gamma$ guarantee that  bilinear forms, $B_{D/N}$, given as in Definitions \ref{d:bilinearform} and \ref{d:bilinearform_N},
are bounded and coercive in a space of functions with sufficient regularity. This allows us to satisfy the compactness conditions, \ref{Q1}-\ref{Q3}, of the Mountain-Pass Theorem.  
Indeed, when $\gamma_s =0$, the boundedness and coercivity follow from  Propositions \ref{p:bilinearform1} and \ref{p:bilinear_N}.
To extend the result to the case when $\gamma_s \neq 0$, notice that because $\gamma_s \in L^1(\R)$, we may conclude that these bilinear forms are bounded. If $\gamma_s$ is not strictly negative, the coercivity is a consequence of first writing
\begin{align*}
 \mathcal{L} u \ = & \  \gamma \ast u - u \\
\mathcal{L} u \ = & \ \Gamma_e ( \tilde{\gamma}_e \ast u - u) + \gamma_s \ast u - \Gamma_s u,
 \end{align*}
and the next set of inequalities:
\begin{align*}
\beta \| u\|^2_{L^2(\Omega)} \ \leq & \ B_e[u,u]\\
\beta \| u\|^2_{L^2(\Omega)}  \ \leq & \ \underbrace{ B_e[u,u]  -  (\gamma_s \ast u, u)_{L^2(\Omega)} +\Gamma_s (u,u)_{L^2(\Omega)} }_{=B_{D/N}[u,u]}\\
& + (\gamma_s \ast u, u)_{L^2(\Omega)} - \Gamma_s (u,u)_{L^2(\Omega)}\\
\beta  \| u\|^2_{L^2(\Omega)}  \ \leq & \ B_{D/N} [u,u] +(  \| \gamma_s\|_{L^1(\R)} - \Gamma_s )  \| u\|^2_{L^2(\Omega)}  \\
(\beta + \Gamma_s - \| \gamma_s\|_{L^1(\R)} )   \| u\|^2_{L^2(\Omega)}  \ \leq & \ B_{D/N} [u,u].
\end{align*}
The assumptions on $\gamma_s$ then allows us to conclude that $\beta + \Gamma_s - \| \gamma_s\|_{L^1(\R)}  >0$.
A similar argument also gives us
\[ (\beta +\Gamma_s  - \| \gamma_s\|_{L^1(\R)} )   \| u\|^2_{H^1 (\Omega)}   \leq  B_{D/N}[u,u],\]
from which we conclude that $B_{D/N}: X_{D/N} \times X_{D/N} \longrightarrow \R$ is coercive. 

If on the other hand the kernel $\gamma_s(x)<0$ for all $x$, we then can write
\begin{align*}
\mathcal{L} u \ & = \ \gamma \ast u - u \\
 & = \Gamma_e(  \tilde{\gamma}_e \ast u - u)  -  |\Gamma_s|( \tilde{\gamma}_s \ast u - u)
 \end{align*}
where as before $\tilde{\gamma}_e  = \gamma_e / \Gamma_e $ and $\tilde{\gamma}_s =  \gamma_s/ \Gamma_s$.
Since $\tilde{\gamma}_s$ is now strictly positive, the results from \cite[Lemmas 2.3 and 2.5]{cappanera2024analysis} show that the bilinear form, $B_s: X^0_{D/N} \times X^0_{D/N} \longrightarrow \R$ defined by
\[ B_s[u,u] = |\Gamma_s| ( \tilde{\gamma}_s \ast u - u, u)_{L^2(\Omega)} \]
is coercive. As a result,
\begin{align*}
\beta_e \| u\|^2_{X} & \ \leq  \ B_e[u,u]\\
\beta_e \| u\|^2_{X}  & \ \leq \ \underbrace{ B_e[u,u] +B_s[u,u] }_{=B_{D/N}[u,u]} - B_s[u,u]\\
\beta_e \| u\|^2_{X}  \ \leq \ & B_{D/N}[u,u] - \beta_s \| u\|^2_{L^2(\Omega)}\\
\beta_e \| u\|^2_{X}  \ \leq \ & B_{D/N}[u,u] 
  \end{align*}
where $X = L^2(\Omega)$ or $ H^1(\Omega)$. We summarize these result in the next proposition.

\begin{proposition}\label{p:bilinear_general}
Let $\gamma \in L^1(\R^n)$ satisfy Assumption  \ref{a:gamma} and consider the bilinear form  $B_{D/N}: X^1_{D/N} \times X^1_{D/N} \longrightarrow \R $ defined by
\[ B_{D/N}[u,\phi] \ = \ \frac{1}{2} \iint_{\R^n\times \R^n }  (u(y) - u(x)) \gamma(|x-y|) (\phi(y) -\phi(x)) \;dy\;dx.\]
Then there exist positive constants $C_i$ and $\beta_i$, $i =1,2,$ such that
\begin{enumerate}[label=\textbf{T\arabic*}]
\item\label{T1} $ B_{D/N}[u,\phi] \leq C_1 \| u\|_{L^2(\Omega)} \| \phi\|_{L^2(\Omega)}$
\item\label{T2} $ B_{D/N}[u,u] \geq \beta_1 \| u\|^2_{L^2(\Omega)}$
\item\label{T3} $ B_{D/N}[u,\phi] \leq C_2 \| u\|_{H^1(\Omega)} \| \phi\|_{H^1(\Omega)}$
\item\label{T4} $ B_{D/N}[u,u] \geq \beta_2 \| u\|^2_{H^1(\Omega)}$
\end{enumerate}
for all $u, \phi \in X^1_{D/N}$.
\end{proposition}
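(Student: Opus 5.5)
The plan is to split the bilinear form along the decomposition $\gamma = \gamma_e + \gamma_s$ of Assumption \ref{a:gamma}. We write $B_{D/N}[u,\phi] = B_e[u,\phi] + B_s[u,\phi]$, where $B_e$ is generated by $\gamma_e$ (equivalently by $\Gamma_e\tilde{\gamma}_e$) and
\[ B_s[u,\phi] = -(\gamma_s\ast u,\phi)_{L^2(\R^n)} + \Gamma_s (u,\phi)_{L^2(\R^n)}. \]
For $B_e$, Propositions \ref{p:bilinearform1} and \ref{p:bilinear_N} give the four bounds \ref{D1}--\ref{D4} and \ref{N1}--\ref{N4}, up to the multiplicative constant $\Gamma_e$. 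This uses that $\tilde\gamma_e$ satisfies Assumptions \ref{a:operatorL} and \ref{a:decomposeL}, and, in the Neumann case, that it is strictly positive. So it remains to control the perturbation $B_s$ in the $L^2$ and $H^1$ norms.

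\textbf{Upper bounds \ref{T1} and \ref{T3}.} By Young's inequality, $\|\gamma_s\ast u\|_{L^2}\le \|\gamma_s\|_{L^1}\|u\|_{L^2}$, so
\[ |B_s[u,\phi]| \le (\|\gamma_s\|_{L^1}+|\Gamma_s|)\,\|u\|_{L^2}\|\phi\|_{L^2}. \]
In the Dirichlet case the $L^2(\R^n)$ norms equal the $L^2(\Omega)$ norms. In the Neumann case we pass from $L^2(\R^n)$ to $L^2(\Omega)$ using Lemma \ref{l:equivalent_norms}. Adding this to the bound for $B_e$ gives \ref{T1}. Since the $L^2(\Omega)$ norm is dominated by the $H^1(\Omega)$ norm, \ref{T3} follows in the same way.

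\textbf{Lower bounds \ref{T2} and \ref{T4}.} We split into the two cases allowed by Assumption \ref{a:gamma}.

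\emph{Case 1: $\gamma_s$ is not strictly negative.} Young's inequality gives
\[ B_s[u,u] \ge -(\|\gamma_s\|_{L^1}-\Gamma_s)\|u\|_{L^2}^2. \]
Combining this with $B_e[u,u]\ge \beta\|u\|_X^2$ (the definition of $\beta_k$) and $\|u\|_{L^2(\Omega)}\le\|u\|_{H^1(\Omega)}$, we obtain
\[ B_{D/N}[u,u]\ge(\beta+\Gamma_s-\|\gamma_s\|_{L^1})\|u\|_X^2 \quad\text{for } X=L^2(\Omega),\,H^1(\Omega). \]
The smallness hypothesis makes this constant positive.

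\emph{Case 2: $\gamma_s<0$.} Here $B_s[u,u]$ equals $|\Gamma_s|$ times the form generated by the positive, normalized kernel $\tilde\gamma_s$, with a sign we need to track carefully. If that sign makes $B_s[u,u]$ nonnegative, then $B_{D/N}[u,u]\ge B_e[u,u]$ and coercivity is inherited directly from $B_e$.

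\textbf{Main obstacle.} I expect the main difficulty to be exactly this sign and norm bookkeeping in the Neumann case. The hypothesis $\mathcal{L}u=0$ on $\Omega^c$ refers to the full kernel $\gamma$, not to $\gamma_e$ alone, so it is not automatic that $u\in X^1_N$ lies in the space on which $B_e$ is known to be coercive, nor that $\nabla u$ satisfies the hypotheses needed to apply the decomposition \eqref{Eq: DecomposeL1_2}. My first approach would be to define $\beta_k$ directly over $X^k_{D/N}$, as Assumption \ref{a:gamma} does, so that only the perturbation estimates above are needed. I would then invoke Lemma \ref{l:equivalent_norms} and Remark \ref{r:propertyX_N} to move between norms on $\R^n$ and on $\Omega$.
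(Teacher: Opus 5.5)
Your upper bounds and Case~1 match the paper's argument. Case~2, however, has a genuine gap: the sign you left open is the unfavorable one. Since $\Gamma_e+\Gamma_s=1$ and $\gamma_s$ is radial,
\[ B_s[u,u] \ = \ \Gamma_s\|u\|^2_{L^2(\R^n)}-(\gamma_s\ast u,u)_{L^2(\R^n)} \ = \ \frac12\iint_{\R^n\times\R^n}(u(y)-u(x))^2\,\gamma_s(|x-y|)\;dy\;dx , \]
which is $\le 0$ when $\gamma_s<0$. Hence $B_{D/N}[u,u]\le B_e[u,u]$, and no coercivity is inherited from $B_e$. The paper's own treatment of this case does not close the gap either. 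It declares $|\Gamma_s|(\tilde\gamma_s\ast u-u,u)$ coercive, but this quantity equals $-|\Gamma_s|$ times the nonnegative form generated by $\tilde\gamma_s$. So its step $-B_s[u,u]\le-\beta_s\|u\|^2_{L^2(\Omega)}$ fails for the same reason.

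In fact \ref{T2} and \ref{T4} can fail under the bare hypothesis $\gamma_s<0$. Work in $n=1$ with Dirichlet constraints and take $\gamma=(1+M)\tilde\gamma_e-M\tilde\gamma_s$. Here $\tilde\gamma_e,\tilde\gamma_s$ are normalized Gaussians with $\hat{\tilde\gamma}_e(\xi)=e^{-\epsilon^2\xi^2/4}$, $\epsilon=2/\sqrt{1+M}$, and $\hat{\tilde\gamma}_s(\xi)=e^{-\ell^2\xi^2/4}$. Then $\mathcal{L}_e$ has symbol $-\xi^2+\mathrm{O}(\xi^4)$, and $1-\hat\gamma(\xi)\le\xi^2-M(1-e^{-\ell^2\xi^2/4})$. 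So for a fixed nonzero $u\in C^\infty_c(\Omega)\subset X^1_D$,
\[ B_D[u,u] \ \le \ \|u'\|^2_{L^2}-\frac{M}{2\pi}\int_{\R}\bigl(1-e^{-\ell^2\xi^2/4}\bigr)|\hat u(\xi)|^2\,d\xi \ \longrightarrow \ \|u'\|^2_{L^2}-M\|u\|^2_{L^2} \quad (\ell\to\infty), \]
which is negative once $M>\|u'\|^2_{L^2}/\|u\|^2_{L^2}$ and $\ell$ is large.

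What your Case~1 estimate actually shows does not depend on the sign of $\gamma_s$: for any $\gamma_s\in L^1$, $B_s[u,u]\ge-(\|\gamma_s\|_{L^1}-\Gamma_s)\|u\|^2_{L^2}$. For $\gamma_s<0$ this reads $-2|\Gamma_s|\,\|u\|^2_{L^2}$. So your argument goes through once the smallness condition $\|\gamma_s\|_{L^1}-\Gamma_s<\beta$ is also imposed when $\gamma_s<0$; the "strictly negative" alternative of Assumption \ref{a:gamma} cannot stand on its own. For Neumann constraints this lower bound is in $\|u\|_{L^2(\R^n)}$. So either $\beta$ must be measured in a norm dominating it, such as $\|\cdot\|_{X^k_N}$, or the smallness condition must absorb the equivalence constant from Lemma \ref{l:equivalent_norms}.
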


\begin{remark}
In general it is difficult to find the value of the coercivity constant, $\beta$, associated with a bilinear form.
In the case of the nonlocal operators studied in Section \ref{Subsec: PropL} this constant corresponds to the smallest eigenvalue of the operator, which can be approximated as follows. The Fourier Transform, $L_e(\xi)$, of the operator corresponds to the curve of the continuous spectrum of $\mathcal{L}_e:L^2(\R^n) \longrightarrow L^2(\R^n)$. When we restrict our problem to a bounded domain, the eigenvalues of the operator will land on this curve. The smallest eigenvalue would then be close to the origin, $ \xi =0$, where by assumption the Fourier symbol has an expansion
\[ L_e(\xi) = -\alpha |\xi|^2 + \mathrm{O}(|\xi|^4)\]
with $\alpha $ being the second moment of the kernel $\gamma_e$. It then follows that
\[ \beta \sim \alpha \frac{1}{d(\Omega)^2} ,\]
where $d(\Omega)$ represents the diameter of the domain.
\end{remark}

%%%%%%%%%%%%%%%%%%

\subsection{Kernel Examples}\label{Subsec: kernelEx}

Now we discuss examples of convolution kernels $\gamma$ satisfying Assumption \ref{a:gamma}.

\noindent{\bf Example 1:} Logistic kernel
\[ \gamma(x ) = \frac{1}{\Gamma} (1 + (|x| /a)^b)^{-1 }, \qquad \Gamma = \int_{\R^n} (1 + (|x| /a)^b)^{-1 } \;d x .\]
Here, $a$ controls the width of the kernel, while $b$ needs to satisfy $b > n +2$ in order for the kernel to have a finite second moment.

\begin{figure}[t] %  figure placement: here, top, bottom, or page
   \centering
    \includegraphics[width=3in]{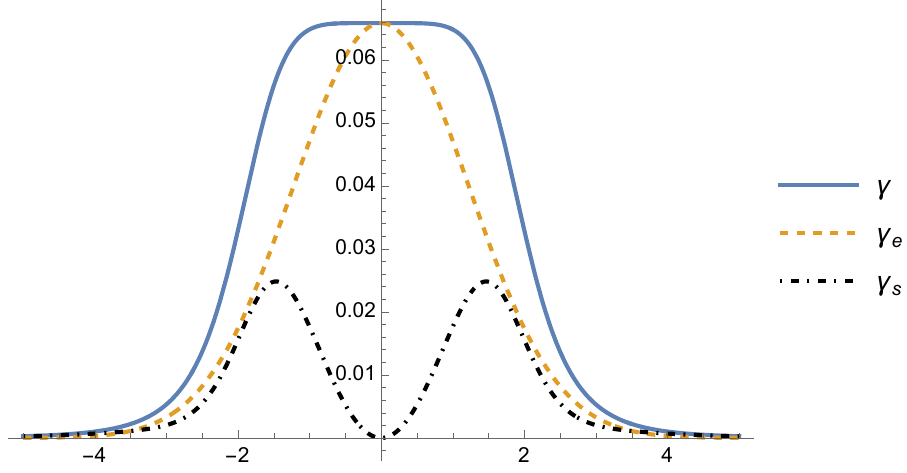} 
   \caption{Plot of convolution kernels $\gamma(x) = \frac{1}{\Gamma} (1 + (|x| /a)^b)^{-1 }, \gamma_e(x) = \frac{1}{\Gamma} \exp( -|x|^2/c^2),$ and $\gamma_s(x) = \gamma(x)- \gamma_e(x)$, with $n =2, a = 2, p=6, c=3$, corresponding to Example 1.  Here the horizontal axis represents $|x|$.}
   \label{fig:Example1}
\end{figure}

In the case when $n=2$, $a = 2$, and $b =6$  we can split the kernel as $\gamma = \gamma_e+ \gamma_s$ with
\[ \gamma_e (x) \ = \ \frac{1}{\Gamma} \exp( -|x|^2/c), \qquad \text{with} \quad c = 3, \quad \Gamma \ = \ \frac{2 \pi^2 a^2}{3\sqrt{3}}. \]
The diffusive operator associated with $\gamma_e$ is then given by the expression
\[ \mathcal{L}_e u(x) \ = \ \Gamma_e \int_{\R^2} (u(y) -u(x) ) \tilde{\gamma}_e(|x-y|) \;dy, \qquad  \tilde{\gamma_e} = \gamma_e/ \Gamma_e \]
where 
\[ \Gamma_e \ = \ \int_{\R^2} \gamma_e(|x|)\;dx \ = \ \frac{\pi c^2}{\Gamma}.\]
When defined on $L^2(\R^2)$, this operator then has a Taylor expansion,
\[ \hat{\mathcal{L}}_e(\xi) \ = \ \Gamma_e \left( - c^2 |\xi|^2 + \mathrm{O}(|\xi|^4) \right),  \]
so that the coercivity constant $\beta \sim \Gamma_e c^2 \frac{1}{d(\Omega)^2} =  \frac{3 \sqrt{3}}{2\pi} \frac{c^4}{a^2} \frac{1}{d(\Omega)^2} = \frac{2.422}{d(\Omega)^2}$.

At the same time we have that $\gamma_s = \gamma- \gamma_e$. The choice $c = 3$ was made so that  $\gamma_s$ is strictly positive (see Figure \ref{fig:Example1}), so we obtain $\Gamma_s = \| \gamma_s\|_{L^1(\R^2)}$.
Thus, $\beta + \Gamma_s - \|\gamma_s\|_{L^1(\R^2)} >0$.

\noindent{\bf Example 2:} Power Law kernel
\[ \gamma(x)\  = \ \frac{1}{\Gamma} (1+ |x|/a)^{-p}, \qquad \Gamma \ = \ \int_{\R^n} (1+ |x|/a)^{-p} \;dx . \]
Again we have that $a$ controls the width of the kernel, while $p$ needs to satisfy $p > n +2$ in order for the kernel to have a finite second moment.

\begin{figure}[t] %  figure placement: here, top, bottom, or page
   \centering
   \includegraphics[width=3in]{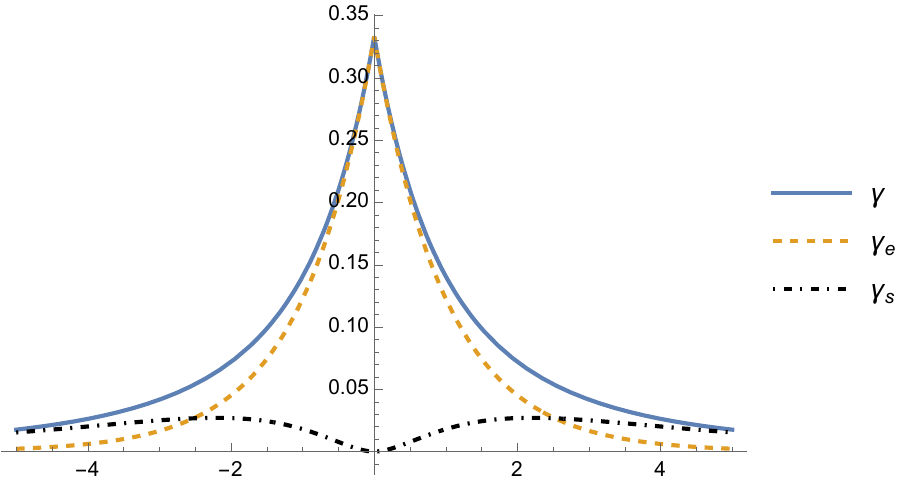} 
   \caption{Plot of convolution kernels $\gamma(x) = \frac{1}{\Gamma} (1+ |x|/a)^{-p}, \gamma_e(x) = \frac{1}{\Gamma} \exp(-|x|/c),$ and $\gamma_s(x) = \gamma(x)- \gamma_e(x)$, with $n =1, a = 3, p=3, c=1$, corresponding to Example 2.}
   \label{fig:Example2}
\end{figure}

For the case when $n=1$, $a = 3$, and $p =3$, we can write $\gamma= \gamma_e + \gamma_s$ with
\[ \gamma_e (x) \ = \ \frac{1}{\Gamma} \exp(-|x|/c), \qquad c \ = \ 1, \quad \Gamma \ = \ \frac{2a}{p-1}.\]
The diffusive operator associated with $\gamma_e$ is then given by the expression
\[ \mathcal{L}_e u \ = \ \Gamma_e \int_{\R} (u(y) -u(x) ) \tilde{\gamma}_e(|x-y|) \;dy, \qquad  \tilde{\gamma_e} \ = \ \gamma_e/ \Gamma_e \]
where 
\[ \Gamma_e \ = \ \int_{\R} \gamma_e(|x|)\;dx \ = \ \frac{2c}{\Gamma}.\]
When defined over $L^2(\R)$, this operator then has a Fourier series expansion,
\[ \hat{\mathcal{L}}_e(\xi) \ = \ \Gamma_e \left( -c^2\xi^2 + \mathrm{O}(|\xi|^4) \right) \]
so that the coercivity constant $\beta \sim \Gamma_e c^2 \frac{1}{d(\Omega)^2} = \frac{p-1}{2a} \frac{c^3}{d(\Omega)^2} $.

Now, the choice of $c =1$ guarantees that $\gamma_s = \gamma - \gamma_e $ is strictly positive (see Figure \ref{fig:Example2} ), so that $\Gamma_s = \| \gamma_s\|_{L^1(\R)}$ and thus $\beta + \Gamma_s - \|\gamma_s\|_{L^1(\R)} >0$.

%%%%%%%%%%%%%%%
\noindent{\bf Example 3: } Inverted Mexican Hat kernel

\[ \gamma(x) \ = \ \frac{1}{\Gamma} (A \exp(-x^2/a^2) -B \exp(-x^2/b^2)) , \qquad \Gamma \ = \ \int_{\R^n} (A \exp(-x^2/a^2) -B \exp(-x^2/b^2))   \;dx .\]
To get an inverted Mexican Hat kernel we assume $0<a<b$, and pick $A,B>0$ such that $(Aa^2-Bb^2)<0$. 

For concreteness, we consider here the case when $n=1, a=1, b=2, A = 2, B=1$, giving us $\Gamma = \sqrt{\pi}(Aa^2-Bb^2)<0$ and split $\gamma = \gamma_e + \gamma_s$ with
\[\gamma_e(x) = - \frac{B}{\Gamma} \exp(-x^2/b^2) \qquad \Gamma_s = \int_\R \gamma_s(x) = -\sqrt{\pi} b^2 \frac{B}{\Gamma}>0 \]
\[\gamma_s(x) = \frac{A}{\Gamma} \exp(-x^2/a^2) \qquad \Gamma_s = \int_\R \gamma_s(x)\;dx = \sqrt{\pi} a^2 \frac{A}{\Gamma}<0 \]
Since $\gamma_s(x)$ is strictly negative (see also Figure \ref{fig:Example3} ), we may apply Proposition \ref{p:bilinear_general}. 

\begin{figure}[t] %  figure placement: here, top, bottom, or page
   \centering
   \includegraphics[width=3in]{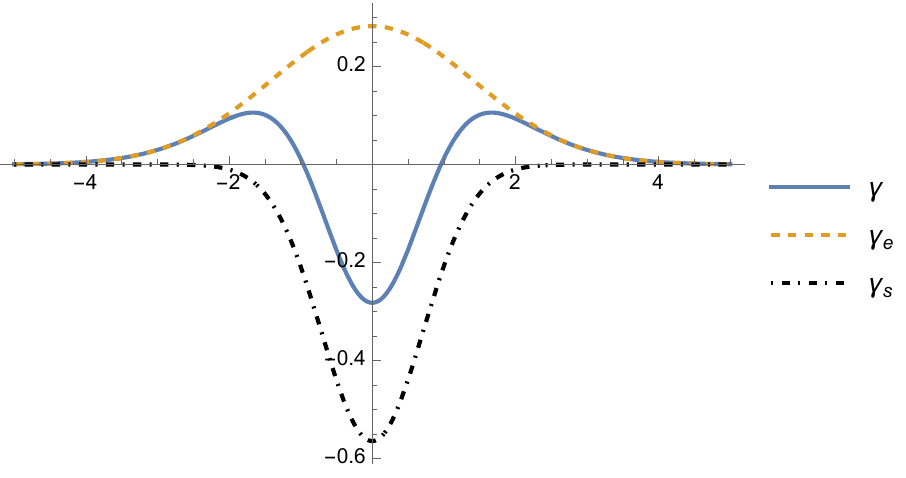} 
   \caption{Plot of convolution kernels $\gamma(x) = \frac{1}{\Gamma}  (A \exp(-x^2/a^2) -B \exp(-x^2/b^2))  , \gamma_e(x) = - \frac{B}{\Gamma} \exp( -|x|^2/b^2),$ and $\gamma_s(x) = - \frac{A}{\Gamma} \exp( -|x|^2/a^2)$, with $n =2, a = 2, p=6, c=3$, corresponding to Example 1. Here the horizontal axis represents $|x|$.}
   \label{fig:Example3}
\end{figure}

 Kernels listed in Examples 1 and 2 are functions used in applications where they describe probability density functions modeling the spread of plant seeds, see \cite{bullock2017}. Example 3 is a variation of the Mexican Hat kernel used in neural field models, where it represents how neurons can excite or inhibit their neighbors, see for instance \cite{kang2003}. The `standard' Mexican Hat kernel models long-range inhibition and short range excitation \cite{ermentrout2015}, but in contrast the kernel used in Example 3, it does not satisfy Assumption \ref{a:gamma}. 

%%%%%%%
\subsection{Properties of the Nonlinearity}\label{ss:nonlinearity}

Just as in \cite{siktar2024superlinear}, we need a series of assumptions on the nonlinearity $f$; the last of these conditions, \ref{A4}, is chosen in favor of the standard Ambrosetti-Rabinowitz condition, since we do not want to be limited to functions $f$ which are odd in the variable $u$. This condition appears with $\mu = 2$ in \cite{torres2016existence}.

\begin{assumption}[Assumptions on data]\label{Assump: AdmissibleControls}
     Let $a_1, a_2 > 0$, $\mu > 2$, $r \in (n + 1, \infty)$ be fixed constants, and let $\al \in (1, 2^*/2)$ be a fixed exponent where $2^* := \frac{2n}{n - 2}$ when $n \geq 3$ and $2^* := \infty$ when $n \leq 2$. 
     %Also let $C: (0, \infty) \rightarrow (0, \infty)$ be fixed. 
     We consider right-hand side datum $f: \Om \times \R \rightarrow \R$ satisfying the following conditions:
    \begin{enumerate}[label=\textbf{A\arabic*}]
    % \item \label{A1} We have the inequality
    %  $\|f\|_{W^{1, r}(\Om \times (-M, M))} \ \leq \ C(M)$ for all $M > 0$. 
        \item \label{A2} We have that 
        \begin{equation}\label{Eq: fGrowth}
            |f(x, t)| \ \leq \ a_1 + a_2|t|^{\al}
        \end{equation}
        for all $x \in \Om$ and $t \in \R$.
        \item\label{A3} We have the limit
        \begin{equation}\label{Eq: uniformLimit}
            \lim_{t \rightarrow 0}\frac{f(x, t)}{t} \ = \ 0
        \end{equation}
        uniformly over $x \in \Om$.
        \item\label{A4} There exists $\th \geq 1$ so that the following holds independently of $f$: if we define 
        \begin{equation}\label{Eq: Gantideriv}
            F(x, t) \ := \ \int^{t}_{0}f(x, \tau)d\tau
        \end{equation}
        then $\th\mathcal{F}(x, t) \geq \mathcal{F}(x, \sig t)$ for $(x, t) \in \R^n \times \R$ and $\sig \in [0, 1]$, where we have defined
        \begin{equation}\label{Eq: ScrFDef}
            \mathcal{F}(x, t) \ := \ tf(x, t) - \mu F(x, t).
        \end{equation}
        \item\label{A5} We have the limit
        \begin{equation}\label{Eq: uniformLimitInfinity}
            \lim_{|t| \rightarrow \infty}\frac{f(x, t)}{t} \ = \ \pm \infty
        \end{equation}
        uniformly over $x \in \Om$, and at least one of the limits $t \rightarrow \pm \infty$ is such that \eqref{Eq: uniformLimitInfinity} equals $+\infty$.
    \end{enumerate}
\end{assumption}

Note that the restriction on $\al$ guarantees that the compact embedding $H^1(\Om) \subset \subset L^{2\al}(\Om)$  \cite[Theorem 9.16]{brezis2011functional}.
As a result, $F: \Om \times \R \rightarrow \R$ satisfies the following:
        \begin{enumerate}[label=\textbf{H\arabic*}]
        \item\label{H1} $F$ has a first-order continuous derivative and first-order continuous derivative with respect to the second argument.
        \item We have (by the definition \eqref{Eq: Gantideriv} and \eqref{Eq: fGrowth})
        \begin{equation}\label{Eq: FGrowth}
        |F(x, t)| \ \leq\ a_1|t| + \frac{a_2}{\al + 1}|t|^{\al + 1}
        \end{equation}
        for a.e. $x \in \Om$ and all $ t \in \R$.
    \end{enumerate}

%%%%%%%%%%%%%%%%%%%%%

%\section{Analysis of the state equation}\label{Sec: StateEqnAnalysis}

\section{Existence of Nontrivial Solutions via Mountain-Pass Theorem}\label{s:mountain-pass}

Our goal in this section is to prove existence of non-trivial solutions to 
\begin{equation}\label{e:problem2}
-\mathcal{L}u =f(x,u) \qquad x \in \Omega \subset \R^n,
\end{equation}
with either nonlocal Dirichlet,
\[ u = 0 \quad x \in \Omega^c,\]
or nonlocal Neumann,
\[\mathcal{L} u = 0 \quad x \in \Omega^c,\]
boundary constraints.
 Our strategy is to use the  Mountain-Pass Theorem, which requires us to define an energy functional associated to our problem. Here we let $I_{D/N}: X \longrightarrow \R$ be given by
\begin{equation}\label{Eq: convolveEnergy}
 I_{D/N}[u] \ = \ \frac{1}{2} B_{D/N}[u,u] - \int_\Omega F(x,u) \;dx,
 \end{equation}
where $F(x,u) = \int_0^u f(x,v) \;dv$ and the space $X$ is either $X^1_D$ in the case of Dirichlet boundary constraints, or $X^1_N$ in the case of Neumann boundary constraints. 

We now state the Mountain-Pass Theorem (originally proven in \cite{rabinowitz1982mountain}), which gives conditions that guarantee existence of non-trivial critical points of an energy functional $I \in C^1(X,\R)$. Here $C^1$ refers to the existence of a \textit{Fréchet derivative} of $I[\cdot]$, and $X^*$ refers to the dual space of a Banach Space $(X, \|\cdot\|_X)$.

\begin{theorem}[Mountain-Pass Theorem]\label{t:mountain-pass} 
Let $(X, \|\cdot\|_X)$ be a Banach space and take $I \in C^1(X; \R)$ to be a functional. Suppose that
\begin{enumerate}
\item[\bf G1] There is a constant $\rho \in \R_+$ such that

\[ \inf_{\|u\|_X=\rho} I[u] \ > \ I[0]=0 .\] 
\item[\bf G2] There is another constant $\be \leq 0$ and an element $e \in X$ with norm $\| e\|_{X} > \rho$ such that $I[e] \leq \be $.
\item[\bf P-S] Every sequence $\{u_n\}$ in $X$ with $\{ I[u_n]\}$ bounded in $\R$ and satisfying $I'[u_n] \to 0$ in $X^*$ has a convergent sub-sequence.
\end{enumerate}
Then, the value $c$, defined as
\begin{equation}\label{e:critical_value}
 c \ := \ \inf_{\gamma \in \Gamma} \max_{t \in [0,1]} I[\gamma(t)]
 \end{equation}
where
\[ \Gamma \ = \ \{ \gamma \in C([0,1], X): \gamma(0) = 0 \quad \gamma(1) = e\}\]
is a critical value of $I:X \longrightarrow \R$. That is, there is an element $w \in X$ such that
\[ I[w] = c, \quad \text{and} \quad I'[w] =0.\]
\end{theorem}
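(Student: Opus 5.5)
This is the classical Mountain-Pass Theorem of Ambrosetti--Rabinowitz, so I will prove it directly from a quantitative deformation lemma. The plan is to argue by contradiction: suppose $c$ is not a critical value, and then deform a nearly optimal path so that its maximum drops below $c$.

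First I will check that $c$ is finite and satisfies $c \geq \inf_{\|u\|_X=\rho} I[u] > 0$. Take any $\gamma \in \Gamma$. Since $\gamma(0)=0$ and $\|\gamma(1)\|_X=\|e\|_X>\rho$, continuity of $t\mapsto \|\gamma(t)\|_X$ gives some $t_0$ with $\|\gamma(t_0)\|_X=\rho$. By \textbf{G1}, $\max_t I[\gamma(t)] \ge \inf_{\|u\|=\rho} I > 0$. The bound $c<\infty$ is clear from the straight path $t e$. Combining \textbf{G1} and \textbf{G2}, we get $c > 0 \ge \max\{I[0],I[e]\}$, so the endpoints lie strictly below level $c$.

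Next comes the deformation lemma, which I expect to be the main obstacle. The claim is: if $c$ is not a critical value, then there exist $\epsilon\in(0,c/2)$ and a continuous map $\eta:X\to X$ with two properties. First, $\eta(u)=u$ whenever $I[u]\notin[c-2\epsilon,c+2\epsilon]$. Second, $\eta$ maps the sublevel set $\{I\le c+\epsilon\}$ into $\{I\le c-\epsilon\}$.

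The ingredients for the deformation lemma are as follows.
\begin{enumerate}
\item \textbf{(P-S)} gives a uniform lower bound: there are $\delta,\epsilon>0$ with $\|I'[u]\|_{X^*}\ge\delta$ on $\{|I-c|\le 2\epsilon\}$. Otherwise we could pick a sequence $u_n$ with $I[u_n]\to c$ and $I'[u_n]\to 0$. By (P-S) it has a convergent subsequence, and the limit $w$ would be critical at level $c$ by the $C^1$ property.
\item Since $I$ is only $C^1$ on a Banach space, I will use a locally Lipschitz pseudo-gradient vector field $V$ on the set of regular points. Its existence follows from a partition of unity on the paracompact metric space of regular points. It satisfies $\|V(u)\|\le 2\|I'[u]\|$ and $\langle I'[u],V(u)\rangle\ge\|I'[u]\|^2$.
\item Fix a locally Lipschitz cutoff $g$ with $g=1$ on $\{|I-c|\le\epsilon\}$ and $g=0$ off $\{|I-c|<2\epsilon\}$. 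Then solve the flow
\[
\dot\sigma=-g(\sigma)\frac{V(\sigma)}{\|V(\sigma)\|}.
\]
The right-hand side is bounded and locally Lipschitz, so the flow exists globally.
\item Along the flow, $\tfrac{d}{dt}I[\sigma]\le -g\,\delta/2$. Taking $\eta=\sigma(T,\cdot)$ with $T=4\epsilon/\delta$ then pushes $\{I\le c+\epsilon\}$ below $c-\epsilon$.
\end{enumerate}

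Finally, the conclusion. Choose $\epsilon$ smaller than $c/2$, so that $I[0],I[e]\le 0<c-2\epsilon$ and hence $\eta$ fixes $0$ and $e$. Pick $\gamma\in\Gamma$ with $\max_t I[\gamma(t)]\le c+\epsilon$. Then $\eta\circ\gamma\in\Gamma$, and $\max_t I[\eta(\gamma(t))]\le c-\epsilon$. This contradicts the definition \eqref{e:critical_value} of $c$, so some $w$ satisfies $I[w]=c$ and $I'[w]=0$.
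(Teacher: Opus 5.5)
Your proof is correct. The paper does not prove this theorem itself; it states it and attributes it to Rabinowitz. What you wrote is the classical argument behind that citation: a quantitative deformation lemma built from a cut-off, normalized pseudo-gradient flow. You apply it to a near-optimal path whose endpoints lie below level $c-2\epsilon$, so the deformation fixes them and keeps the path in $\Gamma$. Spelling the argument out rather than quoting it makes two things visible. First, you invoke (P-S) only for sequences with $I[u_n]\to c$, so you actually prove the theorem under the weaker $(PS)_c$ condition. Second, the locally Lipschitz pseudo-gradient field, normalized so the flow has unit speed and hence exists globally, is what lets $I$ be merely $C^1$ on a general Banach space. Textbook versions that flow along $-I'$ instead need a Hilbert space with $I'$ Lipschitz on bounded sets.
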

%\JMS{Conditions G1 and G2 essentially say the same thing. We should replace G1 with: there exist $\rho, \be > 0$ such that for all $u \in X$ with $\|u\|_X = \rho$, we have $I[u] > \be$, Lemma \ref{Lem: theRingLemma}. Agreed?} \LC{condition G2 is not mentioned in the rest of the text so I would just remove it.} \GJ{I think both conditions are necessary. G1 says that at radius r, the energy is positive. G2 says that there is a 'point' with norm greater than r, where the energy is negative (or less than $\beta$.}

%

In the forthcoming subsections we prove that the energy $I[\cdot]$ satisfies the geometric conditions {\bf G1} and {\bf G2} and the compactness assumption {\bf P-S} stated in Theorem \ref{t:mountain-pass}. As a result we will obtain our main theorem, which we state here.

\begin{theorem}[Multiplicity of solutions to state equation]\label{Thm: Multiplicity}
  Consider the nonlocal equation \eqref{e:nonlocal_eq} and take $X\subset H^1(\R^n)$ to be the appropriate Banach space encoding  boundary constraints. That is, $X = X^1_D$ in the case of Dirichlet, while $X = X^1_N$ in the case of Neumann boundary constraints.
Then, if $f $ satisfies Assumptions \ref{Assump: AdmissibleControls},  the nonlocal problem has at least one non-trivial weak solution $u \in X$.
\end{theorem}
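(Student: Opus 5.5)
The plan is to apply Theorem \ref{t:mountain-pass} to $I=I_{D/N}$ on $X=X^1_{D/N}$ with the $H^1$ norm. This requires checking four things: $I\in C^1$, \textbf{G1}, \textbf{G2}, and \textbf{P-S}. A critical point $w$ with $I[w]=c>0=I[0]$ is automatically nontrivial. Moreover, $I'[w]=0$ means $B_{D/N}[w,\phi]=\int_\Omega f(x,w)\phi\,dx$ for all $\phi\in X$, which is the weak form of \eqref{e:nonlocal_eq} by \eqref{Eq:BilinearToL_D}--\eqref{Eq:BilinearToL_N}.

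\emph{Regularity of $I$.} The quadratic part is $C^1$ by \ref{T3}. For the Nemytskii part $u\mapsto\int_\Omega F(x,u)$, I will use \ref{A2}, \eqref{Eq: FGrowth} and the continuous embedding $H^1(\Omega)\subset L^{\alpha+1}(\Omega)$, which holds because $\alpha+1<2^*$. Standard Krasnoselskii-type arguments then give $C^1$ with derivative $\phi\mapsto\int_\Omega f(x,u)\phi$.

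\emph{G1.} From \ref{A2} and \ref{A3}, for every $\varepsilon>0$ there is $C_\varepsilon$ with $|F(x,t)|\le \varepsilon t^2+C_\varepsilon|t|^{\alpha+1}$. Combining with \ref{T4} and Sobolev embedding gives
$$I[u]\ge \tfrac{\beta_2}{2}\|u\|_{H^1}^2-\varepsilon C\|u\|_{H^1}^2-C_\varepsilon C\|u\|_{H^1}^{\alpha+1}.$$
Since $\alpha+1>2$, choosing $\varepsilon$ small and then $\rho$ small makes the right side strictly positive on $\|u\|=\rho$.

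\emph{G2.} Take the limit in \ref{A5} equal to $+\infty$, say as $t\to+\infty$, and fix $0\le v\in X$, $v\not\equiv0$. In the Neumann case I need some care that a positive element exists; I would take the extension from \eqref{e:exterior_problem} of a positive bump. Then $F(x,t)\ge Mt^2-C_M$ for $t\ge0$, for any $M$. Hence
$$I[sv]\le \tfrac{s^2}{2}C_2\|v\|_{H^1}^2-Ms^2\|v\|_{L^2(\Omega)}^2+C_M|\Omega|,$$
and choosing $M$ large and then $s$ large gives $I[sv]<0$ with $\|sv\|>\rho$. Set $e=sv$.

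\emph{P-S (main obstacle).} Since \ref{A4} replaces Ambrosetti–Rabinowitz, boundedness of PS sequences is the delicate step; I would follow the Cerami/Jeanjean-type argument of \cite{siktar2024superlinear}. Let $(u_k)$ satisfy $I[u_k]$ bounded and $I'[u_k]\to0$. First compute
$$\mu I[u_k]-I'[u_k]u_k=(\tfrac{\mu}{2}-1)B[u_k,u_k]+\int_\Omega\mathcal{F}(x,u_k).$$
If $\|u_k\|\to\infty$, set $w_k=u_k/\|u_k\|$, so that $w_k\rightharpoonup w$ in $H^1$ and $w_k\to w$ in $L^{2\alpha}$ and a.e. There are two cases.

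If $w\ne0$, then on $\{w\neq0\}$ we have $|u_k|\to\infty$, and \ref{A5} with Fatou gives $\int F(x,u_k)/\|u_k\|^2\to\infty$. This contradicts $I[u_k]/\|u_k\|^2\to0$, provided $F$ is bounded below by $-Ct^2$, which I would verify from \ref{A2} and \ref{A5}.

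If $w=0$, choose $t_k\in[0,1]$ maximizing $I[t u_k]$. For any fixed $R$, $I[t_ku_k]\ge I[Rw_k]\ge \tfrac{\beta_2}{2}R^2-o(1)$, since $\int F(x,Rw_k)\to0$ by compactness. Hence $I[t_ku_k]\to\infty$. On the other hand $I'[t_ku_k]t_ku_k=0$, so $\mu I[t_ku_k]=(\tfrac{\mu}{2}-1)t_k^2B[u_k,u_k]+\int\mathcal{F}(x,t_ku_k)$. By \ref{A4} the last integral is at most $\theta\int\mathcal{F}(x,u_k)$. This yields $\mu I[t_ku_k]\le \theta(\mu I[u_k]-I'[u_k]u_k)+C$, up to adjusting for the $B$-term using $t_k\le1$ and $\theta\ge1$, and this is bounded. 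That is a contradiction, though this uses Cerami-type control $\|I'[u_k]\|\|u_k\|\to0$. If only standard PS is available, I would first attempt to show $I'[u_k]u_k/\|u_k\|$ suffices after dividing by $\|u_k\|^2$; this is the step I expect to need the most adjustment.

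Once $(u_k)$ is bounded, extract $u_k\rightharpoonup u$ in $X$ with $u_k\to u$ in $L^{2\alpha}$. Then
$$\beta_2\|u_k-u\|^2\le B[u_k-u,u_k-u]=(I'[u_k]-I'[u])(u_k-u)+\int_\Omega(f(x,u_k)-f(x,u))(u_k-u),$$
by \ref{T4}. The first term tends to $0$. The integral tends to $0$ by Hölder, \ref{A2}, and $\alpha<2^*/2$, since $f(x,u_k)$ is bounded in $L^{2\alpha/\alpha}=L^2$ and $u_k-u\to0$ in $L^2$. Hence $u_k\to u$ strongly, and Theorem \ref{t:mountain-pass} applies.
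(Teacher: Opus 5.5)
Your overall architecture ($C^1$ regularity, \textbf{G1}, \textbf{G2}, \textbf{P-S}, then Theorem \ref{t:mountain-pass}) is the paper's. Your \textbf{G1}, \textbf{G2} and the final strong-convergence step via $\beta_2\|u_k-u\|^2\le B[u_k-u,u_k-u]$ are fine. The genuine gap is the boundedness of Palais--Smale sequences.

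Theorem \ref{t:mountain-pass} uses the ordinary \textbf{P-S} condition. So you only know $|I'[u_k]u_k|\le\|I'[u_k]\|\,\|u_k\|=o(\|u_k\|)$, not that $I'[u_k]u_k$ stays bounded. Your case $w=0$ ends with $\mu I[t_ku_k]\le\theta(\mu I[u_k]-I'[u_k]u_k)+C$, which is not bounded without Cerami-type control, as you suspected.

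Your case $w\neq0$ also fails. It needs $F(x,t)\ge -Ct^2$ and $F(x,t)/t^2\to+\infty$ in the direction of the sign of $w$. However, \ref{A5} only guarantees $+\infty$ in one direction and allows $f(x,t)/t\to-\infty$ in the other. For instance, for $n\le 3$, $f(x,t)=t^2$ satisfies \ref{A2}--\ref{A5} with $\alpha=2$, $\mu=3$, $\theta=1$ (here $\mathcal{F}\equiv0$). Yet $F=t^3/3$ is not bounded below by $-Ct^2$, and on $\{w<0\}$ one has $F(x,u_k)/\|u_k\|^2\to-\infty$. So the Fatou argument collapses.

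The missing idea is that \ref{A4} already gives a uniform lower bound on $\int_\Omega\mathcal{F}(x,u)\,dx$. This turns the argument into the standard Ambrosetti--Rabinowitz one. The paper does this in Lemmas \ref{Lem: FakeARBoundLem1}--\ref{Lem: FakeARBoundLem3}:
\begin{itemize}
\item $\mathscr{F}$ is bounded below on the unit ball of $X$, by \ref{A2} and the embedding into $L^{\alpha+1}(\Omega)$;
\item \ref{A4} with $\sigma=1/t$ gives $\mathscr{F}[tu]\ge\theta^{-1}\mathscr{F}[u]$ for $t>1$;
\item hence $\mathscr{F}\ge -C$ on all of $X$.
\end{itemize}
More directly still, taking $\sigma=0$ in \ref{A4} gives $\theta\mathcal{F}(x,t)\ge\mathcal{F}(x,0)=0$.

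Insert this into the identity $\mu I[u_k]-I'[u_k]u_k=(\tfrac{\mu}{2}-1)B[u_k,u_k]+\int_\Omega\mathcal{F}(x,u_k)$, which you already wrote, and use \ref{T4}. This gives
\[
\Big(\frac{\mu}{2}-1\Big)\beta_2\|u_k\|_{H^1(\Omega)}^2-C\ \le\ \mu\sup_k|I[u_k]|+\|I'[u_k]\|\,\|u_k\|_{H^1(\Omega)} .
\]
Since $\mu>2$, this bounds $\|u_k\|$ for an ordinary PS sequence; it is the paper's Lemma \ref{Lem: Deriv->0}. No case analysis and no Cerami condition are needed. With this replacement the rest of your proof goes through.
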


As a first step, it is straightforward to show that
critical points of these energies correspond to weak solutions of problem \eqref{e:problem2}.

\begin{lemma}\label{l:bilinear}
Suppose $u  \in X$ is a critical point for $I[u]$. Then  $u$ is a weak solution of \eqref{e:problem2}.

\end{lemma}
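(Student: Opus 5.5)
The plan is to compute the Gateaux derivative of $I = I_{D/N}$ at $u$ in an arbitrary direction $\phi \in X$ and then use the hypothesis $I'[u]=0$. We take ``weak solution'' of \eqref{e:problem2} to mean
\[
B_{D/N}[u,\phi] = \int_\Omega f(x,u)\phi \;dx \qquad \text{for all } \phi \in X .
\]
By \eqref{Eq:BilinearToL_D} and \eqref{Eq:BilinearToL_N}, this is the same as $(-\mathcal{L}u,\phi)_{L^2(\Omega)} = (f(\cdot,u),\phi)_{L^2(\Omega)}$. The boundary constraint is already built into the choice $X = X^1_D$ or $X^1_N$.

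The quadratic part is handled first. Since $B_{D/N}$ is symmetric and bilinear,
\[
\tfrac12 B[u+t\phi,u+t\phi] - \tfrac12 B[u,u] = t\,B[u,\phi] + \tfrac{t^2}{2}B[\phi,\phi].
\]
By the boundedness \ref{T3} of Proposition \ref{p:bilinear_general}, $|B[\phi,\phi]| \le C_2\|\phi\|_{H^1}^2$. Dividing by $t$ and letting $t\to0$ therefore gives derivative $B[u,\phi]$, and the same bound shows the quadratic part is Fréchet differentiable.

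The nonlinear part $\Psi(u) = \int_\Omega F(x,u)\,dx$ is the main point, but it is standard. By the mean value theorem,
\[
\frac{F(x,u+t\phi)-F(x,u)}{t} = f(x,u+s t\phi)\,\phi \quad \text{for some } s=s(x,t)\in(0,1).
\]
For $|t|\le1$, the growth bound \ref{A2} gives the $t$-independent bound
\[
|f(x,u+st\phi)\phi| \le \bigl(a_1 + a_2 2^{\alpha}(|u|^\alpha+|\phi|^\alpha)\bigr)|\phi| .
\]
This bound is integrable on $\Omega$ by Hölder's inequality and the embedding $H^1(\Omega)\subset L^{2\alpha}(\Omega)\subset L^{\alpha+1}(\Omega)$: both $|u|^\alpha$ and $|\phi|^{\alpha}$ lie in $L^{(\alpha+1)/\alpha}$, and $\phi \in L^{\alpha+1}$. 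Here we use that $\Omega$ is bounded and $\alpha+1\le 2\alpha < 2^*$. Continuity of $f$ in $t$ (from \ref{H1}) and dominated convergence then give the derivative $\int_\Omega f(x,u)\phi\,dx$.

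If one wants $I\in C^1$ rather than mere Gateaux differentiability, the same growth bound makes the Nemytskii map $u\mapsto f(\cdot,u)$ continuous from $L^{\alpha+1}$ into $L^{(\alpha+1)/\alpha}$ (Krasnosel'skii). Only the Gateaux derivative is needed for this lemma, however, since $I'[u]=0$ implies that every directional derivative vanishes.

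Combining the two parts, $0 = \langle I'[u],\phi\rangle = B_{D/N}[u,\phi] - \int_\Omega f(x,u)\phi\,dx$ for all $\phi\in X$, which is exactly the weak formulation. The step needing the most care is the domination in the nonlinear term, which is where the restriction $\alpha<2^*/2$ and the embedding enter. A minor subtlety in the Neumann case is that $F$ is integrated only over $\Omega$, so only the values of $u$ and $\phi$ on $\Omega$ matter; the exterior behavior is encoded entirely in $B_N$ through \eqref{Eq:BilinearToL_N}.
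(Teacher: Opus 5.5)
Your proof is correct and follows the same route as the paper. Both compute the derivative of $\tau \mapsto I[u+\tau v]$ at $\tau=0$, identify it as $B[u,v]-\int_\Omega f(x,u)v\,dx = (-\mathcal{L}u,v)_{L^2(\Omega)}-(f(\cdot,u),v)_{L^2(\Omega)}$, and set it to zero for every $v\in X$. The paper differentiates under the integral formally, so your mean-value and dominated-convergence argument (using the growth bound \eqref{Eq: fGrowth} and the embedding $H^1(\Omega)\subset L^{\alpha+1}(\Omega)$) supplies rigor the paper omits, and expanding the quadratic term by bilinearity avoids the sign slip in the paper's expansion.
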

\begin{proof}
%Suppose $u \in \widetilde{H^1}(\Omega)$ satisfies \eqref{e:minimum}. 
Fix any $v \in X$ and write
\[ i(\tau) \ = \ I [u + \tau v] \quad \tau \in \R.\]
%Because $u+ \tau v$ belongs to the admissible set $X$, then the function $i(\tau)$ has a minimum at $\tau = 0$. 
Since $u$ is a critical point, we have $\frac{d i}{d\tau} (0) = 0$. 
At the same time, with the notation $u = u(x)$ and $u' = u(y)$, we have that
\begin{equation*}
\begin{aligned}
i(\tau) & =  \frac{1}{4} \iint_{\R^n \times \R^n} \gamma(|x-y|) (u+ \tau v - u' + \tau v')^2 \;dy \;dx - \int_\Omega F(x,u+ \tau v) \;dx\\
\frac{d i}{d\tau} (\tau) & =  \frac{1}{2}  \iint_{\R^n \times \R^n} \gamma(|x-y|) (u+ \tau v - u' + \tau v')(v-v') \;dy \;dx - \int_\Omega f(x,u+ \tau v) v \;dx\\
\frac{d i}{d\tau} (0) & =  \frac{1}{2}  \iint_{\R^n \times \R^n} \gamma(|x-y|) (u - u')(v-v') \;dy \;dx - \int_\Omega f(x,u) v \;dx\\
\frac{d i}{d\tau} (0) & =  ( -\mathcal{L} u, v)_{L^2(\Omega)} - ( f(\cdot,u) , v)_{L^2(\Omega)}
\end{aligned}
\end{equation*}
Since this last condition holds for all $v  \in X$ it follows that $u\in X$  is a weak solution to
\[ -\mathcal{L} u \ = \ f(x,u) \quad x \in \Omega.\]

\end{proof}

%%%%%%%%%%%%%

\subsection{Geometric Conditions}

To prove the first required geometric condition, we need Lemma 3 in \cite{servadei2012mountain}; it applies in our setting  since $f$ satisfies Assumption \ref{Assump: AdmissibleControls} (namely \eqref{Eq: fGrowth} and \eqref{Eq: uniformLimit}).
\begin{proposition}\label{Prop: fB}
    Let $f: \Om \times \R \rightarrow \R$ be a Carathéodory function satisfying \ref{A2} and \ref{A3}, then for any $\ep > 0$ there exists $\de(\ep) > 0$ so that for a.e. $x \in \Om$ and all $t \in \R$,
    \begin{equation}\label{Eq: fB}
        |f(x, t)| \ \leq \ 2\ep|t| + (\al + 1)\de(\ep)|t|^{\al}
    \end{equation}
        and
    \begin{equation}\label{Eq: FB}
        |F(x, t)| \ \leq \ \ep|t|^2 + \de(\ep)|t|^{\al + 1}.
    \end{equation}
    \end{proposition}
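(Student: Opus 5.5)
The plan is a case split in $|t|$ at a threshold determined by \ref{A3}, followed by integration in $t$ to pass from the bound on $f$ to the bound on $F$. Fix $\ep>0$. By \ref{A3}, the limit $f(x,t)/t\to 0$ holds uniformly in $x$. Hence there is $\eta=\eta(\ep)\in(0,1]$ such that
\[ |f(x,t)| \ \leq \ 2\ep|t| \quad \text{for a.e. } x\in\Om \text{ and all } |t|\le \eta. \]
In this range, \eqref{Eq: fB} holds for any choice of $\de(\ep)\ge 0$. The factor $2$ is harmless; it is chosen so that integrating gives exactly $\ep|t|^2$.

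For $|t|>\eta$, I use the growth bound \ref{A2}, namely $|f(x,t)|\le a_1+a_2|t|^\al$. Because $|t|/\eta>1$ and $\al>1$, we have $1\le (|t|/\eta)^\al$, so
\[ |f(x,t)| \ \le \ \big(a_1\eta^{-\al}+a_2\big)|t|^\al. \]
I then define
\[ \de(\ep) \ := \ \frac{a_1\eta(\ep)^{-\al}+a_2}{\al+1}. \]
With this choice, $|f(x,t)|\le (\al+1)\de(\ep)|t|^\al$ for $|t|>\eta$. Combining the two ranges gives \eqref{Eq: fB} for all $t$, since each range is dominated by one of the two nonnegative terms on the right-hand side.

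For \eqref{Eq: FB}, I integrate. By definition \eqref{Eq: Gantideriv} we have $|F(x,t)|\le \left|\int_0^t |f(x,\tau)|\,d\tau\right|$. Inserting \eqref{Eq: fB} pointwise in $\tau$ gives
\[ |F(x,t)| \ \le \ \int_0^{|t|}\big(2\ep s+(\al+1)\de(\ep)s^\al\big)\,ds \ = \ \ep|t|^2+\de(\ep)|t|^{\al+1}. \]
The Carathéodory hypothesis guarantees that $f(x,\cdot)$ is measurable and locally bounded in $\tau$ for a.e. $x$, by \ref{A2}. So $F$ is well defined, and the estimate holds for a.e. $x$.

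I do not expect a real obstacle here. The only point needing care is that \ref{A3} is used uniformly in $x$, so that the threshold $\eta$, and hence $\de$, can be chosen independent of $x$. This is exactly what the uniformity clause in \ref{A3} provides, and the rest is elementary. This matches \cite[Lemma 3]{servadei2012mountain}, which can alternatively be cited directly.
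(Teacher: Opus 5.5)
Your proof is correct and is essentially the argument behind \cite[Lemma 3]{servadei2012mountain}, which the paper cites instead of giving its own proof: a threshold $\eta(\ep)$ from the uniform limit \ref{A3}, absorption of $a_1$ into $(a_1\eta^{-\al}+a_2)|t|^{\al}$ beyond that threshold via \ref{A2}, and integration in $t$ to obtain \eqref{Eq: FB}. The only implicit point is that your small-$|t|$ bound at $t=0$ uses $f(x,0)=0$, which follows from \ref{A3} together with continuity of $f(x,\cdot)$.
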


Now, the first geometric condition ${\bf G1}$ to be proven is a variant of \cite[Proposition 9]{servadei2012mountain}.
\begin{lemma}\label{Lem: theRingLemma}
    There exist constants $\rho, \be > 0$ such that, for all $u \in X$ with 
    $\| u\|_{H^1(\Omega)} = \rho$,
    %$\|\grad u\|_{L^2(\R^n)} = \rho$, 
    we have $I[u] > \be$.
  %  \JMS{Notice that $\|\grad \cdot\|_{L^2(\R^n)}$ is a norm on $X^1_D$ and is arguably the most convenient one to use here.}
\end{lemma}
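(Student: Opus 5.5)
The plan is to bound $I[u]$ from below by a polynomial in $\rho$ with a positive quadratic term and a higher-order negative term, and then take $\rho$ small. The two tools are the coercivity estimate \ref{T4} of Proposition \ref{p:bilinear_general} and the growth bound \eqref{Eq: FB} of Proposition \ref{Prop: fB}.

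\textbf{Step 1 (quadratic part).} For $u\in X$ we have
\[ \tfrac12 B_{D/N}[u,u] \ \geq\ \tfrac{\beta_2}{2}\|u\|^2_{H^1(\Omega)}. \]

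\textbf{Step 2 (nonlinear part).} For the potential term, fix $\ep>0$, to be chosen later, and apply \eqref{Eq: FB}:
\[ \int_\Omega |F(x,u)|\,dx \ \leq\ \ep\|u\|^2_{L^2(\Omega)} + \de(\ep)\|u\|^{\al+1}_{L^{\al+1}(\Omega)}. \]

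\textbf{Step 3 (embeddings).} Since $\Omega$ is bounded, $\|u\|_{L^2(\Omega)}\le\|u\|_{H^1(\Omega)}$. Since $\al+1<2\al<2^*$ (using $\al>1$), the embedding $H^1(\Omega)\subset L^{2\al}(\Omega)$ noted after Assumption \ref{Assump: AdmissibleControls}, together with Hölder on the bounded domain, gives a constant $S$ with $\|u\|_{L^{\al+1}(\Omega)}\le S\|u\|_{H^1(\Omega)}$.

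\textbf{Step 4 (combine).} Putting Steps 1--3 together,
\[ I[u] \ \geq\ \Big(\tfrac{\beta_2}{2}-\ep\Big)\|u\|^2_{H^1(\Omega)} - \de(\ep)S^{\al+1}\|u\|^{\al+1}_{H^1(\Omega)}. \]
Choose $\ep=\beta_2/4$. Then on the sphere $\|u\|_{H^1(\Omega)}=\rho$,
\[ I[u]\ \ge\ \rho^2\Big(\tfrac{\beta_2}{4}-\de S^{\al+1}\rho^{\al-1}\Big). \]
Because $\al>1$, the factor $\rho^{\al-1}$ tends to $0$, so for $\rho$ small enough that $\de S^{\al+1}\rho^{\al-1}\le\beta_2/8$ we get $I[u]\ge \beta_2\rho^2/8$. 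Setting $\be:=\beta_2\rho^2/16$ gives the strict inequality $I[u]>\be>0$.

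\textbf{Main obstacle.} The only delicate point is the bookkeeping of norms on $X$. In the Neumann case the space $X^1_N$ carries a norm on $\R^n$, while the lemma is phrased with $\|u\|_{H^1(\Omega)}$. This causes no difficulty, for three reasons:
\begin{enumerate}
\item \ref{T4} is stated in terms of $\|u\|_{H^1(\Omega)}$;
\item the integral of $F$ only involves $u$ restricted to $\Omega$;
\item the embedding is applied to $u|_\Omega\in H^1(\Omega)$.
\end{enumerate}
The constant $S$ therefore depends only on $\Omega$, $n$ and $\al$, and the argument is identical for the Dirichlet and Neumann cases.
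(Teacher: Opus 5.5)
Your proof is correct and follows the paper's argument essentially step for step. You use coercivity \ref{T4} for the quadratic term, the bound \eqref{Eq: FB} for $F$, and the embedding of $H^1(\Omega)$ into $L^{\al+1}(\Omega)$, then take $\rho$ small using $\al>1$; choosing $\ep=\beta_2/4$ and bounding $\|u\|_{L^2(\Omega)}$ directly by $\|u\|_{H^1(\Omega)}$ are, if anything, slightly cleaner bookkeeping than the paper's normalization to $1/4$ and its extra H\"older step.
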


\begin{proof}
    Let $u \in X$ be arbitrary for now. As a first step, from Lemma \ref{l:bilinear}
\begin{equation}\label{Eq: theRingLemma1}
    I[u] \ \geq \ \frac{\beta_2}{2}\| u\|^2_{H^1(\Omega)} - \int_{\Om}F(x, u(x))dx.
\end{equation}
Next, fix $\ep > 0$ and use \eqref{Eq: FB} to obtain
\begin{equation}\label{Eq: theRingLemma2}
    I[u] \ \geq \ \frac{\beta_2}{2}\| u\|^2_{H^1(\Omega)} - \int_{\Om}\ep|u(x)|^2 + \de(\ep)|u(x)|^{\al + 1}dx.
\end{equation}
By H{\"o}lder's inequality,
\begin{equation}\label{Eq: theRingLemma3}
    I[u] \ \geq \ \frac{\beta_2}{2}\| u\|^2_{H^1(\Omega)} - \ep|\Om|^{\frac{2\al - 2}{\al + 1}}\|u\|^2_{L^{\al + 1}(\Om)} - \de(\ep)\|u\|^{\al + 1}_{L^{\al + 1}(\Om)}.
\end{equation}
Now due to the embedding $H^1(\Om) \subset L^{2\al}(\Om) \subset L^{\al + 1}(\Om)$ we have
\begin{equation}\label{Eq: theRingLemma4}
    I[u] \geq \ \frac{\beta_2}{2}\| u\|^2_{H^1(\Omega)} - \ep|\Om|^{\frac{2\al - 2}{\al + 1}}M^2_{\al + 1}\|u\|^2_{H^1(\Om)} - \de(\ep)M^{\al + 1}_{\al + 1}\|u\|^{\al + 1}_{H^1(\Om)}.
\end{equation}
Factor to obtain
\begin{equation}\label{Eq: theRingLemma6}
    I[u] \ \geq \ \| u\|^2_{H^1(\Omega}\left(\frac{\beta_2}{2}- \ep|\Om|^{\frac{2\al - 2}{\al + 1}}M^2_{\al + 1} - \de(\ep)M^{\al + 1}_{\al + 1}(C_P + 1)^{\al - 1}\|\grad u\|^{\al - 1}_{L^2(\R^n)}\right).
\end{equation}
Pick $\ep > 0$ sufficiently small so that $\frac{\beta_2}{2}- \ep|\Om|^{\frac{2\al - 2}{\al + 1}}M^2_{\al + 1}\geq \frac{1}{4}$. Then we may pick $u$ so that $\| u\|_{H^1(\Omega)} = \rho$ for $\rho$ small enough so that the lower bound in \eqref{Eq: theRingLemma6} equals some positive value $\be$ (for instance, so the quantity in parentheses exceeds $1/8$), and that completes the proof.
\end{proof}

This next lemma, which verifies \textbf{G2}, is a variant of \cite[Proposition 10]{servadei2012mountain}. We cannot use \cite[Lemma 4]{servadei2012mountain} because $f$ and $F$ do not satisfy an Ambrosetti-Rabinowitz condition; the technique shown here provides a workaround.

\begin{lemma}\label{Lem: coercivityLemH^1}
Let $\rho > 0$ be as defined in Lemma \ref{Lem: theRingLemma}.
There exists $e \in X$ with norm $\| e\|_{H^1(\Omega)} > \rho$ such that $I[e] < 0$. 
\end{lemma}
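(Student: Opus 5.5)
We argue along a ray: fix a nonzero $v \in X$ and send $t \to \infty$ in $I[tv]$. The quadratic part grows exactly like $t^2$. By \ref{T3}, $\frac12 B_{D/N}[tv,tv] \le \frac{C_2}{2} t^2 \|v\|^2_{H^1(\Omega)}$. It therefore suffices to show that $\int_\Omega F(x,tv)\,dx$ grows faster than $t^2$, while keeping the sign of $v$ compatible with the direction in which \ref{A5} gives $+\infty$. Without loss of generality, suppose $f(x,t)/t \to +\infty$ as $t \to +\infty$. The case $t\to-\infty$ is handled by replacing $v$ with $-v$, or equivalently by taking $v \le 0$.

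\textbf{Step 1: pointwise superquadratic growth of $F$.} From \ref{A5}, for every $M>0$ there is $T_M$ such that $f(x,s) \ge 2M s$ for $s \ge T_M$, uniformly in $x$. Integrating from $T_M$ to $t$ gives
\[
F(x,t) \ \ge\ M t^2 - M T_M^2 + F(x,T_M).
\]
By \eqref{Eq: FGrowth}, $|F(x,T_M)| \le a_1 T_M + \frac{a_2}{\al+1}T_M^{\al+1}$. Hence $F(x,t) \ge M t^2 - K_M$ for all $t \ge 0$, where $K_M$ is independent of $x$. For $0 \le t \le T_M$ this is also valid, after enlarging $K_M$ using \eqref{Eq: FGrowth}. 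Note that $F$ is bounded below on $[0,\infty)$ by $-K_M$, so no Ambrosetti--Rabinowitz condition is needed here. Condition \ref{A4} is not required for this lemma; it is only needed later, for the Palais--Smale condition.

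\textbf{Step 2: choosing the direction.} Choose $v \in X$ with $v \ge 0$, $v \not\equiv 0$ on $\Omega$.
In the Dirichlet case, take a smooth bump compactly supported in $\Omega$ and extended by zero; it lies in $X^1_D$.
In the Neumann case, nonnegativity of a general element is less clear, and I expect this to be the main technical obstacle: we need some $v \in X^1_N$ with $v \ge 0$ in $\Omega$. I would first try the constants, which satisfy $\mathcal{L}1 = 0$ everywhere. They are not in $H^1(\R^n)$, however. The alternative is the unique extension, via the exterior problem \eqref{e:exterior_problem}, of a nonnegative smooth $g$ on $\Omega$. Only the values of $v$ on $\Omega$ enter $\int_\Omega F$, so positivity in $\Omega$ is all we need. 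Whatever $v$ is chosen, we need $\|v\|_{H^1(\Omega)}$ finite and positive, and $v$ must be regular enough to belong to $X^1_N$.

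\textbf{Step 3: the energy along the ray.} Then
\[
I[tv] \ \le\ \frac{C_2}{2}t^2\|v\|^2_{H^1(\Omega)} - M t^2 \|v\|^2_{L^2(\Omega)} + K_M|\Omega|.
\]
Choose $M$ with $M\|v\|^2_{L^2(\Omega)} > C_2\|v\|^2_{H^1(\Omega)}$, which is possible since $v \not\equiv 0$ in $\Omega$. With this choice, $I[tv] \to -\infty$ as $t\to\infty$. Pick $t_0$ so large that $I[t_0 v]<0$ and $t_0\|v\|_{H^1(\Omega)} > \rho$, and set $e = t_0 v$.
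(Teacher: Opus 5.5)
Your proof is the paper's argument: a ray $t\mapsto I[tv]$ along a nonnegative direction, the bound $F(x,t)\ge Mt^2-K_M$ for $t\ge 0$ obtained from \ref{A5} and \eqref{Eq: FGrowth}, and an upper bound on $B$. The paper uses the $L^2$ bound \ref{T1} and lets $\ep\to 0$, while your use of \ref{T3} with $M$ chosen after fixing $v$ works equally well. The paper also simply assumes that a nonnegative element of $X^1_N$ exists, so the Neumann issue you flag in Step 2 is left open there as well. Be aware that the exterior-problem extension $u$ of a nonnegative $g$ lies in $H^1(\R^n)$ only if the traces match, $g|_{\partial\Omega}=(\gamma\ast u)|_{\partial\Omega}$, since $u=\gamma\ast u$ on $\Omega^c$; this fails, e.g.\ for a bump compactly supported in $\Omega$ when $\gamma>0$.
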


\begin{proof} We follow the technique in the proof of \cite[Theorem 1.1]{torres2016existence}. Assume without loss of generality that in assumption \ref{A5}, the limit $\lim_{t \rightarrow \infty}\frac{f(x, t)}{t} = +\infty$; otherwise use a non-positive test function in the steps that follow. As a result of this limit,
\begin{equation}\label{Eq: coercivityLemH^1Eq1}
    \lim_{t \rightarrow \infty}\frac{F(x, t)}{t^2} \ = \ +\infty.
\end{equation}
Thus, for any  $\ep\in(0,1)$ there exists $K > 0$ such that for any $t > K$, we have that $F(x, t) > \frac{t^2}{\ep}$. To determine a lower bound for $F$ on $ \R^n \times \R^+$, we now fix $(x,t)\in \R^n \times[0,K]$. Using the condition \eqref{Eq: FGrowth}, we get:
\begin{equation}\label{Eq: FDecay}
F(x,t) \ \geq \ -a_1 K - \frac{a_2}{\alpha+1} K^{\alpha+1} \ \geq \ -a_1 K - \frac{a_2}{\alpha+1} K^{\alpha+1} -K^2 \ =: \ -\tilde{K}.
\end{equation}
It then follows that for all $(x, t) \in \R^n \times \R^+$ we have 

\begin{equation}\label{Eq: FGetsLarge}
F(x, t) \ \geq \ \frac{t^2}{\ep} - \frac{\tilde{K}}{\ep}.
\end{equation}

Next, letting $u \in X$ with $\|u\|_{H^1(\Omega)} = 1$ and $u \geq 0$ for a.e. $ x\in \R^n$, we obtain
%Let $u \in C^{\infty}_0(\R^n)$ be such that $u \geq 0$ a.e. in $\R^n$ and then by the previous assertions we have that
\begin{equation}\label{Eq: coercivityLemH^1Eq2}
    \int_{\Omega}\frac{F(x, tu(x))}{t^2}dx \ \geq \ \frac{1}{\ep}\|u\|^2_{L^2(\Omega)} - \frac{\tilde{K}}{\ep t^2}|\Omega|.
\end{equation}

Consequently, we have that
\begin{equation}\label{Eq: coercivityLemH^1Eq4}
\begin{aligned}
    \lim_{t \rightarrow \infty}\frac{I[tu]}{t^2} \ & =  \  \lim_{t \rightarrow \infty}\left( \frac{1}{2t^2} B[tu,tu]
    %\frac{1}{2}\|\grad u\|^2_{L^2(\R^n)} + \frac{1}{2}\|\Lin * \grad u\|^2_{L^2(\R^n)}
     - \int_{\Om}\frac{F(x, tu(x))}{t^2}dx\right) \\
     & \ \leq \ \lim_{t \rightarrow \infty} \left( \left(\frac{C_1}{2}- \frac{1}{\ep}\right)\|u\|^2_{L^2(\Omega)} + \frac{\tilde{K}}{\ep t^2}|\Omega| \right) \\
     & \ \leq \ - \infty,
     \end{aligned}
\end{equation}
where the last line follows from picking $\ep$ sufficiently small. To complete the proof, we may pick $t > \rho >0$ sufficiently large and set $e := tu$. 
 \end{proof}
 
 Notice that if we further assume that the function $F(x,t)$ satisfies
  \begin{equation}\label{e:F_even}
    \lim_{|t| \rightarrow \infty}\frac{F(x, t)}{t^2} \ = \ +\infty
    \end{equation}
 uniformly for a.e. $x \in \Omega$, we obtain a stronger result, which we state next.  
 %to the analysis that forms the basis of the numerical scheme for numerically compute nontrivial critical points of $I[\cdot]$.
 
 \begin{lemma}\label{Lem: coercivityLemH^1_v2}
Let $\rho > 0$ be as defined in Lemma \ref{Lem: theRingLemma} and 
assume $F(x,t)$ satisfies \eqref{e:F_even}. Then, there is a constant $\rho_2>\rho$
such that for all $e \in X$ with norm $\| e\|_{H^1(\Omega)} > \rho_2$ we have that $I[e] \leq 0$.
\end{lemma}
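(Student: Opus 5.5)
The plan is to repeat the argument of Lemma \ref{Lem: coercivityLemH^1}, but with bounds that hold uniformly in $e$ rather than along one fixed ray $tu$. The extra hypothesis \eqref{e:F_even} supplies the needed uniformity in the sign of $t$.

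\emph{Step 1: a two-sided quadratic lower bound on $F$.} By \eqref{e:F_even}, for any $\ep\in(0,1)$ there is $K>0$ such that $F(x,t) > t^2/\ep$ for all $|t|>K$ and a.e. $x\in\Om$. For $|t|\le K$, the growth bound \eqref{Eq: FGrowth} gives $F(x,t)\ge -\tilde K$, with $\tilde K$ as in \eqref{Eq: FDecay}. Exactly as in \eqref{Eq: FGetsLarge}, this yields
\[ F(x,t) \ \geq \ \frac{t^2}{\ep} - \frac{\tilde K}{\ep} \qquad \text{for all } t\in\R \text{ and a.e. } x\in\Om. \]
The bound now holds for negative $t$ as well, so no sign restriction on $e$ is needed.

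\emph{Step 2: an upper bound on $I$ in terms of the $L^2$ norm.} By property \ref{T1} of Proposition \ref{p:bilinear_general}, $B_{D/N}[e,e]\le C_1\|e\|^2_{L^2(\Om)}$. Combining this with Step 1 gives
\[ I[e] \ \leq \ \Big(\frac{C_1}{2}-\frac{1}{\ep}\Big)\|e\|^2_{L^2(\Om)} + \frac{\tilde K}{\ep}|\Om|. \]
Fix $\ep$ so that $\frac{1}{\ep}-\frac{C_1}{2}\ge 1$. Then $I[e]\le -\|e\|^2_{L^2(\Om)}+\tilde K|\Om|/\ep$, so $I[e]\le 0$ as soon as $\|e\|^2_{L^2(\Om)}\ge \tilde K|\Om|/\ep$.

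\emph{Step 3: passing from the $L^2$ norm to the $H^1$ norm.} This is the main obstacle. A large $H^1$ norm does not in general force a large $L^2$ norm, since highly oscillating functions have large gradients but small $L^2$ mass. Here I would use the structure of $B_{D/N}$ established earlier: properties \ref{T4} and \ref{T1} of Proposition \ref{p:bilinear_general} together give
\[ \beta_2\|e\|^2_{H^1(\Om)} \ \le \ B_{D/N}[e,e] \ \le \ C_1\|e\|^2_{L^2(\Om)} \qquad \text{for all } e\in X. \]
So on $X$ the two norms are equivalent, with $\|e\|_{L^2(\Om)}\ge \sqrt{\beta_2/C_1}\,\|e\|_{H^1(\Om)}$. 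Now choose
\[ \rho_2 \ > \ \max\Big\{\rho,\ \sqrt{\tfrac{C_1\tilde K|\Om|}{\beta_2\,\ep}}\Big\}. \]
Then every $e\in X$ with $\|e\|_{H^1(\Om)}>\rho_2$ satisfies the $L^2$ threshold from Step 2, and hence $I[e]\le 0$.

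I expect Step 3 to be where care is needed: the whole argument relies on the bound \ref{T1}, which controls the bilinear form by the $L^2$ norm alone even for $H^1$ functions. This is exactly what rules out the oscillating counterexamples, and it should be the point emphasized in the write-up.
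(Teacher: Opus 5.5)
Your Steps 1 and 2 are correct and match what the paper intends. The paper's proof is only the remark that the argument of Lemma \ref{Lem: coercivityLemH^1} goes through without the sign restriction on the test function. Your two-sided bound $F(x,t)\ge t^2/\ep-\tilde K/\ep$ together with \ref{T1} gives $I[e]\le 0$ whenever $\|e\|^2_{L^2(\Om)}\ge \tilde K|\Om|/\ep$. The gap is Step 3, and it cannot be repaired. The inequality $\beta_2\|e\|^2_{H^1(\Om)}\le C_1\|e\|^2_{L^2(\Om)}$ on $X$ is false. $X$ contains $e_k=\phi(x)\sin(kx_1)$ with $0\neq\phi\in C_c^\infty(\Om)$, for which $\|e_k\|_{L^2(\Om)}\to\|\phi\|_{L^2}/\sqrt{2}$ while $\|e_k\|_{H^1(\Om)}$ grows linearly in $k$. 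So \ref{T1} and \ref{T4} cannot both hold on $X$. \ref{T1} is the true one: $-\mathcal{L}u=u-\gamma\ast u$ is bounded on $L^2$ because $\gamma\in L^1$. Hence \ref{T4} is wrong, and your step inherits the error. Your remark about oscillating functions was the right instinct. Those functions lie in $X$, and no property of $B$ can rule them out; instead they refute \ref{T4}.

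In fact the statement itself is false with the $H^1$ norm. By \ref{A3}, pick $s_0>0$ with $|F(x,t)|\le \frac{\beta_1}{4}t^2$ for $|t|\le s_0$. Take $e_k=s_0\phi(x)\sin(kx_1)$ with $\|\phi\|_{L^\infty}\le 1$. Property \ref{T2} then gives $I[e_k]\ge \frac{\beta_1}{4}\|e_k\|^2_{L^2(\Om)}>0$, although $\|e_k\|_{H^1(\Om)}\to\infty$. So no $\rho_2$ exists.

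The paper's one-line proof has the same hidden gap. The argument of Lemma \ref{Lem: coercivityLemH^1} works one ray at a time. The bound $(C_1/2-1/\ep)\|u\|^2_{L^2(\Om)}$ gives no uniform control on the unit $H^1$-sphere, where $\inf\|u\|_{L^2(\Om)}=0$. What your Steps 1--2 genuinely prove is the $L^2$ version: there is $R>0$ such that $I[e]\le 0$ whenever $\|e\|_{L^2(\Om)}\ge R$.
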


The proof  follows just as in Lemma \ref{Lem: theRingLemma}, except we do not require our test function $u$ to be nonnegative. We will use Lemma \ref{Lem: coercivityLemH^1_v2} in Section \ref{Sec: Special}.

%%%%%%%%%%%%

\subsection{Compactness Conditions}

Now we start working towards proving the Palais-Smale compactness condition \textbf{PS}. To this end we define $\mathscr{F}: X \rightarrow \R$ via 
\begin{equation}\label{Eq: ARFakeMu}
    \mathscr{F}[u] \ := 
    \ \frac{1}{\mu}\int_{\Om}\mathcal{F}(x, u(x))dx \ = \ \int_{\Om}\frac{1}{\mu}f(x, u(x))u(x) - F(x, u(x))dx .
\end{equation}

We need to show this quantity is controlled from below. This comes in three steps, starting with a lower bound on \eqref{Eq: ARFakeMu} for arguments of small norm.

\begin{lemma}\label{Lem: FakeARBoundLem1}
    There exists a $C > 0$ such that for $u \in X$ with $\|u\|_{H^1(\R^n)} \leq 1$, we have the bound
    \begin{equation}\label{Eq: FakeARBoundLem1Eq}
        \mathscr{F}[u] \ \geq \ -C.
    \end{equation}
\end{lemma}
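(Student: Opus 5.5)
Since the functional $\mathscr{F}[u] = \int_\Om \frac{1}{\mu} f(x,u)u - F(x,u)\,dx$ involves only the pointwise values of $u$ on $\Om$, we bound it from below by bounding its absolute value from above via the growth conditions on $f$ and $F$, and then use Sobolev embedding to turn this into a bound depending only on $\|u\|_{H^1}$.

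\emph{Step 1: a pointwise bound.} By the triangle inequality, \eqref{Eq: fGrowth} and \eqref{Eq: FGrowth}, for a.e.\ $x\in\Om$ and all $t\in\R$,
\[
\Big|\tfrac{1}{\mu} f(x,t)t - F(x,t)\Big| \ \leq \ \Big(\tfrac{1}{\mu}+1\Big)a_1|t| + \Big(\tfrac{1}{\mu} + \tfrac{1}{\al+1}\Big)a_2|t|^{\al+1}.
\]
(Alternatively, one could use Proposition \ref{Prop: fB} with a fixed $\ep$, which gives $|t|^2$ and $|t|^{\al+1}$ terms instead. Either version works.)

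\emph{Step 2: integrate and use Sobolev embedding.} Integrating the pointwise bound over $\Om$ and using H\"older's inequality on the bounded domain gives
\[
\mathscr{F}[u] \ \geq \ -C_a\big(\|u\|_{L^1(\Om)} + \|u\|^{\al+1}_{L^{\al+1}(\Om)}\big),
\]
with $C_a$ depending on $a_1,a_2,\mu,\al$. We then estimate $\|u\|_{L^1(\Om)} \leq |\Om|^{1/2}\|u\|_{L^2(\Om)}$. Since $\al+1 < 2\al < 2^*$, the continuous embedding $H^1(\Om)\subset L^{2\al}(\Om)\subset L^{\al+1}(\Om)$ already used in Lemma \ref{Lem: theRingLemma} gives $\|u\|_{L^{\al+1}(\Om)} \leq M_{\al+1}\|u\|_{H^1(\Om)}$. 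Because $\|u\|_{H^1(\Om)}\leq \|u\|_{H^1(\R^n)}\leq 1$, both terms are bounded by constants, so
\[
\mathscr{F}[u]\geq -C_a\big(|\Om|^{1/2} + M_{\al+1}^{\al+1}\big) =: -C .
\]

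\emph{Main obstacle.} The only delicate point is applying the embedding constant correctly. For $X^1_N$ the functions live on $\R^n$, but the integrand in $\mathscr{F}$ involves only $u|_\Om$. Restriction is norm-nonincreasing from $H^1(\R^n)$ to $H^1(\Om)$, and since $\Om$ is smooth and bounded the Sobolev embedding on $\Om$ applies. For $X^1_D$ the same applies trivially. So no real difficulty arises, and the argument holds uniformly in both boundary settings.
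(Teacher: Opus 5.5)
Your proof is correct and follows essentially the same route as the paper: the growth bounds \eqref{Eq: fGrowth} and \eqref{Eq: FGrowth}, H\"older to pass from $L^1(\Om)$ to $L^2(\Om)$, and the embedding $H^1(\Om)\subset L^{\al+1}(\Om)$ combined with $\|u\|_{H^1}\le 1$. Bounding $|\tfrac{1}{\mu}f(x,t)t - F(x,t)|$ pointwise with the triangle inequality is slightly cleaner than the paper's term-by-term treatment. What the $F$ term actually requires is an upper bound on $\int_\Om F(x,u)\,dx$, and your absolute-value estimate supplies this directly, whereas the paper's write-up states a lower bound there.
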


\begin{proof}
We bound both terms of \eqref{Eq: ARFakeMu} separately. To control the term involving $f$, we use \eqref{Eq: fGrowth}, Hölder's inequality, the continuous embedding of $H^1(\Om) \subset L^{\al + 1}(\Om)$, and Poincaré's inequality to obtain
\begin{eqnarray}\label{Eq: FakeARBoundLem1Eq1}
    \begin{aligned}
       & \int_{\Om}f(x, u(x))u(x)dx \ \geq \ \\
       &-\int_{\Om}a_1|u(x)| + a_2|u(x)|^{\al + 1}dx \ = \ \\
       &-a_1\|u\|_{L^1(\Om)} - a_2\|u\|^{\al + 1}_{L^{\al + 1}(\Om)} \ \geq \ \\
       &-a_1|\Om|^{\frac{1}{2}}\|u\|_{L^2(\Om)} - a_2\|u\|^{\al + 1}_{L^{\al + 1}(\Om)} \ \geq \ \\
       &-a_1|\Om|^{\frac{1}{2}}\|u\|_{H^1(\R^n)} - a_2M^{\al + 1}_{\al + 1}\|u\|^{\al + 1}_{H^1(\R^n)}.
    \end{aligned}
\end{eqnarray}
Then since $\|u\|_{H^1(\Om)} \leq 1$ we obtain
\begin{equation}\label{Eq: FakeARBoundLem1Eq2}
    \int_{\Om}f(x, u(x))u(x)dx \ \geq \ -a_1|\Om|^{\frac{1}{2}} - a_2M^{\al + 1}_{\al + 1}.
\end{equation}
Similarly, but using \eqref{Eq: FGrowth} instead of \eqref{Eq: fGrowth}, we obtain
\begin{equation}\label{Eq: FakeARBoundLem1Eq3}
    \int_{\Om}F(x, u(x))dx \ \geq \ -a_1|\Om|^{\frac{1}{2}} - \frac{a_2M^{\al + 1}_{\al + 1}}{\al + 1}.
\end{equation}
Combining \eqref{Eq: FakeARBoundLem1Eq2} with \eqref{Eq: FakeARBoundLem1Eq3}, we set
$$C := \frac{1}{\mu}(a_1|\Om|^{\frac{1}{2}} + a_2M^{\al + 1}_{\al + 1}) + a_1|\Om|^{\frac{1}{2}} + \frac{a_2M_{\al + 1}}{\al + 1}$$
which completes the proof.
\end{proof}

The second step is a scaling argument.

\begin{lemma}\label{Lem: FakeARBoundLem2}
   Let $t \in (1, \infty)$, and let $\th > 1$ be the scaling parameter introduced in \ref{A4}. Then we have
    \begin{equation}\label{Eq: FakeARBoundLem2Eq}
        \mathscr{F}[tu] \ \geq \ \frac{1}{\th}\mathscr{F}[u].
    \end{equation}
\end{lemma}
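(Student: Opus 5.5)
The plan is to obtain this pointwise from assumption \ref{A4} and then integrate over $\Om$. Fix $t > 1$ and $u \in X$, and for a.e. $x \in \Om$ put $s := t u(x)$ and $\sig := 1/t \in (0,1)$. Then $u(x) = \sig s$, and \ref{A4} with the pair $(x, s)$ and this $\sig$ gives
\[
\th\, \mathcal{F}(x, t u(x)) \ = \ \th\,\mathcal{F}(x, s) \ \geq \ \mathcal{F}(x, \sig s) \ = \ \mathcal{F}(x, u(x)).
\]
Dividing by $\th \mu > 0$ yields the pointwise inequality
\[
\frac{1}{\mu}\mathcal{F}(x, t u(x)) \ \geq \ \frac{1}{\th}\cdot\frac{1}{\mu}\mathcal{F}(x, u(x)) \qquad \text{for a.e. } x \in \Om .
\]

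Next, integrate this over $\Om$ and use the definition \eqref{Eq: ARFakeMu} of $\mathscr{F}$ to get $\mathscr{F}[tu] \geq \frac{1}{\th}\mathscr{F}[u]$.

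The only point needing care is that both integrals are finite, so that integrating the inequality is legitimate. I would check this using the growth bounds \eqref{Eq: fGrowth} and \eqref{Eq: FGrowth}. They give $|\mathcal{F}(x,v)| \leq C(|v| + |v|^{\al+1})$. The right-hand side is integrable for $v = u$ and for $v = tu$, since $u \in H^1(\Om) \subset L^{\al+1}(\Om)$ by the embedding already used in Lemma \ref{Lem: theRingLemma}. Measurability of $x \mapsto \mathcal{F}(x,u(x))$ follows from $f$ being Carathéodory.

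I see no real obstacle here. The argument is a direct rescaling in \ref{A4}, and the only subtlety is matching the roles of $t$ and $\sig$ correctly: the larger argument $tu$ sits on the side carrying the factor $\th$.
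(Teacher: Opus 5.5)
Your proposal is correct and is the paper's argument: apply \ref{A4} pointwise at the point $tu(x)$ with $\sig = 1/t$, divide by $\th\mu$, and integrate over $\Om$. The paper writes this as a single chain of inequalities and leaves implicit both the choice of $\sig$ and the integrability of $\mathcal{F}(x,u)$ and $\mathcal{F}(x,tu)$, which your version makes explicit.
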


\begin{proof}
We use the definition of $\mathcal{F}$ from \ref{A4} and its associated scaling inequality:
\begin{eqnarray}\label{Eq: FakeARBoundLem2Eq1}
    \begin{aligned}
        \mathscr{F}[tu] \ &= \ \frac{1}{\mu}\int_{\Om}\mathcal{F}(x, tu(x))dx \\
        \ &\geq \ \frac{1}{\mu \th}\int_{\Om}\mathcal{F}(x, u(x))dx \\
        \ &= \ \frac{1}{\th}\mathscr{F}[u],
    \end{aligned}
\end{eqnarray}
completing the proof.
\end{proof}

Here is the final step for getting the bound we need on $\mathscr{F}[\cdot]$.

\begin{lemma}\label{Lem: FakeARBoundLem3}
     Let $C > 0$ be the constant appearing in Lemma \ref{Lem: FakeARBoundLem1} and let $\th > 1$ be the scaling parameter introduced in \ref{A4}. Then we have
     \begin{equation}\label{Eq: FakeARBoundLem3Eq}
         \mathscr{F}[u] \ \geq \ -\frac{C}{\th},
     \end{equation}
     for any $u \in X$.
\end{lemma}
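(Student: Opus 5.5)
The plan is to split according to the size of $\|u\|_{H^1(\R^n)}$ and combine Lemma \ref{Lem: FakeARBoundLem1} with the scaling estimate of Lemma \ref{Lem: FakeARBoundLem2}. Let $u \in X$ be arbitrary.

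\emph{Small norm.} If $\|u\|_{H^1(\R^n)} \leq 1$, Lemma \ref{Lem: FakeARBoundLem1} gives $\mathscr{F}[u] \geq -C$. This is weaker than $-C/\th$ when $\th > 1$. So in this regime I would bring in the scaling inequality of \ref{A4} directly. Fix $x$ and $t$. Applying \ref{A4} with $\sig = 0$, and using $\mathcal{F}(x,0) = 0\cdot f(x,0) - \mu F(x,0) = 0$, gives $\th \mathcal{F}(x,t) \geq 0$. Hence $\mathcal{F} \geq 0$ pointwise, and therefore $\mathscr{F}[u] \geq 0 \geq -C/\th$ for every $u$, in every regime. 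If this pointwise observation is accepted, the lemma follows at once, since $\mathscr{F}[u] = \frac{1}{\mu}\int_\Om \mathcal{F}(x,u(x))\,dx \ge 0$.

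\emph{Large norm, the route the preceding lemmas suggest.} I would also record the argument the three-step structure is built for, since it does not rely on the $\sig=0$ case. For $\|u\|_{H^1(\R^n)} > 1$, set $t := \|u\|_{H^1(\R^n)} > 1$ and $v := u/t$. Then $v \in X$, because $X$ is a linear space: both boundary constraints $u=0$ on $\Om^c$ and $\mathcal{L}u = 0$ on $\Om^c$ are linear. Also $\|v\|_{H^1(\R^n)} = 1$. Lemma \ref{Lem: FakeARBoundLem2} applied to $v$ with this $t$ gives $\mathscr{F}[u] = \mathscr{F}[tv] \geq \frac{1}{\th}\mathscr{F}[v]$. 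Lemma \ref{Lem: FakeARBoundLem1} gives $\mathscr{F}[v] \geq -C$, so $\mathscr{F}[u] \geq -C/\th$.

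\emph{Main obstacle.} The main obstacle is reconciling the small-norm case with the constant $-C/\th$ rather than $-C$. Lemma \ref{Lem: FakeARBoundLem1} alone only gives $-C$ there, so the proof must rely on the nonnegativity of $\mathcal{F}$ deduced from \ref{A4} with $\sig=0$. If one prefers not to use that degenerate case, the natural fallback is to state the conclusion as $\mathscr{F}[u] \geq -\max\{C, C/\th\} = -C$. That weaker bound is all that is needed downstream, namely a uniform lower bound on $\mathscr{F}$ for the Palais--Smale argument. I would present the $\sig = 0$ argument as the primary proof and the scaling argument as the mechanism that yields the bound in the regime $\|u\|_{H^1} > 1$.
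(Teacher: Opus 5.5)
Your proposal is correct, and your primary argument takes a different and stronger route than the paper. The paper's proof is your ``large norm'' argument alone: for $\|u\|_{H^1}>1$ it rescales to the unit sphere and combines Lemma~\ref{Lem: FakeARBoundLem2} with Lemma~\ref{Lem: FakeARBoundLem1}. For $\|u\|_{H^1}\le 1$ it simply declares the proof complete by Lemma~\ref{Lem: FakeARBoundLem1}.

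As you noticed, that small-norm step only gives $\mathscr{F}[u]\ge -C$, which is weaker than $-C/\theta$ when $\theta>1$. So the paper's argument really establishes the bound $-C$. That is still enough for the boundedness of Palais--Smale sequences, where any uniform lower bound works. Your observation closes the gap: \ref{A4} gives $\theta\mathcal{F}(x,t)\ge\mathcal{F}(x,0)=0$, hence $\mathcal{F}\ge 0$ and $\mathscr{F}[u]\ge 0$. This also makes Lemmas~\ref{Lem: FakeARBoundLem1}--\ref{Lem: FakeARBoundLem2} unnecessary here.

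You need not treat $\sigma=0$ as a suspicious degenerate case, and the fallback to $-C$ is never needed. Even if $\sigma$ were restricted to $(0,1]$, the growth bound \ref{A2} gives $|\mathcal{F}(x,\sigma t)|\le \sigma|t|(a_1+a_2|\sigma t|^{\alpha})+\mu\bigl(a_1\sigma|t|+\tfrac{a_2}{\alpha+1}|\sigma t|^{\alpha+1}\bigr)\to 0$ as $\sigma\to 0^+$. Letting $\sigma\to0^+$ yields the same nonnegativity.

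Your route also exposes something the paper's hides. Because $\mu>2$, assumption \ref{A4} already contains the global Ambrosetti--Rabinowitz-type inequality $tf(x,t)\ge \mu F(x,t)$. Then $I[u]-\frac{1}{\mu}I'[u]u\ge(\frac12-\frac1\mu)\beta_2\|u\|^2_{H^1(\Om)}$ follows directly. The scaling argument is only natural when $\mu=2$, where the quadratic part cancels in $I[u]-\frac12 I'[u]u$; in this paper it is redundant.
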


\begin{proof}
If $\|u\|_{H^1(\Om)} \leq 1$ then the proof is complete by Lemma \ref{Lem: FakeARBoundLem1}. Otherwise let $w := \frac{u}{\|u\|_{H^1(\Om)}}$ and we may apply Lemma \ref{Lem: FakeARBoundLem2} with $t := \|u\|_{H^1(\Om)}$ followed by Lemma \ref{Lem: FakeARBoundLem1} to conclude
\begin{equation}\label{Eq: FakeARBoundLem3Eq1}
    \mathscr{F}[u] \ = \ \mathscr{F}[tw] \ \geq \ \frac{1}{\th}\mathscr{F}[w] \ \geq \ -\frac{C}{\th},
\end{equation}
completing the proof.
\end{proof}

Since proving \textbf{PS} requires a compactness argument, we must show that sequences approximating critical points are bounded.

\begin{lemma}\label{Lem: Deriv->0}
    Let $\{u_j\}^{\infty}_{j = 1} \subset X$ be a sequence such that \begin{equation}\label{Eq: PSCriterionDeriv}
    \lim_{j \rightarrow \infty}I'[u_j] \ = \ 0
    \end{equation}
    and 
    \begin{equation}\label{Eq: PSCriterion}
    \lim_{j \rightarrow \infty}I[u_j]\ = \ c
    \end{equation}
    for some $c \in \R$. Then $\{u_j\}^{\infty}_{j = 1}$ is bounded in $X$.
\end{lemma}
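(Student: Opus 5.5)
The plan is the standard Ambrosetti--Rabinowitz-type bound, with Lemma \ref{Lem: FakeARBoundLem3} playing the role of the AR condition. The key quantity is the combination $I[u_j] - \frac{1}{\mu}\langle I'[u_j], u_j\rangle$. Using \eqref{Eq: convolveEnergy} and the computation in Lemma \ref{l:bilinear} (with $v = u_j$), this combination equals
\begin{equation*}
I[u_j] - \frac{1}{\mu}\langle I'[u_j], u_j\rangle \ = \ \left(\frac{1}{2} - \frac{1}{\mu}\right) B_{D/N}[u_j,u_j] + \mathscr{F}[u_j].
\end{equation*}
So the argument begins by computing this identity, which uses nothing beyond the definition of $I$ and the derivative formula already derived.

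Next I bound both sides. On the right, coercivity \ref{T4} of Proposition \ref{p:bilinear_general} together with $\mu > 2$ gives
\[
\left(\tfrac{1}{2} - \tfrac{1}{\mu}\right) B_{D/N}[u_j,u_j] \ \geq \ \left(\tfrac{1}{2} - \tfrac{1}{\mu}\right)\beta_2 \|u_j\|^2_{H^1(\Omega)},
\]
and Lemma \ref{Lem: FakeARBoundLem3} gives $\mathscr{F}[u_j] \geq -C/\theta$. On the left, \eqref{Eq: PSCriterion} makes $|I[u_j]| \leq |c| + 1$ for $j$ large. From \eqref{Eq: PSCriterionDeriv}, $\|I'[u_j]\|_{X^*} \leq 1$ for $j$ large, so $|\langle I'[u_j], u_j\rangle| \leq \|u_j\|_X$. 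Combining these yields
\[
\left(\tfrac{1}{2} - \tfrac{1}{\mu}\right)\beta_2 \|u_j\|^2_{H^1(\Omega)} \ \leq \ |c| + 1 + \tfrac{C}{\theta} + \tfrac{1}{\mu}\|u_j\|_X .
\]
This is a quadratic inequality in $\|u_j\|$ with positive leading coefficient and linear right-hand side. It forces $\|u_j\|$ to be bounded for all large $j$, and the finitely many remaining terms are trivially bounded.

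The main obstacle is reconciling norms. The coercivity estimate controls $\|u_j\|_{H^1(\Omega)}$, whereas $\|I'[u_j]\|_{X^*}$ is measured in the norm of $X$. For $X^1_D$ the two agree, since $u = 0$ off $\Omega$. For $X^1_N$ the norm is $\|u\|_{H^1(\mathbb{R}^n)} + |u|_{B_N}$. To close the inequality I would show this is equivalent to $\|u\|_{H^1(\Omega)}$ on $X^1_N$. One route is to apply Lemma \ref{l:equivalent_norms} to $u$ and to $\nabla u$, which also satisfies $\mathcal{L}\nabla u = 0$ on $\Omega^c$ by Remark \ref{r:propertyX_N}. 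Alternatively, one can argue directly that $|u|^2_{B_N} = B_N[u,u] \leq C_2\|u\|^2_{H^1(\Omega)}$ by \ref{T3}.

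Once this equivalence is in hand, the quadratic-versus-linear comparison above applies in a single norm and concludes the proof.
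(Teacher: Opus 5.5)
Your argument is essentially the paper's: it also bounds $I[u_j]-\frac{1}{\mu}I'[u_j]u_j$ above by $\kappa(1+\|u_j\|_{H^1(\Omega)})$ using the Palais--Smale hypotheses, bounds it below by $(\frac12-\frac1\mu)\beta_2\|u_j\|^2_{H^1(\Omega)}-C/\theta$ via \ref{T4} and Lemma \ref{Lem: FakeARBoundLem3}, and concludes by comparing quadratic and linear growth. Your attention to the $X^1_N$ norm addresses a point the paper passes over silently (it simply works in $\|\cdot\|_{H^1(\Omega)}$); of your two routes, only the first (Lemma \ref{l:equivalent_norms} applied to $u$ and $\nabla u$) closes the gap, because the bound $|u|^2_{B_N}\le C_2\|u\|^2_{H^1(\Omega)}$ from \ref{T3} alone does not control the $\|u\|_{H^1(\mathbb{R}^n)}$ part of the norm.
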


\begin{proof}
    %Due to the convergence properties of $I_f$ and $I_f'(\cdot)$, 
    Let $v_j = \frac{u_j}{\| u\|_{H^1(\Omega)}}$.
    Due to our assumptions, there exists $\ka > 0$ so that for all $j \in \N^+$ sufficiently large we have that
    \begin{equation}\label{Eq: Deriv->0Eq1}
        %\left|I_f'(u_j)\frac{u_j}{\|\grad u_j\|_{L^2(\R^n)}}\right|
         \left| \langle I'[u_j], v_j \rangle  \right| \ \leq \ \ka\mu.
    \end{equation}
    and
    \begin{equation}\label{Eq: Deriv->0Eq2}
        |I[u_j]| \ \leq \ \ka.
    \end{equation}
    It then follows that, for all large $j \in \N^+$, 
\begin{equation}\label{Eq: PSUB2}
   I[u_j] - \frac{1}{\mu}I'[u_j]u_j \ \leq \ \ka(1 + \| u_j\|_{H^1(\Omega)}).
\end{equation}
where the constant $\mu$ is the same as in Assumption \ref{A4}.

Now, to find a lower bound for the difference described in \eqref{Eq: PSUB2}, notice that for any $j \in \N^+$ \eqref{e:nonlocal_operator} and Proposition \ref{l:bilinear} (or Proposition \ref{p:bilinear_N}) give us
    \begin{eqnarray}\label{Eq: PS1}
        \begin{aligned}
            I[u_j] - \frac{1}{\mu}I'[u_j]u_j \ = \ & \left( \frac{1}{2} - \frac{1}{\mu} \right) B[u_j,u_j] 
            + \int_{\Om}\frac{1}{\mu}f(u_j(x))u_j(x) - F(x, u_j(x))dx, \\
           \ \geq \ & \left( \frac{1}{2} - \frac{1}{\mu} \right) \beta_2 \| u_j\|^2_{H^1(\Omega)} - \frac{C}{\theta},
        \end{aligned}
    \end{eqnarray}
   where the inequality follows from Proposition \ref{l:bilinear} (or Proposition \ref{p:bilinear_N}) and Lemma \ref{Lem: FakeARBoundLem3},

Finally, combining \eqref{Eq: PS1} with \eqref{Eq: PSUB2} we obtain,
\begin{equation}
    \left(\frac{1}{2} - \frac{1}{\mu}\right)\be_2\| u_j\|^2_{H^1(\Omega)} - \frac{C}{\th} \ \leq \ \ka(1 + \| u_j\|_{H^1(\Omega)}),
\end{equation}
from which the boundedness of $\{u_j\}^{\infty}_{j = 1}$ in $X$ follows, since $\mu > 2$.
\end{proof}

\begin{lemma}[Palais-Smale Compactness]\label{Lem: PalaisSmale}
    Let $\{u_j\}^{\infty}_{j = 1} \subset X$ be a bounded sequence in $X$ such that $I'[u_j] \rightarrow 0$ as $j \rightarrow \infty$. Then there exists $u \in X$ such that, up to a not relabeled sub-sequence, $u_j \rightarrow u$ strongly in $X$.
\end{lemma}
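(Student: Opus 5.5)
The plan is the standard argument for semilinear problems with subcritical growth: combine coercivity of the bilinear form with the compact embedding to upgrade weak convergence to strong convergence. Write $B = B_{D/N}$ and note that $B$ is an inner product on $X$ equivalent to the $H^1(\Omega)$ norm, by \ref{T3}--\ref{T4} of Proposition \ref{p:bilinear_general}.

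\textbf{Step 1 (weak limit).} Since $\{u_j\}$ is bounded in $X$, reflexivity lets us extract a subsequence with $u_j \rightharpoonup u$ weakly in $X$. The condition $\alpha < 2^*/2$ gives the compact embedding $H^1(\Omega)\subset\subset L^{2\alpha}(\Omega)$. Hence $u_j \to u$ strongly in $L^{2\alpha}(\Omega)$, and therefore also in $L^{\alpha+1}(\Omega)$ and $L^2(\Omega)$, since $\Omega$ is bounded and $\alpha+1 \le 2\alpha$. I also need $X$ to be weakly closed, so that $u\in X$. For $X^1_D$ this holds because the constraint $u=0$ on $\Omega^c$ is linear and closed. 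For $X^1_N$, the map $\mathcal{L}$ is a bounded convolution operator on $L^2(\R^n)$, so the constraint $\mathcal{L}u=0$ on $\Omega^c$ is preserved under weak limits.

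\textbf{Step 2 (identity from the derivative).} Test $I'[u_j]-I'[u]$ against $u_j-u$. By coercivity \ref{T4},
\begin{equation*}
\beta_2\|u_j-u\|^2_{H^1(\Omega)} \le B[u_j-u,u_j-u] = \langle I'[u_j]-I'[u],u_j-u\rangle + \int_\Omega \big(f(x,u_j)-f(x,u)\big)(u_j-u)\,dx .
\end{equation*}
The term $\langle I'[u_j],u_j-u\rangle$ is bounded by $\|I'[u_j]\|_{X^*}\|u_j-u\|_X$. This tends to $0$ because $I'[u_j]\to 0$ and $\{u_j - u\}$ is bounded. The term $\langle I'[u],u_j-u\rangle$ tends to $0$ by weak convergence, since $I'[u]\in X^*$ is fixed.

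\textbf{Step 3 (nonlinear term).} This is the main point to check. By \ref{A2},
\begin{equation*}
\int_\Omega |f(x,u_j)||u_j-u|\,dx \le a_1|\Omega|^{1/2}\|u_j-u\|_{L^2(\Omega)} + a_2\|u_j\|^{\alpha}_{L^{\alpha+1}(\Omega)}\|u_j-u\|_{L^{\alpha+1}(\Omega)},
\end{equation*}
using H\"older with exponents $\frac{\alpha+1}{\alpha}$ and $\alpha+1$. The factor $\|u_j\|_{L^{\alpha+1}(\Omega)}$ is uniformly bounded by the embedding constant times $\sup_j\|u_j\|_{H^1}$, while $\|u_j-u\|_{L^2(\Omega)}$ and $\|u_j-u\|_{L^{\alpha+1}(\Omega)}$ tend to $0$ by Step 1. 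The same estimate applies with $u$ in place of $u_j$ in the $f$ factor. Hence the integral tends to $0$, and therefore $\|u_j-u\|_{H^1(\Omega)}\to 0$.

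Two technical points need some care. First, $I\in C^1$ with $\langle I'[v],w\rangle = B[v,w]-\int_\Omega f(x,v)w\,dx$. This follows from the growth bound \eqref{Eq: FGrowth} and the Nemytskii operator being continuous from $L^{\alpha+1}$ to $L^{(\alpha+1)/\alpha}$. Second, in the Neumann case, convergence in $\|\cdot\|_{H^1(\Omega)}$ must be compared with the $X^1_N$ norm. Here I would invoke Lemma \ref{l:equivalent_norms} together with Remark \ref{r:propertyX_N}, applied to $u_j-u$ and to $\nabla(u_j-u)$: the values on $\Omega$ determine the exterior extension, so $H^1(\Omega)$ convergence controls the $X^1_N$ norm.
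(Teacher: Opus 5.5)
Your proof is correct, but it closes the argument by a different mechanism than the paper.

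\textbf{The paper's route.} After extracting the weak limit and strong $L^{2\alpha}(\Omega)$ convergence, the paper takes a dominating function $\ell\in L^{2\alpha}(\Omega)$ (the partial converse of dominated convergence). With it, the paper passes to the limit in $\int_\Omega f(x,u_j)u_j\,dx$ and $\int_\Omega f(x,u_j)u_k\,dx$. A diagonal subsequence then gives $\lim_j B[u_j,u_j]=\int_\Omega f(x,u)u\,dx$. Strong convergence follows because $B$ is an equivalent inner product: weak convergence plus convergence of norms.

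\textbf{Your route.} You apply \ref{T4} directly to $u_j-u$ and dispose of the nonlinear remainder with \eqref{Eq: fGrowth} and H\"older. So no a.e.\ convergence, dominating function or diagonal argument is needed.

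\textbf{What yours buys.} It is quantitative, and it uses the Palais--Smale hypothesis only through $\|I'[u_j]\|_{X^*}\|u_j-u\|_X\to 0$. It also never needs $B[u,u]=\int_\Omega f(x,u)u\,dx$. The paper's last step tacitly requires that identity to turn its limit into $B[u_j,u_j]\to B[u,u]$; it would come from the otherwise unused claim $B[u_j,v]\to B[u,v]$ together with $I'[u_j]\to 0$.

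Your version also sidesteps two blemishes in the written proof:
\begin{itemize}
\item The stray factor $\frac12$ in front of $B[u_j,u_k]$ is inconsistent with the derivative computed in Lemma \ref{l:bilinear}.
\item The diagonal-subsequence step is not really a justification for interchanging the two limits. It is more simply and rigorously replaced by $|\langle I'[u_j],u_j\rangle|\le\|I'[u_j]\|_{X^*}\|u_j\|_X\to 0$, which is exactly the kind of bound you use.
\end{itemize}
In addition, you make explicit why $H^1(\Omega)$ convergence controls the $X^1_N$ norm in the Neumann case, via Lemma \ref{l:equivalent_norms}, Remark \ref{r:propertyX_N}, and \ref{T3} for the seminorm $|\cdot|_{B_N}$. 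The paper folds this into the phrase ``equivalent norm.''

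\textbf{What the paper's buys.} The pointwise/dominated-convergence approach would adapt to settings where only a.e.\ convergence with domination is available rather than a clean H\"older estimate. Combined with $B[u_j,v]\to B[u,v]$, it also identifies the weak limit as a weak solution along the way.
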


\begin{proof} We proceed similarly to \cite[Proposition 12]{servadei2012mountain}. First, by reflexivity, there exists a $u \in X$ so that $u_j \rightharpoonup u$ weakly in $X$ (up to a not-relabeled sub-sequence). Then we claim for all $v \in X$,
\begin{equation}\label{Eq: WeakConvergenceB}
\lim_{j \to \infty} B[u_j, v] \ = \ B[u,v].
\end{equation}
More specifically, since $u_j \rightharpoonup u$ weakly in $X$, and $B[\cdot, \cdot]$ is symmetric and satisfies the identity \eqref{Eq:BilinearToL_D} in the Nonlocal Dirichlet case (or \eqref{Eq:BilinearToL_N} in the Nonlocal Neumann case), we have
\begin{equation}\label{Eq: ClosednessAdmissibleEq1A}
             \lim_{j \rightarrow \infty}B[u_j, v] \ = \ \lim_{j \rightarrow \infty}B[v, u_j] \ = \ \lim_{j \rightarrow \infty}\lang -\Lin v, u_j\rang_{L^2(\Om)} \ = \ \\
             \lang -\Lin v, u\rang_{L^2(\Om)} \ = \ B[v, u] \ = \ B[u, v] .
\end{equation}

Due to the weak convergence and compact embeddings, there exists a further, not relabeled sub-sequence so $u_j \rightarrow u$ strongly in $L^{2\al}(\Om)$ and a.e. in $\Om$. In addition, due to \cite[Theorem 9]{brezis1983theorie}, there exists a function $\ell \in L^{2\al}(\Om)$ so that $|u_j(x)| \leq \ell(x)$ a.e. $x \in \Om$, for all $j \in \N^+$. With this in mind, we claim that
\begin{equation}\label{Eq: PSConvEq3}
    \lim_{j \rightarrow \infty}\int_{\Om}f(x, u_j(x))u_j(x) \ = \ \int_{\Om}f(x, u(x))u(x)dx.
\end{equation}
First notice that the integrands exhibit pointwise convergence due to the continuity of $f$. Thus to invoke the Dominated Convergence Theorem, we must show that the integrands have a pointwise, $L^1(\Om)$ upper bound uniformly in $j$. We use \eqref{Eq: fGrowth} to choose a candidate for such a bound:
\begin{eqnarray}\label{Eq: PSConvEq4}
\begin{aligned}
    &f(x, u_j(x))u_j(x) \ \leq \ \ell(x)(a_1 + a_2\ell(x)^{\al}) \ = \ a_1\ell(x) + a_2\ell(x)^{\al + 1}
    \end{aligned}
\end{eqnarray}
The upper bound is finite since $\ell \in L^{2\al}(\Om)$. So, we may apply the Dominated Convergence Theorem to obtain \eqref{Eq: PSConvEq3}. By the same argument, we have that for any $k \in \N^+$,
\begin{equation}\label{Eq: PSConvEq6}
    \lim_{j \rightarrow \infty}\int_{\Om}f(x, u_j(x))u_k(x)dx \ = \ \int_{\Om}f(x, u(x))u_k(x)dx.
\end{equation}
Our assumption $I'[u_j] \to 0$, together with the previous limits, shows that
\[\lim_{j \to \infty} \frac{1}{2} B[u_j,u_k] \ = \ \lim_{j \to \infty} \int_\Omega f(x, u_j(x))u_k(x)dx = \int_\Omega f(x,u(x))u_k(x) \;dx \]
We can therefore write
\[ \int_\Omega f(x,u(x))u(x) \;dx = \lim_{j \to \infty} \int_\Omega f(x, u_j(x))u_j(x)dx \ = \ \lim_{j \to \infty} \frac{1}{2} B[u_j,u_j]  \]
where the last equality follows from picking a diagonal subsequence, which we do not relabel.
Because the bilinear form $B: X \times X \longrightarrow \R$ defines an equivalent norm in the space $X$, (see Proposition \ref{p:bilinear_N}), it follows that $u_j \to u$ strongly in $X$.

\end{proof}

Having proven \textbf{PS} finishes the proof of Theorem \ref{Thm: Multiplicity}.

%%%%%%%%%
\section{Existence of Nontrivial Solutions for Special Geometries}\label{Sec: Special}
In this section we provide an alternative proof for the existence of non-trivial weak solutions to our nonlocal problem \eqref{e:nonlocal_eq}. Our results are summarized in Theorem \ref{t:existence_2} and form the basis for the numerical scheme presented in Section \ref{s:numerics}. The analysis given here extends the results from \cite{bailova2021mountain} to nonlocal equations with nonlinearities satisfying Assumption \ref{Assump: AdmissibleControls} as well as the extra condition \eqref{e:F_even}. This additional restriction on $f$ guarantees that the energy functional attains a negative value for all points on a ball of sufficiently large radius. We will see in Section \ref{s:numerics} that the resulting geometric structure makes it possible to find critical points using gradient descent.

As in the previous section, we consider the energy, $I: X \longrightarrow \R$, associated with equation \eqref{e:problem2} and defined by
\begin{equation}\label{e:energy_repeat}
 I[u] \ = \ \frac{1}{2} B[u,u] - \int_\Omega F(x,u) \;dx.
 \end{equation}
Again, we take $X = X^1_D$ in the case of Dirichlet and $X = X^1_N$ in the case of Neumann boundary constraints.
We then have that  the corresponding bilinear forms $B:X \times X \longrightarrow \R$, defined in Subsection  \ref{ss:properties_nonlocal_op}, are both bounded and coercive. Recall as well that the nonlinearity $f(x,u)$, appearing in our nonlocal problem \eqref{e:nonlocal_eq}, satisfies Assumption \ref{Assump: AdmissibleControls} and defines $F(x,u) = \int_0^u f(x,v)\;dv$.

\begin{theorem}\label{t:existence_2}
Let $I: X \longrightarrow \R$, given in \eqref{e:energy_repeat}, be the energy associated with the nonlocal problem \eqref{e:nonlocal_eq}. If we further assume that the nonlinear function $F(x,t)$ satisfies
\begin{equation}\label{a:F}
 \lim_{|t| \to \infty} \frac{F(x,t)}{t^2} \ = \ \infty
 \end{equation}
uniformly for a.e. $ x\in \Omega$, then
\[ \bar{c} \ = \ \inf_{w \in X, w\neq 0} \max_{t\geq 0} I[tw]\]
is a critical value of $I[\cdot]$.
\end{theorem}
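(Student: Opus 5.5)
The plan is to show that $\bar c$ coincides with the mountain-pass value $c$ of Theorem \ref{t:mountain-pass}, for a suitable choice of endpoint $e$. Theorem \ref{Thm: Multiplicity} and its lemmas then give that $c$ is a critical value. I will proceed in three steps.

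\textbf{Step 1: geometry along rays.} Fix $w\neq 0$ and set $h_w(t)=I[tw]$ for $t\ge 0$. By Lemma \ref{Lem: theRingLemma}, $h_w(t)>\beta>0$ at $t=\rho/\|w\|_{H^1(\Omega)}$. By Lemma \ref{Lem: coercivityLemH^1_v2}, which uses \eqref{a:F}, $h_w(t)\le 0$ as soon as $t\|w\|_{H^1(\Omega)}>\rho_2$. Since $h_w$ is continuous and $h_w(0)=0$, the maximum $\max_{t\ge0}I[tw]$ is attained on a compact interval, so $\bar c$ is well defined. Moreover $\bar c\ge\beta>0$, because every ray crosses the sphere $\|u\|_{H^1(\Omega)}=\rho$.

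\textbf{Step 2: comparison with the minimax value.} Fix $w_0$ with $\|w_0\|_{H^1(\Omega)}=1$ and set $e=Rw_0$ with $R>\rho_2$, so that $I[e]\le0$ and {\bf G1}, {\bf G2} hold. Let $c$ be the value \eqref{e:critical_value} for this $e$.

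For the upper bound $c\le\bar c$, take any $w\neq0$. I want to build a path in $\Gamma$ whose maximum is $\max_{t\ge0}I[tw]$ plus an arbitrarily small error. The path runs radially from $0$ out to $Rw/\|w\|$. It then goes from $Rw/\|w\|$ to $e$ along the sphere of radius $R$, parametrized as $R\,\frac{(1-s)w/\|w\|+sw_0}{\|(1-s)w/\|w\|+sw_0\|}$. On this arc the norm stays equal to $R>\rho_2$, so $I\le0$ there by Lemma \ref{Lem: coercivityLemH^1_v2}.

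\emph{Main obstacle.} This arc degenerates if $w/\|w\|=-w_0$. In that case I first move $w_0$ slightly and use continuity, or I route the arc through a third direction. Alternatively I can use that the sphere in an infinite-dimensional space is path connected. Either way the arc never leaves the region $\{\|u\|=R\}$, where $I\le 0$. The maximum along the whole path is therefore $\max_{t\ge0}I[tw]$, and taking the infimum over $w$ gives $c\le\bar c$.

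For the lower bound $\bar c\le c$, I would follow \cite{bailova2021mountain}. Every path $g\in\Gamma$ crosses the sphere $\|u\|=\rho$. The difficulty is that the ray through such a crossing point can have a larger maximum than the path does. The standard remedy is to work on the Nehari-type set $\mathcal N=\{u\neq0:\ \langle I'[u],u\rangle=0\}$, where $\bar c=\inf_{\mathcal N}I$ whenever each ray has a unique maximizer. I expect this step to be the genuinely hard one, since uniqueness of the radial maximum needs monotonicity of $f(x,t)/t$, which is not assumed.

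\textbf{Fallback and conclusion.} If the lower bound cannot be established, I would argue directly that $\bar c$ is critical. Take a minimizing sequence $w_k$ for $\bar c$, normalized so that $\|w_k\|_{H^1(\Omega)}=1$, and let $t_k$ be maximizers of $t\mapsto I[tw_k]$. Each $u_k=t_kw_k$ satisfies $\langle I'[u_k],u_k\rangle=0$, and $\rho\le t_k\le\rho_2$ by Step 1. Apply Ekeland's variational principle to the continuous functional $w\mapsto\max_{t\ge 0}I[tw]$ on the unit sphere, combined with a deformation argument. This should produce a Palais--Smale sequence at level $\bar c$. Lemma \ref{Lem: Deriv->0} and Lemma \ref{Lem: PalaisSmale} then yield a strongly convergent subsequence whose limit $u$ satisfies $I'[u]=0$ and $I[u]=\bar c$. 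Either route ends with this Palais--Smale step, which is available from Section \ref{s:mountain-pass}.
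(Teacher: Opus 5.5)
There is a genuine gap, and it is the one you flagged: the proposal never establishes $\bar c\le c$, and the fallback does not replace it. The value $\bar c$ is a min--max over the family of rays. The deformation lemma does not map rays to rays, so $\bar c$ can only be shown critical by tying it to something that is deformation-invariant. Concretely, you need one of two facts.
\begin{itemize}
\item Every $g\in\Gamma$ passes through some $u\neq0$ with $I[u]=\max_{t\ge0}I[tu]$.
\item $\Psi(w)=\max_{t\ge0}I[tw]$ is differentiable on the unit sphere, with derivative given by $I'$ at the maximizer (the Szulkin--Weth reduction). Then Ekeland's inequality $\Psi(w)\ge\Psi(w_k)-\frac1k\|w-w_k\|$ becomes $I'[t_kw_k]\to0$.
\end{itemize}
Both require the maximizer $t^*(w)$ of $h_w$ to be unique and continuous in $w$. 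Assumptions \ref{A2}--\ref{A5} together with \eqref{a:F} do not give this. Suppose $h_w$ has an inner peak of height $a(w)$ and an outer peak of height $b(w)$, and these heights cross as $w$ moves. Then $\Psi=\max(a,b)$ can attain its minimum at a kink where $I$ has nonzero tangential derivative at both ray maximizers, so neither is a critical point. Moreover, a path can go out where $a$ is low, travel in the valley between the peaks, and exit where $b$ is low; such a path stays below $\bar c$. Your phrase ``combined with a deformation argument'' is exactly where this obstruction sits. As written, the fallback restates what must be proved rather than proving it.

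The paper follows the same two-inequality plan, after first attaining $\bar c$ in Lemma \ref{l:min_max}. It settles $\bar c\le c$ by asserting that the set $M$ of ray maxima contains the set $m$ of path maxima. But at a point $u=g(s)$ where a path attains its maximum, one only knows $I[u]\le\max_{t\ge0}I[tu]$, which is the wrong direction. So the paper does not supply the missing idea either.

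The clean repair is the hypothesis you identified. Assume $t\mapsto f(x,t)/|t|$ is strictly increasing on $(-\infty,0)$ and on $(0,\infty)$, as it is for $t^3$ and $t^5$. Then $h_w'(t)/t$ is strictly decreasing, so $t^*(w)$ is unique. Along any $g\in\Gamma$, the quantity $\langle I'[g(s)],g(s)\rangle$ is positive when $g(s)\neq0$ has small norm. It is negative at $s=1$, because $I[e]\le0$ forces $\|e\|>t^*(e/\|e\|)$. Hence some $s_1$ has $u=g(s_1)\neq0$ with $\langle I'[u],u\rangle=0$. This $u$ is the ray maximizer, so $\max_sI[g(s)]\ge I[u]=\max_{t\ge0}I[tu]\ge\bar c$.

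A secondary point: your arc on $\{\|u\|_{H^1(\Omega)}=R\}$ uses the uniform form of Lemma \ref{Lem: coercivityLemH^1_v2}, and that form cannot hold in infinite dimensions. Take $\|e_k\|_{H^1(\Omega)}=R$ with $e_k\rightharpoonup0$. Then $e_k\to0$ in $L^{\alpha+1}(\Omega)$, so $\int_\Omega F(x,e_k)\,dx\to0$ by Proposition \ref{Prop: fB}, while $\frac12B[e_k,e_k]\ge\frac{\beta_2}{2}R^2$ by \ref{T4}. The paper's alternative, a curve inside the level set $\{I=\sigma\}$, is not justified either.

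Route the path this way instead:
\begin{itemize}
\item go out along $\hat w=w/\|w\|$ to $T\hat w$;
\item cross via $T\big((1-s)\hat w+sw_0\big)$, detouring through a third direction if $\hat w=-w_0$;
\item come back along $w_0$ to $e=Rw_0$.
\end{itemize}
On the crossing segment, \ref{T1} and \eqref{a:F} give $I[Tv]\le T^2\big(\frac{C_1}{2}-\frac1\epsilon\big)\|v\|^2_{L^2(\Omega)}+C_\epsilon$, which is negative uniformly once $\epsilon$ is small and $T$ is large. On the return leg, choose $R$ so that $I[tw_0]<0$ for all $t\ge R$.
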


%%%

Our goal is to show that there exists an element $\bar{u} \in X$ such that $I[\bar{u}] = \bar{c}$ and
\[ \bar{c} \ = \ \inf_{w \in X, w\neq 0} \max_{t\geq 0} I[tw] \ = \ c \]
where $c$ denotes the critical value of the energy $I[\cdot]$ given in Theorem \ref{t:mountain-pass} equation \eqref{e:critical_value}.
To do this, we need the next lemma.

%%%

\begin{lemma}\label{l:min_max}
Let $I: X \longrightarrow \R$, satisfy the assumptions of Theorem \ref{t:existence_2}. Then,
\[ \bar{c} \ = \ \inf_{w \in X, w\neq 0} \max_{t\geq 0} I[tw] \ = \ \min_{w \in X, w\neq 0} \max_{t\geq 0} I[tw].  \]
\end{lemma}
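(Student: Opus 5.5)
We will show that the infimum defining $\bar c$ is attained. The plan is to take a minimizing sequence, normalize it onto the unit sphere, extract a weakly convergent subsequence, and show that the weak limit realizes $\bar c$. Since $I[t(sw)] = I[(ts)w]$, the quantity $m(w) := \max_{t\ge 0} I[tw]$ depends only on the direction $w/\|w\|_{H^1(\Omega)}$. It therefore suffices to minimize $m$ over the unit sphere $S=\{w\in X: \|w\|_{H^1(\Omega)}=1\}$.

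\textbf{Step 1: $m$ is finite and positive on $S$, uniformly.} For $w\in S$, Lemma \ref{Lem: theRingLemma} gives $I[\rho w] > \be$, so $m(w)\ge \be>0$ and hence $\bar c\ge \be$. By Lemma \ref{Lem: coercivityLemH^1_v2}, which uses \eqref{a:F}, we have $I[tw]\le 0$ for $t>\rho_2$. The maximum is therefore attained on $[0,\rho_2]$, and it is bounded above by $\frac{C_2}{2}\rho_2^2 + \int_\Omega |F(x,tw)|\,dx$. The growth bound \eqref{Eq: FGrowth} together with the Sobolev embedding controls the integral, so $m$ is bounded on $S$.

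\textbf{Step 2: extract a limit direction.} Let $w_k\in S$ with $m(w_k)\to\bar c$. By reflexivity and the compact embedding $H^1(\Omega)\subset\subset L^{2\al}(\Omega)\subset L^{\al+1}(\Omega)$, we may pass to a subsequence with $w_k\rightharpoonup w$ weakly in $X$, $w_k\to w$ strongly in $L^{\al+1}(\Omega)$ and in $L^2(\Omega)$, and $w_k\to w$ a.e.

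As in the proof of Lemma \ref{Lem: PalaisSmale}, dominated convergence with the majorant $\ell$ shows that $\int_\Omega F(x,tw_k)\,dx \to \int_\Omega F(x,tw)\,dx$ uniformly for $t\in[0,\rho_2]$. Since $B$ is a nonnegative quadratic form that is bounded on $X$, it is weakly lower semicontinuous, so $B[w,w]\le\liminf_k B[w_k,w_k]$. Consequently, for each fixed $t\ge 0$,
\[ I[tw] \ \le\ \liminf_{k\to\infty} I[tw_k] \ \le\ \liminf_{k\to\infty} m(w_k) \ =\ \bar c. \]

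\textbf{Step 3 (main obstacle): $w\neq 0$.} If the weak limit vanishes, the argument breaks down. Suppose $w=0$. Then $w_k\to 0$ in $L^2(\Omega)$ and in $L^{\al+1}(\Omega)$, so $\int_\Omega F(x,tw_k)\,dx\to 0$ for each fixed $t$.

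The difficulty is that Lemma \ref{Lem: coercivityLemH^1_v2} only yields $I[tw_k]\le 0$ for $t>\rho_2$, and that estimate in fact relies on $\|w_k\|_{L^2(\Omega)}$ staying away from zero, via the $\frac{1}{\ep}\|u\|^2_{L^2}$ term in the proof of Lemma \ref{Lem: coercivityLemH^1}. To get a contradiction, I would instead use the $L^2$ coercivity \ref{T2}, $B[w_k,w_k]\ge \beta_1\|w_k\|^2_{L^2(\Omega)}$, together with the $H^1$ coercivity \ref{T4}, $B[w_k,w_k]\ge\beta_2$. Then, for any fixed large $T$,
\[ m(w_k)\ \ge\ I[Tw_k]\ \ge\ \tfrac{\beta_2}{2}T^2 - \int_\Omega F(x,Tw_k)\,dx \ \longrightarrow\ \tfrac{\beta_2}{2}T^2 \quad (k\to\infty). \]
Since $T$ is arbitrary, this forces $\bar c=\infty$, contradicting Step 1. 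Hence $w\neq 0$.

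This contradiction argument is where the real content lies: it shows that normalized minimizing directions cannot lose their $L^2$ mass, and it relies on \eqref{a:F} only through the uniform bound in Step 1.

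\textbf{Conclusion.} Since $w\neq0$, Step 2 gives $\max_{t\ge0} I[tw]\le\bar c$, while the definition of $\bar c$ as an infimum gives the reverse inequality. Hence $\max_{t\ge 0}I[tw]=\bar c$, so the infimum is attained and is a minimum.
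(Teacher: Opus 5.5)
Your argument is correct, given the estimates the paper provides (Proposition \ref{p:bilinear_general} and Lemmas \ref{Lem: theRingLemma}, \ref{Lem: coercivityLemH^1_v2}). It is organized differently from the paper's proof, and it is more complete.

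The paper passes to the ray-maximizers $u=t^*w$ and places them in the ball $K$ of radius $\rho_2$. It notes that $K$ is compact in $L^p(\Omega)$ and that $G(u)=I[u]$ is bounded there, and concludes that the infimum is attained. As written this has three gaps. It supplies no lower semicontinuity, and boundedness on a compact set does not produce a minimizer. It overlooks that $G$ is defined only on the set of ray-maximizers, not on all of $K$. It never explains why the limit of a minimizing sequence is not $0\in K$.

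Your Step 2 supplies exactly the missing inequality $m(w)\le\liminf_k m(w_k)$, via limits at fixed $t$, weak lower semicontinuity of $B$, and continuity of $u\mapsto\int_\Omega F(x,u)\,dx$ on $L^{\al+1}(\Omega)$. Your Step 3 supplies the nonvanishing.

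Two points of comparison:
\begin{enumerate}
\item Step 3 only uses $\bar c<\infty$. This already follows from \eqref{a:F} along a single fixed ray, as in the proof of Lemma \ref{Lem: coercivityLemH^1}. Step 2 only needs convergence at each fixed $t$. So your argument does not really depend on the uniform radius $\rho_2$, whose proof you rightly questioned.
\item The paper's ball formulation, if completed, pays for the uniform $\rho_2$ but gets a cheaper nonvanishing argument. The $L^p$-limit $u$ of the ray-maximizers satisfies $I[u]=\bar c\ge\beta>0=I[0]$ by continuity of $I$ on $L^p(\Omega)$ (from \ref{T1} and \eqref{Eq: FGrowth}), hence $u\neq 0$.
\end{enumerate}

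Your Step 3 rests instead on \ref{T4}, and this is the fragile ingredient. Together with \ref{T1}, it would give $\beta_2\|u\|^2_{H^1(\Omega)}\le C_1\|u\|^2_{L^2(\Omega)}$ on $X$. That would make Step 3 a one-liner, but it cannot hold on an infinite-dimensional $X$, by the compactness of $H^1(\Omega)\subset L^2(\Omega)$ and Riesz's lemma. So Step 3, like Lemma \ref{Lem: theRingLemma}, stands or falls with \ref{T4}.
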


\begin{proof}
Let $w \in X$ with $\|w \|_{H^1(\Omega)} =1$, and define
\[ g_w(t) \ = \ I[tw] \ = \ t^2 B[w,w] - \int_\Omega F(x,tw) \;dx.\]
Notice that $g: \R_+ \to \R$ is a smooth function of $t$. 
We want to show that $g$ attains its maximum on $\R_+$.

First, Proposition \ref{Prop: fB} implies that $g_w(0) =0$. Next, Lemma \ref{Lem: theRingLemma} shows there are numbers $\rho,\beta >0$ such that for $t= \rho$ the function $g_w(t) >\beta$, independently of $w$.
Since $F(x,t)$ satisfies property \ref{a:F}, from Lemma \ref{Lem: coercivityLemH^1_v2} we have that there exists a constant $\rho_2> \rho$ such that for all $t\geq \rho_2$ the function $g_w(t) <0$, independently of $w$. It follows that $g_w(t)$ attains its maximum for some value $t^* \in (0, \rho_2)$.

Let $G(u) = g_w(t^*)$, with $u := t^* w$ and notice that
\[ \bar{c} \ = \ \inf_{\substack{w \in X,\\ w\neq 0}} \max_{t\geq 0} I(tw) \ = \ \inf_{u \in K } G(u)\]
where $K$ is the closed ball of radius $\rho_2$ in $X$, i.e.,
\[ K \ = \ \{ u \in X: \| u\|_{H^1(\Omega)} \leq \rho_2\}.\]

{
From the compact Sobolev embedding $H^1(\Omega) \subset \subset L^p(\Omega)$, valid for any $p \in (2, 2^*)$ with $2^* =  2n/(n-2)$, we see in addition that $K$ is a compact subset of $L^p(\Omega)$. Since $F$ satisfies Assumption \ref{Assump: AdmissibleControls} and picking $ p>\alpha +1>2$, it follows that $G: L^p(\Omega) \longrightarrow \R$ is a bounded functional. Indeed, we can write
\[ |G(u)| \ \leq \ |B[u,u] | +  \int_\Omega \left| F(x,u) \right| \;dx\]
and bound first
\begin{align*}
B[u,u] \ & = \ (- \mathcal{L} u, u)_{L^2(\Omega)} \\
| B[u,u] | & \ \leq \ \|\mathcal{L}u \|_{L^2(\Om)} \| u\|_{L^2(\Omega)} \\
| B[u,u] | & \ \leq \ \|\mathcal{L}\| \|u \|^2_{L^2(\Om)} \\
| B[u,u] | & \ \leq \ C \|\mathcal{L}\| \|u \|^2_{L^p(\Om)},
\end{align*}
where the last inequality follows from the embedding $L^p(\Omega) \subset L^r(\Omega)$, valid for bounded domains $\Omega$ when $p>r$, and the fact that the operator $\mathcal{L}:L^2(\Omega) \longrightarrow L^2(\Omega)$ is bounded.

Next, from Proposition \ref{Prop: fB}, we have that for any $\ep >0$ there is a $\delta(\ep) >0$ such that 
\[ \left| F(x,u) \right| \ \leq \ \ep |u|^2 + \delta(\ep) |u|^{\alpha+1}.\]
As a result,
\begin{align*}
\int_\Omega \left| F(x,u) \right| \;dx & \ \leq \ \int_\Omega \ep |u|^2 + \delta(\ep) |u|^{\alpha+1} \;dx\\
\int_\Omega \left| F(x,u) \right| \;dx & \ \leq \ \ep \|u\|_{L^2(\Omega)}^2 + \delta(\ep)  \|u\|_{L^{\alpha+1}(\Omega)}^{\alpha+1} \\
\int_\Omega \left| F(x,u) \right| \;dx & \ \leq \ C( \ep \|u\|_{L^p(\Omega)}^2 + \delta(\ep)  \|u\|_{L^{p}(\Omega)}^{\alpha+1} ) ,
\end{align*}
where again the last inequality follows from selecting $p> \alpha +1$ and using the embedding $L^p(\Omega) \subset L^{\alpha+1}(\Omega)$.

We therefore conclude that the functional $G:L^p(\Omega) \longrightarrow \R$  attains its infimum in $K$, and there is a $\bar{u} \in K$ such that 
\[  \inf_{u \in K} G(u) \ = \ \bar{c} \ = \ G(\bar{u}) \ = \  \min_{u \in K } G(u).\] 
}

%From the compact Sobolev embedding $H^1(\Omega) \subset \subset L^p(\Omega)$ with $2< p< 2^* = 2n/(n-2)$, we see in addition that $K$ is a compact subset of $L^p(\Omega)$. 

%Since $F$ satisfies Assumption \ref{Assump: AdmissibleControls} and $ 2<p<2^*$, it follows that $G: L^p(\Omega) \longrightarrow \R$ is a bounded functional. Consequently it attains its infimum in $K$ and there is a $\bar{u} \in K$ such that 
%\[  \inf_{u \in K} G(u) = \bar{c}  = G(\bar{u})  =  \min_{u \in K } G(u).\] 
\end{proof}

\begin{proof}[Proof Theorem \ref{t:existence_2} ]
The results from Lemma \ref{l:min_max} show there is an element $\bar{u} \in X$ such that 
\begin{equation}\label{Eq: IDNInitial}
I[ \bar{u}] \ = \ \bar{c} \ = \  \inf_{\substack{w \in X, \\ w\neq 0}} \max_{t\geq 0} I[tw].
\end{equation}
We now want to show that 
\begin{equation}\label{e:c=bar_c}
 I[ \bar{u}] \ = \ c \ = \ \inf_{\gamma \in \Gamma} \max_{t \in [0,1]} I[\gamma(t)],
 \end{equation}
where the set
\[ \Gamma \ = \ \{ \gamma \in C([0,1], X): \gamma(0) = 0 \quad \gamma(1) = e\},\]
with the element $e \in X$ satisfying
\[ \sigma \ := \ I[e] \ < \ I[0] \ = \ 0.\]
%Notice that by the Mountain-Pass theorem $c$ is a critical value of $I[\cdot]$, and that without loss of generality we may assume that $\sigma <0$. %Recall from the Mountain-Pass Theorem that $c$ is a critical value of our energy. 

To prove \eqref{e:c=bar_c}, we introduce again the trajectories
$g_w(t) =  I[tw]$,
with $w \in X $ satisfying $\|w\|_{H^1(\Omega)} = 1$. Let $T_w>0$ be such that
\[g_w(T_w) \ = \ I[T_w w] \ = \ I[e] \ = \ \sigma.\]
Construct a continuous path, $\tilde{\gamma}(t)$, joining the trajectory $g_w(t)$ for $t\in (0,T_w)$ and a second curve which stays on the level set $\{I[w]= \sigma\}$ and connects $g_w(T_w)$ to the element $e$. Rescale the $t$-variable so that $\tilde{\gamma}(t) \in \Gamma$. It then follows that
\begin{equation}\label{Eq: cIneq}
c \ = \ \inf_{\gamma \in \Gamma} \max_{t \in [0,1]} I[\gamma(t)] \leq \inf_{\substack{w \in X,\\ \|w\| =1}} \max_{t\geq 0}g_w(t)= \bar{c}.
\end{equation}

To obtain the reverse inequality of \eqref{Eq: cIneq}, observe that, from Lemma \ref{Lem: coercivityLemH^1_v2}, given any $w \in X$ with $\|w \|_{H^1(\Omega)} =1$,
 we have that
\[ \max_{t\geq 0} g_w(t)  \ = \ \max_{t\in [0,T_w]} g_w(t).\]
It follows that the set of maximum values
\[ M \ = \ \Big\{ \max_{t\in [0,T_w]} g_w(t) : w \in X, \quad \text{with} \quad \|w \|_{H^1(\Omega)} =1\Big\}\]
contains all possible local maxima of $I[\cdot]$. In other words, $M$ contains the set
\[ m \ = \ \Big\{  \max_{t \in [0,1]} I[\gamma(t)] : \gamma \in \Gamma\Big\}.\]
As a result,
\[ \bar{c} \ = \ \inf_{\substack{w \in X, \\ w\neq 0}} \max_{t \geq 0} g_w(t)\  \leq \ \inf_{\gamma \in \Gamma} \max_{t \in [0,1]} I[\gamma(t)] = c.\]
and we obtain the conclusion of the theorem.

\end{proof}

\section{Numerical scheme and illustrations} \label{s:numerics}
In this section, we propose an algorithm to solve the problem \eqref{e:nonlocal_eq} based on the algorithm proposed in \cite{bailova2021mountain} for problems with p-Laplacian and homogeneous Dirichlet boundary conditions.

%%%%%%%%
\subsection{Algorithm}\label{s:numerics_algo}
In the following, we describe the various steps of our algorithm for approximating a solution of the problem \eqref{e:nonlocal_eq} supplemented by homogeneous nonlocal Dirichlet or nonlocal Neumann boundary conditions. For clarity, we drop the subscript $D/N$ when referring to the space $X_{D/N}^1$ or the functional $I_{D/N}$. 

For any function $u \in X$, we denote by $t^*_u$ a positive real number that satisfies
$$I[t^*_u u]  = \max_{t\geq 0} I[tu].$$
The existence of $t^*_u$ is given by Lemma \ref{l:min_max}. We also denote the operator norm on the energy space of $I$ as $\|\cdot\| := \|\cdot\|_{\Lin(X, \R)}$.

The algorithm for approximating a non-trivial solution $u$ of problem \eqref{e:nonlocal_eq} reads as follows.
\begin{itemize}
    \item[0.] Let $\epsilon>0$ be a given tolerance and $\delta>0$ be a given step length.
    
    \item[1.] Let $u_1$ be an arbitrary nonzero function in $X$ that represents an initial guess.
    
    \item[2.] Find $t^*_{u_1}$ and set $w_1 = t^*_{u_1} u_1$.
    
    \item[3.] If $\|I^\prime [w_1] \| \leq \epsilon$, return $w_1$. Go to step 4 otherwise.
    
    \item[4.] Find $v_1 \in X$ such that
    \begin{equation}\label{e:algo_step4}
        I^\prime[w_1] v_1 < 0.
    \end{equation}

    \item[5.] Set $\tilde{w_1} = w_1 + \delta v_1$.
    
    \item[6.] While $I[t^*_{\tilde{w_1}} \tilde{w_1}] \geq I[w_1]$, set $v_1=v_1/2$ and return to Step 5. Go to Step 7 otherwise.
    
    \item[7.] Set $w_1=t^*_{\tilde{w_1}} \tilde{w_1}$ and return to Step 3.
\end{itemize}

The following subsection provides more information on how to proceed through Steps 3 and 4  which require us to evaluate $\|I^\prime[w_1]\|$ and find $v_1\in X$ satisfying \eqref{e:algo_step4}.

\begin{remark}\label{remark:algo_step2}
In Step 2, the value of $t^*_u$ depends on the choice of nonlinearity $f(x,u)$ so no general formula is provided. In Section \ref{s:numerics_results}, we consider different functions $f$ to perform numerical tests and give an explicit formula for $t^*_u$ in each case.
\end{remark}

\begin{remark}\label{remark:algo_step6}
In Step 6, we note that the value of $I(t^*_{\tilde{w_1}} \tilde{w_1})$ is guaranteed to decrease below $I(w_1)$ as we decrease the factor $\delta$ multiplying $v_1$. This is a consequence of the definition of $I^\prime$. Indeed, we have
$$\lim_{\delta\rightarrow 0} \frac{I[w_1 + \delta v_1] - I[w_1]}{\delta} = I^\prime[w_1] v_1 < 0.$$
Therefore, for $\delta>0$ small enough we get $I[w_1 + \delta v_1] - I[w_1] < 0$.
\end{remark}

%%%%%%%%
\subsection{Implementation Details}

We focus here on the Dirichlet nonlocal problem. The results presented here can be extended to the Neumann problem by replacing $X_D^1$ with $X_N^1$ and $I_D$ with $I_N$. 

Let $w_1\in X_D^1$ such that $I_D^\prime[w_1]\neq 0$. Consider the problem: find $b \in X_D^1$ such that the following holds for all $v\in X_D^1$
\begin{equation}\label{e:num_pb_b}
%10^{-10} b    \int_\Omega \nabla b \cdot \nabla v = I_D^\prime[w_1]v .
    B_D[b,v] \ = \ I_D^\prime[w_1]v,
\end{equation}
where $I^\prime_D[w_1] \in \mathcal{L}(X_D^1,\mathbb{R})$ is defined by
$$I^\prime_D[w_1]v 
\ = \  B_D[w_1,v] - \int_\Omega f(x,w_1)v \;dx.$$

\begin{lemma}\label{lemma:num_algo_details}
Let $b$ be the solution of the problem \ref{e:num_pb_b} with $w_1\in X_D^1$ such that $I^\prime[w_1]\neq 0$.
Then, we have
%$$C_P \| b\|_{X_D^1} \leq \|I_D^\prime[w_1]\| \leq \| b\|_{X_D^1},$$
\begin{equation}\label{lemma:num_bound_Iprime}
\beta_2 \| b\|_{H^1(\Omega)} \ \leq \ \|I_D^\prime[w_1]\| \ \leq \ C_2 \| b\|_{H^1(\Omega)},
\end{equation}
and
$$ I_D^\prime[w_1] \left(\frac{-b}{\|b\|_{X_D}}\right) \ < \ 0,$$
where the constants $C_2$ and $\beta_2$ are introduced in Proposition \ref{p:bilinear_general}.
\end{lemma}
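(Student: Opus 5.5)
The plan is to use the defining relation \eqref{e:num_pb_b}, $B_D[b,v]=I_D^\prime[w_1]v$ for all $v\in X_D^1$, together with the bounds \ref{T3} and \ref{T4} of Proposition \ref{p:bilinear_general}. Note first that $b$ exists and is unique. The right-hand side $v\mapsto I_D^\prime[w_1]v$ is a bounded linear functional on $X_D^1$: the term $B_D[w_1,v]$ is bounded by \ref{T3}, and the term $\int_\Omega f(x,w_1)v\,dx$ is bounded by the growth condition \ref{A2} and the embedding $H^1(\Omega)\subset L^{2\al}(\Omega)$. Since $B_D$ is bounded, coercive and symmetric on $X_D^1$, the Lax--Milgram theorem gives a unique solution $b$. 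Moreover $b\neq 0$, because $I_D^\prime[w_1]\neq 0$.

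For the upper bound, take any $v\in X_D^1$ with $\|v\|_{H^1(\Omega)}\le 1$. By \ref{T3},
\[ |I_D^\prime[w_1]v| \ = \ |B_D[b,v]| \ \le \ C_2\|b\|_{H^1(\Omega)}\|v\|_{H^1(\Omega)}. \]
Taking the supremum over such $v$ gives $\|I_D^\prime[w_1]\|\le C_2\|b\|_{H^1(\Omega)}$. For the lower bound, test with $v=b/\|b\|_{H^1(\Omega)}$, which has unit norm. By \ref{T4},
\[ \|I_D^\prime[w_1]\| \ \ge \ I_D^\prime[w_1]\Big(\frac{b}{\|b\|_{H^1(\Omega)}}\Big) \ = \ \frac{B_D[b,b]}{\|b\|_{H^1(\Omega)}} \ \ge \ \beta_2\|b\|_{H^1(\Omega)}. \]
Together these give \eqref{lemma:num_bound_Iprime}.

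For the descent direction, linearity of $I_D^\prime[w_1]$ and \ref{T4} give
\[ I_D^\prime[w_1]\Big(\frac{-b}{\|b\|_{X_D}}\Big) \ = \ -\frac{B_D[b,b]}{\|b\|_{X_D}} \ \le \ -\beta_2\frac{\|b\|^2_{H^1(\Omega)}}{\|b\|_{X_D}} \ < \ 0. \]
The strict inequality holds because $b\neq0$. Here $\|\cdot\|_{X_D}$ is interpreted as the $H^1(\Omega)$ norm, which is how $X_D^1$ is normed.

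I expect no serious obstacle. The only delicate point is making sure $b$ is well defined, which is the Lax--Milgram step above, and this relies on the boundedness of the nonlinear term via \ref{A2}.
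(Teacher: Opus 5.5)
Your proposal is correct and matches the paper's proof essentially step for step: \ref{T3} gives the upper bound, \ref{T4} tested at $b/\|b\|_{H^1(\Omega)}$ gives the lower bound, and $b\neq 0$ (forced by $I_D^\prime[w_1]\neq 0$) gives the strict descent inequality. Your Lax--Milgram argument for the existence of $b$, which uses \ref{A2} and the embedding $H^1(\Omega)\subset L^{2\alpha}(\Omega)$ to bound the nonlinear term, is a small addition that the paper takes for granted.
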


\begin{proof}
We recall that $X_D^1$ is equipped with the $H^1$ norm. Therefore, we have
$$\|I_D^\prime[w_1]\| \ = \ \sup_{\substack{v \in X_D^1,\\ \|v\|_{H^1(\Omega)} =1}} |I_D^\prime[w_1] v|.$$
Let $v\in X_D^1$ be such that $\|v\|_{H^1(\Omega)}=1$. Using the property \ref{T3}, we get
$$ |I_D^\prime[w_1] v|
\ = \ \left| B[b,v] \right|
\ \leq \ C_2 \| b \|_{H^1(\Omega)} \| v\|_{H^1(\Omega)}
\ = \ C_2 \| b \|_{H^1(\Omega)}
,$$
which yields the upper bound of \ref{lemma:num_bound_Iprime}.

The lower bound results from applying the property \ref{T4} with $v=\frac{b}{\|b\|_{H^1(\Omega)}}$. It reads
$$ \| I_D^\prime[w_1] \| 
\ \geq \ \left| I_D^\prime[w_1] \frac{b}{\|b\|_{H^1(\Omega)}} \right| 
\ = \ \left| B\left[ b, \frac{b}{\|b\|_{H^1(\Omega)}}\right] \right|
\ = \ \frac{| B[b,b] |}{\| b\|_{H^1(\Omega)} }
\ \geq \ \beta_2 \| b\|_{H^1(\Omega)}.
$$
To prove the second inequality, we write
$$I_D^\prime[w_1] \left( \frac{-b}{\|b\|_{H^1(\Omega)}}\right)
%\ = \ - B[ b, \frac{b}{\|b\|_{H^1(\Omega)}}]
\ = \ - \frac{ B[b, b] }{\|b\|_{H^1(\Omega)}} \leq 0.
$$
We note that the inequality is strict as $B[b,b]$ being zero implies that $b$, and so $I_D^\prime[w_1]$, are zero leading to a contradiction.
\end{proof}

Note that Lemma \ref{lemma:num_algo_details} first allows us to get an upper bound for $\|I_D^\prime[w_1]\|$ which we use to check if $\|I_D^\prime[w_1]\| \leq \epsilon$ in step 3 of the above algorithm. For numerical validation, see following section, we will use the following stopping criterion in step 3
$$\|b\|_{H^1(\Omega)} \leq \epsilon,$$
where we dropped the constant $C_2$ from the upper bound \ref{lemma:num_bound_Iprime}.
Moreover, the above Lemma also provides us with a definition for $v_1$ in step 4 of the algorithm by setting
$$v_1 = \frac{-b}{\|b\|_{H^1(\Omega)}},$$
with $b$ being the weak solution of the problem \ref{e:num_pb_b}.

%%%%%%%%
\subsection{Numerical illustrations}\label{s:numerics_results}
We validate the algorithm described in Section \ref{s:numerics_algo} by considering various setups in which we impose homogeneous nonlocal Dirichlet or nonlocal Neumann boundary conditions. We consider three types of kernels that are either algebraically or exponentially decaying. The solution $b$ of the problem \ref{e:num_pb_b} is approximated using a second order finite element method where the basis functions are defined as piecewise linear Lagrangian polynomials.

% In all the tests reported in this section, we set the domain $\Omega$ to $(-\pi,\pi)$, the tolerance $\epsilon$ to $10^{-3}$ and the step $\delta$ of the gradient descent algorithm, used in step 5, to one. Finally, the initial guess is always set to $u_1(x)=\sin(x)$.

For a given mesh size $h > 0$, used to approximate the solution of \eqref{e:num_pb_b} with piecewise linear finite elements, we denote by $u_h^*$ the approximation of the solution of problem \eqref{e:nonlocal_eq} generated by our algorithm. As exact solutions are not known, we confirm the convergence of our algorithm by computing the norms $L^1$ and $L^2$ of the residual of \eqref{e:nonlocal_eq} defined by:
$$
\begin{array}{lll}
  R_{L^1}   & \ := \ & \int_\Omega |-\mathcal{L} u_h^*(x) - f(x,u_h^*)| dx ,\\
  R_{L^2}   & \ := \ & \left( \int_\Omega (-\mathcal{L} u_h^*(x) - f(x,u_h^*))^2 dx \right)^{\frac{1}{2}} .
\end{array}
$$
To further validate our algorithm, we use piecewise linear finite elements to approximate the solution $\bar{u}$ of the problem
\begin{equation}\label{e:linear_pb_cvg}
-\mathcal{L} \bar{u}  \ = \   f(x,u^*)  \quad  x \in  \Omega,
\end{equation}
supplemented with the same nonlocal boundary conditions used to obtain $u^*$.
For a given mesh size $h$, we can then compute the errors in $L^1$ and $L^2$ norms between $u_h^*$ and $\bar{u}_h$ given by
$$E_{L^1} \ = \ \left( \int_\Omega |u_h^* - \bar{u}_h | dx \right),$$
$$E_{L^2} \ = \ \left( \int_\Omega (u_h^* - \bar{u}_h )^2 dx \right)^{\frac{1}{2}}.$$

\begin{remark}\label{remark:algo_limitation} One could suggest using a classic Newton iterative solver to obtain an approximation of the solution to the nonlocal problem \ref{e:nonlocal_eq}. However, applying such a strategy to the examples considered in this section was shown to lead to solutions that do not
%approximation that does not 
correctly approximate the equation. In particular, the residuals $R_{L^1}$-$R_{L^2}$ and errors $E_{L^1}$-$E_{L^2}$ are large and do not decrease with the mesh size. Therefore, developing algorithms like the one proposed here is essential to solve the problem \ref{e:nonlocal_eq} and could also be used to provide adequate initial guesses for Newton solvers. We will explore this direction in the future.
\end{remark}

% \begin{remark}
%     We note that the setups considered in this section all involve nonlinearity $f(x,t)$ that are odd in $t$. This restriction is a consequence of hypothesis \ref{A4}. Numerical results confirmed that the proposed algorithm does not converge if the nonlinearity has even part in $t$. On the other hand, one test described in section \ref{sec:num_case_4} shows that not satisfying the hypothesis \ref{A3} can still lead to a convergent algorithm.
% \end{remark}

%%%%
\subsubsection{Case 1. Positive exponentially decaying kernel and $f(x,t) = t^3$.} \label{sec:num_case_1}
We set the domain $\Omega$ to $(-\pi,\pi)$ and supplement the problem \ref{e:nonlocal_eq} with homogeneous nonlocal Dirichlet boundary conditions. The initial guess is $u_1(x)=\sin(x)$.
We note that the function $f(x,t)=t^3$ satisfies the hypotheses \ref{A3} and\ref{A5}. Furthermore, hypothesis \ref{A2} is satisfied for any $a_1>0$ with $a_2=1$ and $\alpha=3$, and hypothesis \ref{A4} is satisfied for any $\mu\in(2,4)$ with $\theta=1$. 
Similarly to the proof of \cite[Lemma 4]{bailova2021mountain}, a solution $t^*_u$ of $I(t^*_u u)  = \max_{t\geq 0} I(tu)$ is given by
$$t^*_u \ = \ \left( \frac{B[u,u]}{\int_\Omega u^4} \right)^{\frac{1}{2}}.$$
We consider an exponentially decaying kernel defined by
$$ \gamma(x) \ = \ \frac{1}{2} exp(-|x|).$$
% $$
% \begin{array}{ccl}
%  \gamma(x) \ = \    &  \gamma_1(x) \ = \ & \frac{1}{2} exp(-|x|),  \\
%  \gamma(x) \ = \   &  \gamma_2(x) \ = \ & \frac{1}{\sqrt{\pi}} exp(-x^2). 
% \end{array}
% $$
We run our algorithm with five different mesh sizes $h\in \{0.314, 0.157, 0.078, 0.039, 0.019\}$. The $L^1$-$L^2$ residuals and errors $E_1$-$E_2$ are shown in Table \ref{tab:num_case_1a}. The table also displays the number of degree of freedom, $n_{dof}$, the number of iterations, $it$, required for our algorithm to converge, and the run time $t_{run}$ in seconds. Our results confirm the correct behavior of the proposed algorithm, as the residuals and errors converge with orders close to one in $L^1$ and one-half in $L^2$.
\begin{table}[H]
    \centering
    \caption{Convergence results for case 1 with $\epsilon=10^{-3}$, $\delta=1$, $u_1(x)=\sin(x)$.}%Running time $t_{run}$ is displayed in seconds.}
    \begin{tabular}{|c|c||c|c||c|c||c|c|}
    \hline
    h & $n_{dof}$ & $R_{L^1}$ & $R_{L^2} $  & $E_{L^1}$ & $E_{L^2}$ & it & $t_{run}$ (s)
    \\ \hline
    %0.628 & 10 & 0.05480320 & 0.03997793 & 0.66911377 & 0.35320342 &  11 & 0.13
    %\\ \hline
    0.314 & 20 & 0.04657529 &0.04744037  & 0.45900719 & 0.26293927 & 14 & 0.40
    \\ \hline
    0.157 & 40 & 0.03240179 & 0.04681579 & 0.27046985 & 0.17679683 & 15  &  1.38
    \\ \hline
    0.078 & 80 & 0.01940995 & 0.03936870 & 0.14647999 & 0.11585964 &  19 & 5.93
    \\ \hline
    0.039 & 160 & 0.01045204 & 0.03033283 & 0.07772337 & 0.07708336 & 24 & 26.4  
    \\ \hline
    0.019 & 320 & 0.00550846 & 0.02237211 & 0.04053661 &0.05238792  & 32 & 111 
    \\ \hline  
    \end{tabular}
    \label{tab:num_case_1a}
\end{table}

% \begin{table}[H]
%     \centering
%     \caption{Convergence results for case 1 with kernel $\gamma_2(x)$, $\epsilon=10^{-3}$, $\delta=1$, $u_1(x)=\sin(x)$.}%Running time $t_{run}$ is displayed in seconds.}
%     \begin{tabular}{|c||c|c||c|c||c|c|}
%     \hline
%     h & $R_{L^1}$ & $R_{L^2} $  & $E_{L^1}$ & $E_{L^2}$ & it & $t_{run}$ (s)
%     \\ \hline
%     0.1 & 0.00284266 & 0.00176542 & 0.74537838 & 0.34696443  & 8 & 0.20
%     \\ \hline
%     0.05 & 0.00069555 & 0.00061655 & 0.74429382 & 0.34673014 & 9 & 0.39
%     \\ \hline
%     0.025 & 0.00036835 & 0.00024027 & 0.74427290 & 0.34667302 & 12 & 1.94  
%     \\ \hline
%     0.0125 & 0.00023057 & 0.00013912 &0.74384627 & 0.34662730 & 13 & 8.22
%     \\ \hline
%     0.00625 &0.00005497 &0.00003815 & 0.74404443 & 0.34664083 & 14 & 36.9  
%     \\ \hline
%     0.003125 &0.00009260 &0.00005973 & 0.74408141 & 0.34664361 & 14 &  141
%     \\ \hline  
%     \end{tabular}
%     \label{tab:num_case_1b}
% \end{table}

%%%%
\subsubsection{Case 2. Inverted Mexican Hat kernel and $f(x,t)=t^3$.} \label{sec:num_case_2}
We now change the kernel to an Inverted Mexican Hat kernel introduced in Section \ref{Subsec: kernelEx}. We set $(a,b,A,B): =(1,1,1,2)$ so the kernel is defined as
$$\gamma(x) = \frac{1}{\pi}\left( exp(-x^2/4) - exp(-x^2) \right).$$
The domain $\Omega$, type of nonlocal boundary conditions, and initial guess $u_1$ remain unchanged.
While the bilinear form $B[u,u]$ now involves the above kernel and is therefore different than in the previous test, the formula for $t^*_u$ remains unchanged.
As in the previous section, we run a series of five tests with mesh sizes $h$ ranging from $0.314$ to $0.019$. The quantities $R_{L^1}, R_{L^2}, E_{L^1}, E_{L^2}$ are shown in Table \ref{tab:num_case_2} and recover a similar convergence rate than in the previous setup (i.e. order one in $L^1$ and half in $L^2$). 
We note that algorithm shows a clear jump in convergence when the mesh size reaches $0.019$. This effect is due to the algorithm converging to a solution close to zero, i.e., the trivial solution. We note that such behavior is expected and depends mostly on the initial guess $u_1$. For instance, if $u_1$ has a very small norm in $X_D^1$, it is guaranteed that the algorithm will converge to the trivial solution. 

%\LC{For this test, we have a super convergence for the finest mesh because we are converging to a constant close to zero. To discuss if we keep it (and add discussion about possibility of converging to trivial solution) or not. I am inclined to keep it and admit that algorithm can converge to trivial solution (depends on initial guess, for instance if initial guess has a very small norm the algorithm will converge to zero immediately).}

\begin{table}[H]
    \centering
    \caption{Convergence results for case 2 with $\epsilon=10^{-3}$, $\delta=1$, $u_1(x)=\sin(x)$.}%Running time $t_{run}$ is displayed in seconds.}
    \begin{tabular}{|c|c||c|c||c|c||c|c|}
    \hline
    h & $n_{dof}$ & $R_{L^1}$ & $R_{L^2} $  & $E_{L^1}$ & $E_{L^2}$ & it & $t_{run}$ (s)
    \\ \hline
    %0.628 & 10 & 0.14500949 & 0.10418775 & 0.49428531 & 0.31372224 & 13 & 0.24
    %\\ \hline
    0.314 & 20 & 0.24209273 & 0.17827050 & 0.44302720 & 0.26543935 & 13 & 0.63
    \\ \hline
    0.157 & 40 & 0.13054000 & 0.13490316 & 0.23372808 & 0.17195871 & 17 & 2.27
    \\ \hline
    0.078 & 80 & 0.07054912 & 0.10298292 & 0.12184988 & 0.11134002 & 24 & 12.7
    \\ \hline
    0.039 & 160 & 0.03505767 &0.07210115 & 0.06295756 &  0.07804388& 35 & 69.7    
    \\ \hline
    0.019 & 320 & 0.00000441 & 0.00000262 & 0.00660828 &0.00263632 & 80 &  572
    \\ \hline  
    \end{tabular}
    \label{tab:num_case_2}
\end{table}

\subsubsection{Case 3. Positive exponentially decaying kernel with $f(x,t)=t^5$.}\label{sec:num_case_3}
For this series of tests, we consider an exponentially decaying kernel
$$\gamma(x) = \frac{1}{\sqrt{\pi}} exp(-x^2). $$
The domain, boundary conditions and initial guess are again unchanged.
The function $f(x,t)$ satisfies the hypotheses \ref{A3}-\ref{A5}. Using  $(a_1,a_2,\alpha)=(1,1,5)$, one can show that hypothesis \ref{A2} is also satisfied. Similarly, hypothesis \ref{A4} holds for $\mu\in(2,6)$ and $\theta=1$.
A solution $t^*_u$ of $I[t^*_u u]  = \max_{t\geq 0} I[tu]$ is given by
$$t^*_u = \left( \frac{B[u,u]}{\int_\Omega u^6} \right)^{\frac{1}{4}}.$$
We perform a series of five tests where the mesh size ranges from $0.314$ to $0.019$ and display our results in Table \ref{tab:num_case_3b}. While the $L^1$-$L^2$ residual shows a small convergence rate, between $[0.5,1]$ in $L^1$ and smaller than one-half in $L^2$, the $L^1$-$L^2$ errors show a convergence rate closer to one. We note that, as in the previous test, a super convergence effect appears for the finest mesh where the algorithm recovers an approximation close to the trivial solution. Moreover, this behavior is prevalent for multiple distinct choices for $\ga$.
\begin{table}[H]
    \centering
    \caption{Convergence results for case 3 with $\epsilon=10^{-3}$, $\delta=1$, $u_1(x)=\sin(x)$.}%Running time $t_{run}$ is displayed in seconds.}
    \begin{tabular}{|c|c||c|c||c|c||c|c|}
    \hline
    h & $n_{dof}$ & $R_{L^1}$ & $R_{L^2} $  & $E_{L^1}$ & $E_{L^2}$ & it & $t_{run}$ (s)
    \\ \hline
    %0.628 & 10 & 0.03012309 & 0.02147429 &  1.35394427 &0.63626496  & 10 & 0.53
    %\\ \hline
    0.314 & 20 & 0.03925108 & 0.03981134 & 1.10925082 &0.52974364  &12 & 0.91
    \\ \hline
    0.157 & 40 & 0.03793965 & 0.05461015 & 0.73062823  &0.35943050 & 17 & 3.04    
    \\ \hline
    0.078 & 80 & 0.02548352 & 0.05187136 & 0.42814897 & 0.22118707 & 17 & 8.51
    \\ \hline
    0.039 & 160 &0.01446269 & 0.04206778 & 0.23404745 & 0.13131264 & 22 & 35.7 
    \\ \hline
    0.019 & 320 & 0.00006810 &0.00003166  & 0.16800524 & 0.06702448 & 56 & 297 
    \\ \hline  
    \end{tabular}
    \label{tab:num_case_3b}
\end{table}

%%%%
\subsubsection{Case 4. Positive exponentially decaying kernel with $f(x,t)=t^3-t$.}\label{sec:num_case_4}
For this test, we consider the same setup as in the previous section, but modify the function $f(x,t)=t^3-t$ so that it no longer satisfies hypothesis \ref{A3}. Indeed, one can show that $\lim_{t \rightarrow 0}\frac{f(x, t)}{t} = -1 \neq 0$. 
% The domain is kept to $\Omega=(-\pi,\pi)$ and the initial guess to $u_1(x)=\sin(x)$. The kernel $\gamma$ is given by
% $$\gamma(x) \ = \ \frac{1}{\sqrt{\pi}} exp(-x^2). $$
We note that the function $f(x,t)$ still satisfies hypothesis \ref{A5} and also \ref{A2} by setting $(a_1,a_2,\alpha)=(1,2,3)$. Moreover, hypothesis \ref{A4} holds for $\mu\in(2,4)$ and $\theta=1$. 
A solution $t^*_u$ of $I[t^*_u u]  = \max_{t\geq 0} I[tu]$ is given by
$$t^*_u \ = \ \left( \frac{B[u,u] + \int_\Omega u^2}{\int_\Omega u^4} \right)^{\frac{1}{2}}.$$

We perform a series of five tests with different mesh sizes and display our results in Table \ref{tab:num_case_4}. We note that while the errors $E_{L^1}-E_{L^2}$ both converge with order one with respect to the mesh size $h$, the residual errors show a rate of convergence closer to half on average.
\begin{table}[H]
    \centering
    \caption{Convergence results for case 4 with $\epsilon=10^{-3}$, $\delta=1$, $u_1(x)=\sin(x)$.}%Running time $t_{run}$ is displayed in seconds.}
    \begin{tabular}{|c|c||c|c||c|c||c|c|}
    \hline
    h & $n_{dof}$ & $R_{L^1}$ & $R_{L^2} $  & $E_{L^1}$ & $E_{L^2}$ & it & $t_{run}$ (s)
    \\ \hline
    %0.628 & 10 & 1.37161091 & 0.74096033 & 0.87050177 &0.38721339& 12 & 0.51
    %\\ \hline
    0.314 & 20 &1.01178678 & 0.74065266 & 0.30090628 & 0.13297895 & 18 & 1.21
    \\ \hline
    0.157 & 40 & 0.60279548 & 0.62296971 & 0.16107430 & 0.07149136 & 26 &  4.00
    \\ \hline
    0.078 & 80 & 0.16438354 & 0.33952388 &0.10686057 &0.04704756 & 84 & 56.6
    \\ \hline
    0.039 & 160 & 0.17161320 & 0.35436100 &0.04594508 & 0.02103514& 44 & 57.4     
    \\ \hline
    0.019 & 320 & 0.08772507 & 0.25600842 &0.02338767 & 0.01110279& 67 & 334 
    \\ \hline  
    \end{tabular}
    \label{tab:num_case_4}
\end{table}

\subsubsection{Case 5. Homogeneous nonlocal Neumann boundary conditions and $f(x,t)=0.5(-u - 3 u^2 + 4 u^3)$.}

We conclude our numerical investigations by considering a problem on the domain $\Omega=(0,3)$ with homogeneous Neumann nonlocal boundary constraints. Compared to the previous example, the nonlinearity is modified once more to $f(x,t)=0.5(-u - 3 u^2 + 4 u^3)$ so it now includes a quadratic term. As a result, neither hypothesis \ref{A3} nor \ref{A4} is satisfied. We note that hypotheses \ref{A2} and \ref{A5} still hold for this choice of $f$. The kernel is set to
$$ \gamma(x) \ = \ \frac{1}{2} exp(-|x|).$$
Unlike our previous setup, we modify the initial guess $u_1$ to a step function defined by
$$
u_1(x) =
\begin{cases}
0 \ \text{ if } \ 0 \leq x < 1, \\
1 \ \text{ if } \ 1 \leq x < 2, \\
0 \ \text{ if } \ 2 \leq x \leq 3 . 
\end{cases}
$$
This setup is motivated by our interest in the generation of steady pulse solutions in Allen-Cahn equations or FitzHugh-Nagumo-type equations where the local diffusion operator $-\Delta u$ is replaced by the operator $-\mathcal{L}u$.

We run a series of five tests with mesh sizes $h$ ranging from $0.3$ to $0.009375$. Our results are displayed in Table \ref{tab:num_case_5} and confirm that the residuals and errors converge in both $L^1$ and $L^2$ norms.
We note that computational times $t_{run}$ are slightly larger than in the previous sections. This is due to the use of Neumann nonlocal boundary constraints that lead us to extend the computational domain to $(-1.5,4.5)$. Therefore, the effective degrees of freedom is increased by a factor of two when compared to the previous setups considered in this section. We refer to \cite{cappanera2024analysis} for more details on the numerical approximation of the problem \ref{e:nonlocal_eq} with nonlocal Neumann boundary constraint.

\begin{table}[H]
    \centering
    \caption{Convergence results for case 5 with $\epsilon=10^{-3}$, $\delta=1$.}%Running time $t_{run}$ is displayed in seconds.}
    \begin{tabular}{|c|c||c|c||c|c||c|c|}
    \hline
    h & $n_{dof}$ & $R_{L^1}$ & $R_{L^2} $  & $E_{L^1}$ & $E_{L^2}$ & it & $t_{run}$ (s)
    \\ \hline
    %0.3 &20 & 0.53650741 & 0.56569346 & 0.48413759 & 0.20363992 & 13 & 0.5
    %\\ \hline
    0.15 & 40 & 0.29477230 & 0.43664263 &0.25616048  & 0.10894473 & 28 & 3.00
    \\ \hline
    0.075 & 80 & 0.15175489 & 0.12040370 & 0.12040370 & 0.12040370 & 53 &  26.5   
    \\ \hline
    0.0375 &160 & 0.03090516 & 0.07208059 & 0.07688191 &  0.03266330 & 55 & 48.7
    \\ \hline
    0.01875 & 320 & 0.01663096 & 0.05488005 & 0.03966971 & 0.01725178 & 75 & 250    
    \\ \hline
    0.009375 & 640 & 0.00863797 & 0.04027057 &0.02010100& 0.00912427 & 108 & 1415 
    \\ \hline  
    \end{tabular}
    \label{tab:num_case_5}
\end{table}

\section{Conclusion}\label{s:conclusion}

This paper proved a multiplicity result for solutions to equations of the form $-\Lin u = f(x, u)$ where $\Lin$ is a convolution operator associated with radially symmetric $L^1$ kernels with finite second moment,
%exponentially decaying kernel,
 and with either a nonlocal Dirichlet or a nonlocal Neumann boundary constraint. We also established numerically that a gradient descent-type algorithm tailored to energy functionals with a mountain pass structure allows for convergence even when standard Newton-type algorithms perform poorly. In the future, we plan to study extensively this algorithm by comparing results obtained with our gradient descent algorithm versus the stationary solution obtained from solving equations, like Allen-Cahn, over long time integration.
 
 %\GJ{We did not do Neumann conditions for the numerical tests, maybe explain here why this is ok}.

%While the motivation for this paper came from various applications, we purposefully did not pursue any of them as test cases for our numerical scheme, leaving this for future work. In this paper we mainly wanted to focus on the proof existence of non-trivial solutions. In addition, this choice comes from the added restrictions on the nonlinearity, which narrow the type of problems that it can handle. For instance, most analysis of population and vegetation models focus on proving existence of invasion fronts. This requires nonlinearities, or reaction terms, that support heteroclinic connections between two fixed points. In the scalar case, the simlest case would be that corresponding ot a Fisehr KKP type of equation, whose nonlinear terms need to be cubic.

Numerous continuations to this project are readily available: for one, one can seek similar multiplicity results for equations with the same underlying operator but different geometric behavior, such as Linking-type energies (see \cite[Chapter 8]{struwe2000variational} for an overview of the Linking Theorem); semilinear problems with the same convolution structure also remain open. In addition, we will develop analytical and numerical results for optimal control problems with a constraint equation of type \eqref{e:nonlocal_operator}, using \cite{siktar2024superlinear} as a guide for the analytic framework.

%%%%%%%%%%%%%%%%%%%%%%%%%%%%%%%%%%%%%%%%%%

\appendix

\section{Appendix}\label{Sec: appendix}

\subsection{Coercivity-type estimates for positive kernels}\label{Subsec: posKer}

\begin{lemma}\label{Lem: AppendixPoincare}
Consider the bilinear form $B_N: X^1_N \times X^1_N \longrightarrow \R$ as given by Definition  \ref{d:bilinearform_N}. Assume further that the kernel $\gamma$ given in the definition of $B_N$ is strictly positive. Then, there exists a positive constant $\beta$, such that
\begin{equation}\label{e:poincare_2}
 \| u\|^2_{L^2(\R^n)} \ \leq \ \beta  B_N[u,u].
 \end{equation}
\end{lemma}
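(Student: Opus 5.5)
Since $u \in X^1_N$ satisfies $\mathcal{L}u = 0$ on $\Omega^c$, I want to use this exterior condition to control $u$ on $\Omega^c$ by $u$ on $\Omega$, and then control $u$ on $\Omega$ by the energy $B_N[u,u]$. There is one structural obstruction to keep in view. Constants are annihilated by $\mathcal{L}$, and $B_N[c,c]=0$. However, a nonzero constant is not in $L^2(\R^n)$, hence not in $X^1_N$, so the inequality is not contradicted. Still, any argument must use the decay/integrability of $u$ on the unbounded set $\Omega^c$ in an essential way. The plan is therefore a contradiction argument combined with a quantitative exterior estimate.

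\emph{Step 1 (exterior control).} Writing $\mathcal{L}u = \gamma * u - u$ (using $\int\gamma=1$), the condition $\mathcal{L}u=0$ on $\Omega^c$ reads $u = \gamma * u$ there. I will show that $\|u\|_{L^2(\Omega^c)} \le C\|u\|_{L^2(\Omega)}$. This is the content of Lemma \ref{l:equivalent_norms}, i.e.\ unique solvability of the exterior problem \eqref{e:exterior_problem}. To prove it I would split $u = u\chi_\Omega + u\chi_{\Omega^c}$ and consider the operator $T v = \chi_{\Omega^c}(\gamma * (\chi_{\Omega^c} v))$ on $L^2(\Omega^c)$. The equation becomes $(I-T)u|_{\Omega^c} = \chi_{\Omega^c}(\gamma*(u\chi_\Omega))$. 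Positivity of $\gamma$ and $\|\gamma\|_{L^1}=1$ give $\|T\|\le 1$. Then $I-T$ is injective, since $Tv=v$ with $v\in L^2$ would force (via the strict inequality in Young's inequality for a strictly positive kernel hitting the indicator of the nonempty set $\Omega$) $v=0$. I also need $I-T$ to have closed range. I would obtain this from a coercivity estimate
\[ ((I-T)v,v) \ = \ \tfrac12\iint_{\Omega^c\times\Omega^c}\gamma (v(x)-v(y))^2 + \int_{\Omega^c} v^2\,(\gamma*\chi_\Omega)\,dx. \]

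\emph{Step 2 (interior control by contradiction).} Suppose the inequality fails. Then there are $u_k \in X^1_N$ with $\|u_k\|_{L^2(\R^n)}=1$ and $B_N[u_k,u_k]\to 0$. By Step 1, $\|u_k\|_{L^2(\Omega)}$ is bounded below. Since $\gamma>0$ is bounded below on compact sets, $B_N[u_k,u_k]\to0$ forces $\iint_{\Omega'\times\Omega'}(u_k(x)-u_k(y))^2\,dx\,dy \to 0$ on any bounded $\Omega'\supset\Omega$. Hence $u_k - \bar u_k \to 0$ in $L^2(\Omega')$, where $\bar u_k$ is the average over $\Omega'$. The averages are bounded, so along a subsequence $u_k\to c$ in $L^2_{loc}$ for a constant $c$. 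From $\|u_k\|_{L^2(\Omega)}\gtrsim 1$ we get $c\neq 0$.

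\emph{Step 3 (the main obstacle).} It remains to rule out $u_k\to c\neq0$ locally while $\|u_k\|_{L^2(\R^n)}=1$. Taking $\Omega'$ with $|\Omega'|>1/c^2$ immediately gives $\|u_k\|^2_{L^2(\Omega')}\to c^2|\Omega'|>1$, a contradiction. This works because $\Omega'$ is arbitrary. It requires Step 2 to hold for \emph{every} bounded $\Omega'$, and it also uses Step 1 only to ensure $c\ne0$. The delicate point is thus Step 1 together with the uniformity in Step 2, namely the positivity lower bound of $\gamma$ on large balls. I expect Step 1 (closed range/invertibility of $I-T$ on the unbounded exterior) to be the hardest part. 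I would first try to deduce it from the positivity of $\gamma*\chi_\Omega$ near $\Omega$ together with a Neumann-series-type argument. If that fails, I would instead rely directly on Lemma \ref{l:equivalent_norms} as stated earlier in the paper.
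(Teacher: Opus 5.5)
Your Steps 2 and 3 are fine: once $\|u_k\|_{L^2(\Omega)}$ is bounded below, local positivity of $\gamma$ gives $u_k\to c\neq 0$ in $L^2_{\mathrm{loc}}$, and a ball $\Omega'$ with $c^2|\Omega'|>1$ gives the contradiction. The genuine gap is Step 1, which carries the whole difficulty, and your route to it fails.

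The form $((I-T)v,v)$ you display is not bounded below by $c\|v\|^2_{L^2(\Omega^c)}$, because $\Omega^c$ contains arbitrarily large balls arbitrarily far from $\Omega$. Take $v_R(x)=R^{-n/2}\phi((x-x_R)/R)$ with $\phi\in C_c^\infty(B_1)$ and $\mathrm{dist}(x_R,\Omega)\ge R^2$. The double integral is at most $\frac12 R^{-2}\|\nabla\phi\|^2_{L^2}\int|z|^2\gamma(z)\,dz$. The weight term is at most $\|\phi\|^2_{L^2}\int_{|z|\ge R^2-R}\gamma(z)\,dz$. Both tend to $0$. Hence $1\in\sigma(T)$, no Neumann series is available, and the injective self-adjoint operator $I-T$ has dense but non-closed range.

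It does not even reach the right-hand sides $\chi_{\Omega^c}\,\gamma*(g\chi_\Omega)$ for general $g$. For $n=1$ and $\gamma=\frac12e^{-|z|}$ we have $(1-\partial_x^2)(\gamma*u)=u$, so $\gamma*u=u$ on $\Omega^c$ gives $u''=0$ there. An $L^2$ solution therefore vanishes on $\Omega^c$, and then $\gamma*(g\chi_\Omega)=0$ on $\Omega^c$ forces $\int_\Omega e^{\pm y}g(y)\,dy=0$. So Step 1 can at best be an a priori estimate on the constrained class, and your sketch supplies no mechanism for it.

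Falling back on Lemma \ref{l:equivalent_norms} is circular: the paper obtains that lemma from \eqref{e:poincare_2} itself. In fact Step 1 is equivalent to the statement. If $\|u\|^2_{L^2(\R^n)}\le\beta B_N[u,u]$, then, because $\mathcal{L}u=0$ on $\Omega^c$, $B_N[u,u]=(-\mathcal{L}u,u)_{L^2(\Omega)}\le 2\|u\|_{L^2(\R^n)}\|u\|_{L^2(\Omega)}$, so $\|u\|_{L^2(\R^n)}\le 2\beta\|u\|_{L^2(\Omega)}$. Your Steps 2--3 therefore only turn an equivalent form of the lemma back into the lemma.

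The paper's own argument does not supply the missing ingredient either. After obtaining the weak limit $\bar u=0$, it asserts $\gamma*u_k\to 0$ in $L^2(\R^n)$, which fails for exactly such spreading sequences. It also never invokes $\mathcal{L}u_k=0$ on $\Omega^c$, and without that constraint the inequality is false: $u_R=R^{-n/2}\phi(x/R)$ has $B_N[u_R,u_R]=O(R^{-2})$ while $\|u_R\|_{L^2}=\|\phi\|_{L^2}$.

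Your diagnosis is right: the exterior constraint must stop mass from escaping to infinity. What is missing is an actual use of it. One way to do this:
\begin{enumerate}
\item Set $\psi:=\mathcal{L}u$. It is supported in $\overline{\Omega}$, so $\hat\psi$ is smooth with $|\partial^\alpha\hat\psi|\le C_\alpha\|\psi\|_{L^2}$, and $\hat u=\hat\psi/(\hat\gamma-1)$.
\item Since $1-\hat\gamma\sim|\xi|^2$ near $0$, the condition $\hat u\in L^2$ forces all Taylor coefficients of $\hat\psi$ at $0$ of order below $k$ to vanish, where $k\ge 0$ is the least integer with $k>2-n/2$. Hence $|\hat\psi(\xi)|\le C|\xi|^k\|\psi\|_{L^2}$, which bounds the low frequencies of $u$ by $\|\psi\|_{L^2}$.
\item Strict positivity of $\gamma$ and the Riemann--Lebesgue lemma give $1-\hat\gamma\ge c_\delta>0$ for $|\xi|\ge\delta$, which handles the high frequencies.
\item Together these give $\|u\|^2_{L^2}\le C\|\psi\|^2_{L^2}$, whereas $B_N[u,u]=(2\pi)^{-n}\int|\hat\psi|^2/(1-\hat\gamma)\,d\xi\ge\frac12\|\psi\|^2_{L^2}$.
\end{enumerate}
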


This result and its proof resemble that of \cite[Proposition 1]{mengesha2013analysis} in the context of scalar peridynamic models.

\begin{proof}
To reach a contradiction, assume that there exists a sequence $\{ u_k\}$ in $X^1_N\subset L^2(\R^n)$ with the property that $\| u_k\|_{L^2(\R^n)} =1$ and that as $k \rightarrow \infty$,
\[ | B_N[u_k,u_k] |= \frac{1}{2} \int_{\Om} \int_{\Om} (u_k(y) -u_k(x))^2 \gamma(x,y) \;dy\; dx \rightarrow 0.\]
We want to prove that $\|u_k \|_{L^2(\R^n)} \rightarrow 0 $ as $k \rightarrow \infty$.

First, since the sequence is bounded in $L^2(\R^n)$, it has a weakly convergent subsequence, which we again label
as $\{ u_k\}$. Then, $u_k \rightharpoonup \bar{u}$, for some $\bar{u} \in L^2(\R^n)$. 

In what follows we will use the operator $\mathcal{L} : L^2(\R^n) \rightarrow L^2(\R^n)$ given by
\[ \mathcal{L} \phi \ = \ \int_{\R^n} (\phi(y) - \phi(x))\gamma(x,y) \; dy,\]
and the functionals $J_k:  L^2(\R^n) \rightarrow \R$, which depend on a fixed $u_k \in X^1_N$, defined via
\[ J_k(\phi) = \int_{\R^n} \mathcal{L}( \phi) u_k \;dx.\]

Since the kernel $\gamma$ is exponentially localized, the operator $\mathcal{L}:  L^2(\R^n) \rightarrow L^2(\R^n)$ is bounded. Therefore, 
\[ J_k(\phi) = \int_{\R^n} \mathcal{L}(\phi) u_k \;dx \longrightarrow  \int_{\R^n} \mathcal{L}(\phi) \bar{u} \;dx, \]
%since $\mathcal{L} \phi \mid_{\Om} \in L^2(\Om)$.
At the same time, by the nonlocal form of Green's identity \eqref{Eq: NLGreen},
\[
%J_k(\phi) = &\int_\Omega \mathcal{L}(\phi) u_k\;dx = \int_\R \int_\R (\phi(y) - \phi(x)) \gamma(x,y) \;dy\; u_k(x) \;dx,\\
J_k(\phi) \ = \ -\frac{1}{2}  \int_{\R^n} \int_{\R^n} (\phi(y) - \phi(x)) \gamma(x,y) (u_k(y) - u_k(x)) \;dy\;dx,
\]
for all $\phi \in  L^2(\R^n)$, and consequently  we have that $J_k (\phi)$ converges to
%\[ J_k(\phi) \longrightarrow 
\[-\frac{1}{2}  \int_{\R^n} \int_{\R^n} (\phi(y) - \phi(x)) \gamma(x,y) (\bar{u}(y) - \bar{u}(x)) \;dy\;dx,\]
 as $k \rightarrow \infty$.
 
Our first goal is to show that this last expression is equal to zero for all $\phi \in L^2(\R^n)$.
 Using Cauchy-Schwarz, we obtain
\[ |J_k(\phi)|  \ < \ \frac{1}{2} \left( \iint_{\R^n \times \R^n} (u_k(y) - u_k(x))^2 \gamma(x,y) \;dy\;dx \right)^{1/2}
\left( \iint_{\R^n \times \R^n} (\phi(y) - \phi(x))^2 \gamma(x,y) \;dy\;dx \right)^{1/2},
\]
and we notice that the second integral is bounded in $k$ since it is equal to $B_N[\phi, \phi]$. 
On the other hand, the first integral is equal to $B_N[u_k,u_k]$, which converges to zero thanks to our
assumption on the sequence $\{ u_k\}$. Therefore, $J_k(\phi) \rightarrow 0$.

Now, letting $\phi = \bar{u}$ and keeping in mind that $J_k(\bar{u}) \rightarrow -\frac{1}{2} B_N[\bar{u},\bar{u}]$,
 this last result implies that
\[ B_N[\bar{u},\bar{u}] = \iint_{\Om \times \Om}  (\bar{u}(y) - \bar{u}(x))^2 \gamma(x,y)\;dy \;dx = 0\]
Then, since the kernel $\gamma>0$ we also obtain that $\bar{u}$ is constant in $\R^n$. Since constants are excluded from  
$L^2(\R^n)$, we must have that $\overline{u} = 0$ on $\R^n$.

The last thing we need to show is that the sequence $\{ u_k\}$ converges strongly to $\bar{u}$, and thus arrive at our desired contradiction.

Since for fixed $x$, $f(y):= \gamma(x-y)  \in L^2(\R^n)$ and since the sequence $u_k \rightharpoonup 0$ in $L^2(\R^n)$, it follows that $\gamma \ast u_k(x) \rightarrow 0 $ point-wise and then also in $L^2(\R^n)$. 
As a result,

\begin{align*}
J_k(u_k)\ = \ & \iint_{\R^n \times \R^n} (u_k(y) -u_k(x)) \gamma(x,y) \;dy \; u_k(x) \;dx\\
J_k(u_k) \ = \ & \int_{\R^n} u_k ( \gamma \ast u_k) \;dx - \int_{\R^n} |u_k|^2 \int_{\R^n} \gamma(x,y) \;dy \;dx\\
J_k(u_k) \ = \ &  \int_{\R^n} u_k ( \gamma \ast u_k) \;dx  - \Ga \| u_k\|^2_{L^2(\R^n)}.
\end{align*} 
Taking the limit as $k \rightarrow \infty$
\[   \Ga \| u_k \|^2_{L^2(\R^n)} =|J_k(u_k)| \longrightarrow 0, \]
completing the proof.
%\JMS{I think for this step it's not important that $\La > 0$, just that $\La \neq 0$. Which may be relevant for sign-changing case}
\end{proof}

To prove that the norms $X^1_N$ and $L^2(\Omega)$ are equivalent one first shows that the following exterior problem has a unique solution. This same result is shown in \cite{olena2021, cappanera2024analysis}.  

{\bf Exterior Problem:} Let $g\in L^2(\Omega)$ and consider
\begin{equation}\label{e:exterior}
\begin{split}
\mathcal{L} u \ & = \ 0 \qquad x \in \Omega^c\\
u \ & = \ g \qquad x \in \Omega.
\end{split}
\end{equation}
Define the space 
\[Y_N = \{ u \in L^2(\R^n) \mid \quad B_N[u,u]< \infty, \quad u(x) = 0 \quad x\in \Omega\}\]
with norm given by $\|u\|_{Y_N} = \| u\|_{L^2(\Omega^c)} + |u|_B$ where $|u|^2_B = B_N[u,u]$. 
The weak formulation of the exterior problem then reads
\begin{equation}\label{e:weak_exterior}
B_N[\hat{u},v] \ = \ (\mathcal{L} \tilde{g}, v)_{L^2(\Omega^c)} \quad \forall v \in Y_N.
\end{equation}
where $\hat{u} = u-\tilde{g}$ and $\tilde{g}$ is the extension of $g$ by zero to $\R^n$.

\begin{lemma}\label{l:exteriorproblem}
Let $g \in L^2(\Omega)$ and define $\tilde{g}$ as its extension by zero to $\R^n$. Then, the nonlocal Dirichlet problem \eqref{e:exterior} has a unique weak solution $u \in L^2(\Omega)$, given by $u = \hat{u} + g$, where $\hat{u} \in Y_N$ satisfies \eqref{e:weak_exterior}. Moreover, there is a positive constant $C$, such that
\[ \| u\|_{L^2(\Omega^c)} \ \leq \ C \|g\|_{L^2(\Omega)}.\]
\end{lemma}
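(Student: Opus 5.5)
The plan is to treat the weak formulation \eqref{e:weak_exterior} with the Lax--Milgram theorem on the Hilbert space $Y_N$. The inner product is $(u,v)_{Y_N} = (u,v)_{L^2(\Omega^c)} + B_N[u,v]$, which induces a norm equivalent to $\|\cdot\|_{Y_N}$.

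First I will check that $Y_N$ is complete. Take a Cauchy sequence; it converges in $L^2(\R^n)$ (the functions vanish on $\Omega$). By Fatou's lemma applied to $\iint (u_k(y)-u_k(x))^2\gamma\,dy\,dx$ along an a.e.\ convergent subsequence, the limit stays in $Y_N$. Positivity of $\gamma$ is used here; this is the setting of Lemma \ref{l:equivalent_norms}.

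Next I verify the two Lax--Milgram hypotheses.

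\emph{Boundedness.} We have $|B_N[u,v]| \le |u|_B |v|_B$ by Cauchy--Schwarz, since $\gamma>0$.

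\emph{Coercivity.} This is the key step. Since $v=0$ on $\Omega$, we have $\|v\|_{L^2(\Omega^c)} = \|v\|_{L^2(\R^n)}$. The Poincar\'e inequality of Lemma \ref{l:Poincare} (equivalently Lemma \ref{Lem: AppendixPoincare}) then gives $\|v\|^2_{L^2(\R^n)} \le \beta B_N[v,v]$. Hence $B_N[v,v] \ge (1+\beta)^{-1}\|v\|^2_{Y_N}$, up to the usual constant from squaring a sum.

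There is a caveat: that lemma is stated on $X^1_N$, not on $Y_N$. I expect this to be the main obstacle. I would re-run its contradiction argument verbatim on $Y_N$. Normalize $\|u_k\|_{L^2}=1$ with $B_N[u_k,u_k]\to 0$. Then the weak limit $\bar u$ satisfies $B_N[\bar u,\bar u]=0$, so $\bar u$ is constant. Because $\bar u=0$ on $\Omega$, it follows that $\bar u\equiv 0$; this is actually cleaner than in the Neumann case. Finally, the identity $J_k(u_k) = (\gamma*u_k,u_k) - \Gamma\|u_k\|^2$ forces $\|u_k\|\to 0$, which is a contradiction. The one point needing care is strong convergence $\gamma*u_k\to 0$ in $L^2$. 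I would handle it by splitting $\gamma$ into a compactly supported part and a part with small $L^1$ tail, using that $u_k$ is weakly null.

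For the right-hand side, $v\mapsto(\mathcal{L}\tilde g, v)_{L^2(\Omega^c)}$ is a bounded functional. Indeed, $\mathcal{L}$ is bounded on $L^2(\R^n)$ with norm at most $2\|\gamma\|_{L^1}$ (Young's inequality). So
\[
|(\mathcal{L}\tilde g,v)_{L^2(\Omega^c)}| \le 2\|g\|_{L^2(\Omega)}\|v\|_{L^2(\Omega^c)} \le 2\|g\|_{L^2(\Omega)}\|v\|_{Y_N}.
\]
Lax--Milgram then yields a unique $\hat u\in Y_N$ with $\|\hat u\|_{Y_N}\le C\|g\|_{L^2(\Omega)}$. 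Setting $u=\hat u+\tilde g$ gives $u=g$ on $\Omega$ and $\|u\|_{L^2(\Omega^c)} = \|\hat u\|_{L^2(\Omega^c)}\le C\|g\|_{L^2(\Omega)}$.

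It remains to connect the weak formulation to the equation. I interpret \eqref{e:weak_exterior} via the nonlocal Green identity \eqref{Eq: NLGreen}: for $v$ supported in $\Omega^c$ it reads $(-\mathcal{L}u, v)_{L^2(\Omega^c)}=0$. Testing with all such $v\in Y_N$, which include $L^2(\Omega^c)$ functions extended by zero since $B_N[v,v]\le 2\|\gamma\|_{L^1}\|v\|^2_{L^2}$, gives $\mathcal{L}u=0$ a.e.\ on $\Omega^c$.

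Uniqueness follows by the same argument: the difference of two solutions is an element $w\in Y_N$ with $B_N[w,v]=0$ for all $v$, and coercivity gives $w=0$.
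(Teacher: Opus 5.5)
Your plan of applying Lax--Milgram to \eqref{e:weak_exterior} on $Y_N$ is the route the paper points to via \cite[Lemma 3.1]{cappanera2024analysis}. However, the coercivity step fails, and this is not a technical issue. The step that breaks is ``$\gamma*u_k\to0$ strongly in $L^2$.'' On the unbounded set $\Omega^c$, $u_k\rightharpoonup0$ gives at best $\gamma*u_k\to0$ in $L^2_{\mathrm{loc}}$, because mass can drift to infinity or spread out. Convolution by a nonzero $L^1$ kernel is a Fourier multiplier, and such an operator is never compact on $L^2(\R^n)$. Splitting $\gamma$ into a compactly supported part plus a small tail controls the tail of $\gamma$, not this loss of compactness. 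The same unjustified step appears in the proof of Lemma \ref{Lem: AppendixPoincare}, so rerunning that argument verbatim inherits the problem. In fact, the Poincar\'e inequality you need is false on $Y_N$. Fix $0\neq\phi\in C_c^\infty(B_1)$ and set $v_R(x)=R^{-n/2}\phi((x-x_R)/R)$ with $|x_R|>R+\sup_{x\in\Omega}|x|$. Then $v_R\in Y_N$ and $\|v_R\|_{L^2(\Omega^c)}=\|\phi\|_{L^2}$. Using $\|v(\cdot+z)-v\|_{L^2}\le|z|\,\|\nabla v\|_{L^2}$, you get
\[
B_N[v_R,v_R] \ \le \ \frac12\Big(\int_{\R^n}|z|^2\gamma(z)\,dz\Big)\|\nabla v_R\|^2_{L^2} \ = \ \frac{1}{2R^2}\Big(\int_{\R^n}|z|^2\gamma(z)\,dz\Big)\|\nabla \phi\|^2_{L^2} \ \longrightarrow \ 0 .
\]
So no $\beta$ with $\|v\|^2_{L^2(\Omega^c)}\le\beta B_N[v,v]$ exists on $Y_N$. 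Your existence bound and your uniqueness argument both depend on that coercivity, so both collapse.

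This cannot be patched within $Y_N$, because the conclusion itself fails for kernels of unbounded support. Take $n=1$, $\Omega=(-1,1)$, and $\gamma(z)=\frac12e^{-|z|}$, the kernel used in the numerical Cases 1 and 5. Since $(1-\partial_x^2)(\gamma*u)=u$, the relation $\gamma*u=u$ on $(1,\infty)$ forces $(\gamma*u)''=0$ there. Hence $u$ is affine on $(1,\infty)$, and it must vanish there if $u\in L^2$; the same holds on $(-\infty,-1)$. Then $u=\tilde g$, and $\gamma*\tilde g=0$ on $\Omega^c$ requires $\int_{-1}^1e^{\pm y}g(y)\,dy=0$. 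For $g\equiv1$ this fails. The function $u\equiv1$ solves the exterior problem, but no function with $u\in L^2(\Omega^c)$ does. Consequently \eqref{e:weak_exterior} has no solution $\hat u\in Y_N$, and the estimate $\|u\|_{L^2(\Omega^c)}\le C\|g\|_{L^2(\Omega)}$ cannot hold. An argument of your type needs compactness, for example a compactly supported kernel with the exterior condition imposed only on a bounded interaction region around $\Omega$; the cited reference assumes compactly supported kernels. For kernels with unbounded support, one would have to enlarge the solution space to allow nonzero behaviour at infinity, at the cost of the $L^2(\Omega^c)$ bound.
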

A proof of this result is given in \cite[Lemma 3.1]{cappanera2024analysis}  and is a consequence of the Lax-Milgram theorem.
The next result is a consequence of the Poincar{\'e} inequality \eqref{e:poincare_2} and the above lemma.
%\JMS{I think we just state the lemma once, probably in the main body of the text?}
\begin{lemma}\label{l:equivalent_normsN}
There exist positive constants $c$ and $C$ such that for any $u \in X^0_N$,
\[ c\|u\|_{L^2(\Omega)} \ \leq \ \| u\|_{X^0_N} \ \leq \  C\|u\|_{L^2(\Omega)}.\]
\end{lemma}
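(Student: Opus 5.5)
The plan is to prove the two inequalities separately, working with the norm $\|u\|_{X^0_N} = \|u\|_{L^2(\R^n)} + |u|_{B_N}$, where $|u|^2_{B_N} = B_N[u,u]$. Throughout, $u\in X^0_N$ means $u\in L^2(\R^n)$ with $\mathcal{L}u = 0$ on $\Omega^c$.

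\emph{Lower bound.} This step is immediate. Since $\|u\|_{L^2(\Omega)} \leq \|u\|_{L^2(\R^n)} \leq \|u\|_{X^0_N}$, we may take $c=1$.

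\emph{Upper bound.} First split $\|u\|^2_{L^2(\R^n)} = \|u\|^2_{L^2(\Omega)} + \|u\|^2_{L^2(\Omega^c)}$. Because $\mathcal{L}u=0$ on $\Omega^c$, the function $u$ is the solution of the exterior problem \eqref{e:exterior} with datum $g = u|_\Omega \in L^2(\Omega)$. By the uniqueness in Lemma \ref{l:exteriorproblem}, $u$ coincides with the Lax--Milgram solution $\hat u + \tilde g$, and the estimate of that lemma gives
\[ \|u\|_{L^2(\Omega^c)} \leq C\|u\|_{L^2(\Omega)}. \]
Here one point needs checking: an arbitrary $u\in X^0_N$ must satisfy the weak formulation \eqref{e:weak_exterior}, so that uniqueness applies. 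For this, I would test $\mathcal{L}u=0$ on $\Omega^c$ against $v\in Y_N$ and use the nonlocal Green identity \eqref{Eq: NLGreen}. Since $v=0$ on $\Omega$, this yields $B_N[u,v]=0$, equivalently $B_N[u-\tilde g,v] = -B_N[\tilde g,v] = (\mathcal{L}\tilde g,v)_{L^2(\Omega^c)}$, which is exactly \eqref{e:weak_exterior}. Hence $\|u\|_{L^2(\R^n)} \leq (1+C)\|u\|_{L^2(\Omega)}$.

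Next, bound the seminorm. Since $\gamma\in L^1$, Young's inequality gives $\|\gamma*u\|_{L^2}\leq \|\gamma\|_{L^1}\|u\|_{L^2}$. Expanding the square in the definition of $B_N$ then yields
\[ B_N[u,u] = \Gamma\|u\|^2_{L^2(\R^n)} - (\gamma*u,u)_{L^2(\R^n)} \leq 2\|\gamma\|_{L^1}\|u\|^2_{L^2(\R^n)}. \]
Combining this with the previous step, $|u|_{B_N}\leq C'\|u\|_{L^2(\Omega)}$, and adding the two bounds proves the upper inequality.

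The main obstacle is the identification step: an abstract $u\in X^0_N$ must be shown to satisfy \eqref{e:weak_exterior}, and the pairing $(\mathcal{L}u,v)_{L^2(\Omega^c)}$ must make sense for every $v\in Y_N$. This pairing is fine because $\mathcal{L}$ is bounded on $L^2(\R^n)$, and $Y_N\subset L^2(\R^n)$ requires $\|v\|_{L^2(\Omega^c)}<\infty$, which the definition of $Y_N$ includes. The Poincar\'e inequality \eqref{e:poincare_2} is not needed for the inequalities as stated. It would be used only if one wanted to drop the $L^2(\R^n)$ term from the norm, via $\|u\|^2_{L^2(\R^n)}\leq\beta B_N[u,u]$.
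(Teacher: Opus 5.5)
Your proof is correct, and its key input is the same as in the paper's one-line justification: the exterior estimate of Lemma \ref{l:exteriorproblem}. You apply it only after checking, a step the paper leaves implicit, that every $u\in X^0_N$ is the weak exterior solution for its own trace $g=u|_\Omega$.

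Where you differ from the paper is in dropping the Poincar\'e inequality, which the paper also cites. You are right that it is not needed for the lower bound, since $\|u\|_{L^2(\R^n)}$ is already part of $\|u\|_{X^0_N}$. Your closing remark undersells it, though: on its own it gives the upper bound with no exterior problem at all. Because $\mathcal{L}u=0$ on $\Omega^c$,
\[
\|u\|^2_{L^2(\R^n)} \le \beta B_N[u,u] = \beta(-\mathcal{L}u,u)_{L^2(\Omega)} \le \beta\|\mathcal{L}\|\,\|u\|_{L^2(\R^n)}\|u\|_{L^2(\Omega)},
\]
hence $\|u\|_{L^2(\R^n)}\le\beta\|\mathcal{L}\|\,\|u\|_{L^2(\Omega)}$. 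Your Young-inequality bound on $|u|_{B_N}$ then finishes the proof.

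So either ingredient suffices by itself. Your route needs only Lemma \ref{l:exteriorproblem}. The alternative needs only a Poincar\'e inequality valid on all of $X^0_N$, namely Lemma \ref{l:Poincare}. Note that the appendix version \eqref{e:poincare_2}, which the paper actually cites here, is stated only on $X^1_N$; your route avoids that mismatch.
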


\bibliographystyle{plain} % We choose the "plain" reference style, can change later
\bibliography{refs}
	
\end{document}